\newtheorem{theorem}{Theorem}
\newtheorem{lemma}{Lemma}
\newtheorem{proposition}{Proposition}
\newtheorem{corollary}{Corollary}
\newtheorem{remark}{Remark}
\numberwithin{equation}{section}
\numberwithin{theorem}{section}
\numberwithin{lemma}{section}
\numberwithin{proposition}{section}
\numberwithin{corollary}{section}
\numberwithin{remark}{section}
\begin{document}
\title{Regularity properties of a generalized Oseen evolution operator 
in exterior domains, with applications to the Navier-Stokes initial value problem}
\author{Yosuke Asami and Toshiaki Hishida \\
Graduate School of Mathematics \\
Nagoya University \\
Nagoya 464-8602, Japan \\
\texttt{yosuke.asami.e6@math.nagoya-u.ac.jp} \\
\texttt{hishida@math.nagoya-u.ac.jp} \\
}
\date{}
\maketitle
\begin{abstract}
Consider a generalized Oseen evolution operator in 3D exterior domains, that is generated by a non-autonomous linearized system
arising from time-dependent rigid motions.
This was found by Hansel and Rhandi, and then the theory was developed by the second author, however,
desired regularity properties such as estimate of the temporal derivative as well as the H\"older estimate have remained open.
The present paper provides us with those properties together with weighted estimates of the evolution operator.
The results are then applied to the Navier-Stokes initial value problem, so that a new theorem on existence of a unique strong 
$L^q$-solution locally in time is proved.

\noindent
MSC: 35Q30, 76D05

\noindent
Keywords: Navier-Stokes flow, exterior domain, evolution operator, rigid motion. 
\end{abstract}

\section{Introduction}
\label{intro}

Let us consider the motion of a viscous incompressible fluid filling the exterior of an obstacle in $\mathbb R^3$,
where the obstacle is a rigid body whose motion is prescribed and can be time-dependent.
We prefer to take a frame attached to the moving obstacle in order that the problem is reduced to the one in a fixed exterior domain $D$,
whose boundary $\partial D$ is assumed to be of class $C^{1,1}$.
The resultant set of equations reads, see \cite{Ga02},
\begin{equation}
\begin{split}
&\left.
\begin{array}{rl}
\partial_tu+u\cdot\nabla u&=\Delta u+(\eta+\omega\times x)\cdot\nabla u-\omega\times u-\nabla p \\
\mbox{div $u$}&=0 \\
\end{array}
\right\} \;\;\mbox{in $D\times (0,\infty)$},  \\
&\quad u|_{\partial D}=\eta+\omega\times x, \qquad
\lim_{|x|\to\infty}u=0, \qquad
u(\cdot,0)=u_0,
\end{split}
\label{ns}
\end{equation}
where $u=u(x,t)\in\mathbb R^3$ and $p=p(x,t)\in \mathbb R$ denote the velocity and pressure
of the fluid, whereas $\eta=\eta(t)\in \mathbb R^3$ and $\omega=\omega(t)\in\mathbb R^3$ are, respectively, translational and angular
velocities of the body, and $u_0$ stands for a given initial velocity.

The Navier-Stokes initial value problem \eqref{ns} has attracted much attention of several authors 
since the work \cite{Bor92} on the Leray-Hopf weak solutions, because the drift operator 
$(\omega\times x)\cdot\nabla$ arising from rotation of the body causes a hyperbolic aspect and makes the problem challenging.
In fact, the semigroup generated by the linearized operator with a constant rigid motion 
is no longer analytic (\cite{FN07, Hi99}) unless the rotation is absent, nevertheless it enjoys remarkable smoothing effect
(\cite{GHH06, Hi99}), see 
\cite{Hi13} for the details and further references.
In this paragraph, let us focus on the autonomous system with purely rotating body
(constant $\omega\in\mathbb R^3\setminus \{0\}$ and $\eta=0$). 
A unique mild solution 
was constructed locally in time first by the second author \cite{Hi99} in $L^2$ and then by Geissert, Heck and Hieber \cite{GHH06} in $L^q$
via the semigroup approach, while Galdi and Silvestre \cite{GS05} adopted the other method using the Galerkin approximation
to show the existence of a unique local solution in $L^2$ 
when the initial velocity satisfies
\begin{equation}
u_0\in W^{2,2}(D), \quad
(\omega\times x)\cdot\nabla u_0\in L^2(D), \quad 
u_0|_{\partial D}=\omega\times x, \quad 
\mbox{div $u_0$}=0.
\label{ass-GS}
\end{equation}
The latter solution is a strong one with $\partial_tu$
belonging to $L^\infty$ in time with values in $L^2(D)$.
In \cite{GS05} it was also proved that if \eqref{ass-GS} is fulfilled for $u_0$ replaced by $u_0-u_s$ with 
$u_s$ being a steady solution whose magnitude is controlled by $|\omega|$,
then problem \eqref{ns} admits a global strong solution that
tends to $u_s$ as $t\to\infty$ under smallness conditions on $\|u_0-u_s\|_{W^{1,2}(D)}$ and $|\omega|$,
yielding the stability of $u_s$.
Later on, the stability of steady solutions was studied within the framework of $L^q$ by Shibata and the second
author \cite{HiShi} under the smallness on $\|u_0-u_s\|_{L^{3,\infty}(D)}$ as well as $|\omega|$
with use of $L^q$-$L^r$ decay estimates of the semigroup, where $L^{3,\infty}$ denotes the weak-$L^3$ space (one of the
Lorentz spaces).

When taking into account not only the rotation but also the translation, where constant vectors $\eta$ and $\omega$ are parallel to each other,
$L^q$-$L^r$ estimates as well as generation of the semigroup for the linearized autonomous problem 
were established by Shibata \cite{Shi08, Shi10}.
The similar result for the case of 
a Lipschitz boundary $\partial D$ has been recently 
deduced by Takahashi and Watanabe \cite{TW}, in which one needs a restriction on the summability exponent
dependently on the Lipschitz character.
The latter result seems likely even for the non-autonomous case since the argument is based 
mainly on \cite{Hi18, Hi20} below as well as \cite{GHH06}.

For the non-autonomous regime with time-dependent rigid motions $\eta(t)+\omega(t)\times x$, which 
we are going to address in this paper,
the existence of a unique mild solution locally in time to \eqref{ns} 
was proved by Hansel and Rhandi \cite{HR14} under the assumption
\begin{equation}
\eta,\, \omega\in C^1([0,\infty);\, \mathbb R^3)
\label{ass0-rigid}
\end{equation}
when the initial velocity fulfills
\begin{equation}
u_0\in L^q(D)\;\mbox{with $q\in [3,\infty)$}, \quad 
\nu\cdot \big(u_0-\eta(0)-\omega(0)\times x\big)|_{\partial D}=0, \quad \mbox{div $u_0$}=0,
\label{ass-HR}
\end{equation}
where $\nu$ denotes the outer unit normal
to $\partial D$.
The assumption \eqref{ass0-rigid} is needed only for the well-definedness of a forcing term arising from
a lift of the rigid motion at $\partial D$ as a continuous function in $t$, see \eqref{lift} and \eqref{lift-force} in the next section.
Under less regularity
\begin{equation}
\eta,\;\omega\in C^{0,\theta}_{\rm loc}([0,\infty);\,\mathbb R^3)
\label{ass1-rigid}
\end{equation}
for some $\theta\in (0,1]$,
the essential contribution of \cite{HR14} is to construct the evolution operator $\{T(t,s)\}_{t\geq s\geq 0}$ 
in $L^q$, which provides a solution operator $f\mapsto u(t)=T(t,s)f$ to the initial value problem for the linearized system
\begin{equation}
\begin{split}
&\partial_tu+{\mathcal L}(t)u
+\nabla p=0, \qquad \mbox{div $u$}=0 \quad\mbox{in $D\times (s,\infty)$}, \\
&u|_{\partial D}=0, \qquad \lim_{|x|\to\infty}u=0, \qquad  u(\cdot,s)=f,
\end{split}
\label{linear}
\end{equation}
in their own way without using any existing theory found in \cite{Fri, Ta, Ya}
and to deduce $L^q$-$L^r$ smoothing estimates of $T(t,s)$, where
\begin{equation}
{\mathcal L}(t)=-\Delta -(\eta(t)+\omega(t)\times x)\cdot\nabla+\omega(t)\times.
\label{pre-proj-0}
\end{equation}
The second author \cite{Hi18, Hi20} established $L^q$-$L^r$ decay estimates of this evolution operator 
when assuming
\begin{equation}
\eta,\;\omega\in C^{0,\theta}([0,\infty);\,\mathbb R^3)\cap L^\infty(0,\infty;\,\mathbb R^3)
\label{ass2-rigid}
\end{equation}
for some $\theta\in (0,1]$.
Estimates obtained there were employed to show the existence of time-periodic solutions
(with $\eta$ and $\omega$ being periodic as well as regular to some extent) and their stability, see \cite{Hi22, Huy23}.

All those results for \eqref{ns} mentioned above, except \cite{GS05} in $L^2$, are concerned with mild solutions, that is, 
solutions to the integral equation described by use of the evolution operator or semigroup.
The difficulty of obtaining a strong solution stems from lack of analyticity of the semigroup generated by the corresponding autonomous system, as mentioned before,
and analysis is even more involved on account of the non-autonomous character.
In fact, we do not know whether or not the solution obtained by
\cite{HR14}
becomes a strong one even if assuming more regularity of $\eta,\, \omega$ than \eqref{ass0-rigid}.

The objective of this paper is to provide a strong $L^q$-solution to \eqref{ns}
in the sense that it is of class $C^1$ in time with values in $L^q(D)$
under the assumption
\begin{equation} 
\eta,\;\omega\in C^{1,\vartheta}_{\rm loc}((0,\infty);\,\mathbb R^3), \qquad
|\eta^\prime(t)|+|\omega^\prime(t)|=O(t^{-\gamma})\quad\mbox{as $t\to 0$}
\label{ass3-rigid}
\end{equation}
for some $\vartheta\in (0,1]$ and $\gamma\in [0,1)$, yielding \eqref{ass1-rigid} with $\theta=1-\gamma$
as well as the well-definedness of $\eta,\,\omega$ up to $t=0$,
see \eqref{imp-hoe} and Remark \ref{rem-ns} below,
when the initial velocity satisfies
\begin{equation}
\rho u_0\in L^q(D)\;\mbox{with $q\in (3,\infty)$}, \quad 
\nu\cdot \big(u_0-\eta(0)-\omega(0)\times x\big)|_{\partial D}=0, \quad \mbox{div $u_0$}=0,
\label{ass-IC}
\end{equation}
where $\rho$ stands for the weight function $\rho(x)=1+|x|$, which is related to the coefficient of the drift operator 
$(\omega\times x)\cdot\nabla$.
Our condition \eqref{ass-IC} on $u_0$ should be compared with \eqref{ass-HR} for the mild $L^q$-solution and with \eqref{ass-GS}
for the strong $L^2$-solution with constant $\omega$ (and $\eta=0$).
We emphasize that the result is new even for the autonomous case.
The critical case $q=3$ is unfortunately missing in \eqref{ass-IC}, and this is because the boundedness 
$\|\rho^2Pg\|_q\leq C\|\rho^2g\|_q$ of the Fujita-Kato projection $P$ associated with the Helmholtz decomposition holds true 
in the weighted $L^q$ space if and only if $q\in (3,\infty)$; indeed, this 
comes from the Muckenhoupt theory of the solenoidal $L^q$
space, see Farwig and Sohr \cite{FS97}, and the recent contribution by Tomoki Takahashi \cite{T24}.

To the end described above, a novelty is
to develop the regularity theory of the evolution operator $T(t,s)$
under the reasonable condition \eqref{ass1-rigid} on $\eta,\,\omega$ 
(indeed, the only stage we need  
\eqref{ass3-rigid} is to take care of the rigid motion at $\partial D$, see \eqref{ns}, in constructing a Navier-Stokes flow),
specifically, the H\"older estimate of $t\mapsto T(t,s)f$ and estimates of 
$\partial_tT(t,s)f$ as well as $\nabla^2T(t,s)f$ in $L^q(D)$, 
see \eqref{2nd-sm} for $f\in L^q(D)$, \eqref{str-est} and \eqref{hoe-est} when $\rho f\in L^q(D)$ with $\rho(x)=1+|x|$.
By \cite[(5.18)]{Hi20} we are already aware of 
\eqref{Y-Z} for $\partial_tT(t,s)f$ under the additional condition $\rho\nabla f\in L^q(D)$,
that is however too restrictive to study the nonlinear problem.
On the other hand, without any additional condition, it is possible to deduce a subtle
estimate \eqref{c1-weak} for $\partial_tT(t,s)f$ in $W^{-1,q}(D_R)$, 
where
$D_R=D\cap B_R$, see \cite[Proposition 5.1]{Hi20}; indeed this was useful in \cite{Hi20}, 
but it is weaker than desired.
A key ingredient of our analysis is to make use of weighted estimate, to be precise, estimate of
$\|\rho^\alpha \nabla^jT(t,s)f\|_q$ with $\alpha\in [0,3)$, $j\in \{0,1\}$ and $q\in (\frac{3}{3-\alpha},\infty)$,
see \eqref{wei-est1}.
What should be still done to discuss \eqref{ns} is to study the temporal derivative of the Duhamel term
\begin{equation}
v(t)=\int_s^t T(t,\sigma)g(\sigma)\,d\sigma
\label{duha-0}
\end{equation}
under suitable assumptions on the forcing term $g(t)$.
If the evolution operator were of parabolic type in the sense of Tanabe and Sobolevskii, 
then the theory would be standard, see \cite{Fri, Ta, Ya}; however, it is not the case, so that the issue is quite nontrivial.
The only clue is to look deep into the structure of the evolution operator due to \cite{HR14}, 
that is constructed by use of the evolution operator, $U(t,s)$, in the whole space $\mathbb R^3$ and the one, $V(t,s)$, in a bounded domain
$D_R$ near the body through a certain iteration procedure.
One is able to see from the representation of $\partial_tv(t)$, see \eqref{repre-C1-alt} below which is valid for $q\in (3/2,\infty)$, 
that the argument must be involved.
In view of the structure of this formula, analysis of the aforementioned evolution operator $U(t,s)$ in detail 
for the whole space problem should be an important step.

Although our linear theory is applied merely to the classical setting \eqref{ns} with a prescribed rigid motions in the present paper,
we could think of possibility of applications even to a fluid-structure interaction problem, see \cite{Ga02, Hi24} and the references therein,
where $\eta$ and $\omega$ to be determined through the interaction obey a system of ordinary differential equations
describing the balance for linear and angular momentum.
Since these equations involve $u$ and $p$ through boundary integrals over $\partial D$ being the force and torque
exerted by the fluid,
it turns out from the trace inequality that 
we do need estimates of $\partial_tu, \nabla^2 u$ and $\nabla p$ over a bounded domain $D_R$ near the body 
rather than $D$ itself.
Having this in mind, we deduce estimates of those derivatives of $T(t,s)f$ and $v(t)$ given by \eqref{duha-0}
in $L^q(D_R)$ as well in terms of less information of data, 
see \eqref{str-local-est} and \eqref{duha-str-loc-est}. 

Finally, let us remark that there is the other way to formulate the problem at the beginning (also might be the only way 
when there are several independent rigid bodies)
by use of a local transformation, which keeps the situation far from the body as it is, 
so that the drift term $(\omega\times x)\cdot\nabla u$ does not appear unlike our reduction \eqref{ns}. 
This nonlinear transformation was proposed by Cumsille and Tucsnak \cite{CT06}
to show the local existence of a unique strong solution in $L^2$ for the autonomous case.
Dintelmann, Geissert and Hieber \cite{DGH09} adopted the same transformation even for the case of time-dependent rigid motions
in order to discuss the local well-posedness
within the class of maximal regularity, that is, 
$L^p$ in time with values in $L^q(D)$.
Nevertheless, with this transformation, nontrivial basic flows such as a steady solution are lost and thus one cannot
proceed to the next stage of analysis, say, stability/attainability of those flows; indeed, this 
would be a disadvantage as long as the obstacle is a single rigid body.

The paper is organized as follows.
In section \ref{result}, after some preliminaries, we give our main theorems.
We collect in section \ref{sect-auxi} useful results on several auxiliary problems.
The whole space problem is studied in section \ref{sect-whole}.
Section \ref{sect-wei} is devoted to weighted estimates of the evolution operator. 
In section \ref{sect-strong} we discuss the temporal derivative $\partial_tT(t,s)$ as well as the spatial one $\nabla^2 T(t,s)$, so that
the evolution operator indeed provides a strong solution to the initial value problem \eqref{linear} for the linearized system 
when taking the initial velocity from the weighted space \eqref{sole-chara-wei} below with $\alpha=1$.
The H\"older estimates of the evolution operator are deduced in section \ref{sect-hoe}.
In section \ref{sect-duha} we study the inhomogeneous linear problem with forcing  and derive the regularity estimate of
the function \eqref{duha-0}. 
With complete linear theory at hand, in the final section, the main theorem on local existence of
strong $L^q$-solutions to \eqref{ns} is established.

\section{Main results}
\label{result}

Let us begin with introducing notation.
Given a domain $G\subset \mathbb R^3$, $q\in [1,\infty]$ and integer $k\geq 0$, the standard Lebesgue and Sobolev spaces
are denoted by $L^q(G)$ and $W^{k,q}(G)$.
We abbreviate the norm $\|\cdot\|_{q,G}=\|\cdot\|_{L^q(G)}$ and even $\|\cdot \|_q=\|\cdot\|_{q,D}$,
where $D$ is the exterior domain under consideration with $C^{1,1}$-boundary $\partial D$.
We assume that
\begin{equation}
\mathbb R^3\setminus D\subset B_1,
\label{obsta}
\end{equation}
where $B_R$ denotes the open ball centered at the origin with radius $R>0$.
We set $D_R=D\cap B_R$ for $R\geq 1$.
The class $C_0^\infty(G)$ comprises all $C^\infty$-functions with compact support in $G$ and the space $W^{k,q}_0(G)$
with integer $k\geq 0$ stands for the completion of $C_0^\infty(G)$ in $W^{k,q}(G)$. 
By $W^{-1,q}(G)$ we denote the dual space of $W^{1,q^\prime}_0(G)$, where $1/q^\prime+1/q=1$.
We adopt the same symbols for denoting scalar and vector function spaces as long as there is no confusion.
Let $X_1$ and $X_2$ be two Banach spaces.
Then ${\mathcal L}(X_1,X_2)$ stands for the Banach space consisting of all bounded linear operators from $X_1$ into $X_2$.
We simply write ${\mathcal L}(X_1)={\mathcal L}(X_1,X_1)$.
Several positive constants are denoted by $C$, which may change from line to line.

Let us introduce the solenoidal function spaces.
Let $G\subset \mathbb R^3$ be one of the following domains;
the exterior $C^{1,1}$-domain $D$ under consideration, a bounded domain $D_R$ for $R\geq 1$, and the whole space $\mathbb R^3$.
The class $C_{0,\sigma}^\infty(G)$ consists of all divergence-free vector fields being in $C_0^\infty(G)$.
Let $1<q<\infty$, and denote by $L^q_\sigma(G)$ the completion of $C_{0,\sigma}^\infty(G)$, then it is characterized as
\begin{equation}
L^q_\sigma(G)=\{u\in L^q(G);\; \mbox{div $u$}=0,\, \nu\cdot u|_{\partial G}=0\},
\label{sole-chara}
\end{equation}
where $\nu$ stands for the outer unit normal to $\partial G$ and $\nu\cdot u$ is understood in the sense of normal trace
on $\partial G$ (this boundary condition is absent when $G=\mathbb R^3$).
The space of $L^q$-vector fields admits the Helmholtz decomposition
\begin{equation}
L^q(G)=L^q_\sigma(G)\oplus \{\nabla p\in L^q(G);\; p\in L^q_{\rm loc}(\overline{G})\}
\label{helm}
\end{equation}
which was proved by Fujiwara and Morimoto \cite{FM}, Miyakawa \cite{Mi}, and Simader and Sohr \cite{SiSo}.
By $P_G=P_{G,q}: L^q(G)\to L^q_\sigma(G)$, we denote the Fujita-Kato bounded projection associated with the decomposition above.
Note the duality relation $(P_{G,q})^*=P_{G,q^\prime}$ as well as $L^q_\sigma(G)^*=L^{q^\prime}_\sigma(G)$, where
$1/q^\prime+1/q=1$.
We simply write $P=P_D$ for the exterior domain $D$ under consideration.
When $G=\mathbb R^3$, we see that $P_{\mathbb R^3}=I+{\mathcal R}\otimes {\mathcal R}$, where
${\mathcal R}=\nabla (-\Delta)^{-1/2}$ denotes the Riesz transform and $I$ stands for the identity operator.

We are in a position to introduce the generator which is associated with the non-autonomous linearized system \eqref{linear}.
Suppose that the rigid motion $\eta(t)+\omega(t)\times x$ fulfills \eqref{ass1-rigid} for some $\theta\in (0,1]$ as in \cite{HR14}, and set
\begin{equation}
\begin{split}
&|(\eta,\omega)|_{0,{\mathcal T}}:=\sup_{0\leq t\leq {\mathcal T}}\; (|\eta(t)|+|\omega(t)|), \\
&|(\eta,\omega)|_{\theta,{\mathcal T}}:=\sup_{0\leq s<t\leq {\mathcal T}}\frac{|\eta(t)-\eta(s)|+|\omega(t)-\omega(s)|}{(t-s)^\theta}, \\
\end{split}
\label{quan1}
\end{equation}
for ${\mathcal T}>0$.
Note that \eqref{ass3-rigid} is not needed until analyzing the nonlinear problem \eqref{ns}.
Let $1<q<\infty$. We define the family of linear operators $\{L(t)\}_{t\geq 0}$ on $L^q_\sigma(D)$ by
\begin{equation}
\begin{split}
D_q(L(t))&=\{u\in D_q(A);\; (\omega(t)\times x)\cdot\nabla u\in L^q(D)\}, \\
L(t)u&=P{\mathcal L}(t)u 
=Au-(\eta(t)+\omega(t)\times x)\cdot\nabla u+\omega(t)\times u,
\end{split}
\label{ig}
\end{equation}
where ${\mathcal L}(t)$ is the differential operator given by \eqref{pre-proj-0} and
$A$ denotes the Stokes operator on $L^q_\sigma(D)$, that is,
\begin{equation}
D_q(A)=L^q_\sigma(D)\cap W^{1,q}_0(D)\cap W^{2,q}(D), \qquad Au=-P\Delta u.
\label{stokes}
\end{equation}
In \eqref{ig} we have taken into account
\begin{equation}
P\big[(\eta+\omega\times x)\cdot\nabla u-\omega\times u\big]=(\eta+\omega\times x)\cdot\nabla u-\omega\times u
\label{proj-drift}
\end{equation}
and this is useful to show \eqref{str-local-est} and \eqref{Av-local} below. 
The reason why \eqref{proj-drift} holds true is that
\begin{equation}
\begin{split}
&\mbox{div $\big[(\eta+\omega\times x)\cdot\nabla u-\omega\times u\big]$}=(\eta+\omega\times x)\cdot\nabla\mbox{div $u$}=0, \\
&\nu\cdot\big[(\eta+\omega\times x)\cdot\nabla u-\omega\times u\big]\big|_{\partial D}=0,
\end{split}
\label{grund}
\end{equation}
see \eqref{sole-chara}--\eqref{helm}.
The latter of \eqref{grund} in our context was observed by Galdi and Silvestre \cite{GS05} when $q=2$.
Their finding is still valid even for $1<q<\infty$ since
the normal trace of the drift term vanishes, namely,
$\nu\cdot (\partial_ju)|_{\partial D}=0$ for every $j\in \{1,2,3\}$ and $u$ being in $L^q_\sigma(D)\cap W^{1,q}_0(D)$
that coincides with 
the completion of $C_{0,\sigma}^\infty(D)$ in $W^{1,q}(D)$.
As for the coincidence of those spaces, which the point here, see, for instance,
Galdi \cite[Theorem III.4.2]{Ga-b}; further references are found in \cite[Section III.7]{Ga-b}.
The initial value problem \eqref{linear} is now formulated as
\begin{equation}
\frac{du}{dt}+L(t)u=0, \quad t\in (s,\infty); \qquad u(s)=f
\label{evo}
\end{equation}
in $L^q_\sigma(D)$.

To overcome the difficulty that the domain $D_q(L(t))$ is time-dependent, the idea due to Hansel and Rhandi \cite{HR14}
is to introduce the regularity space
\begin{equation}
Y_q(G):=\{u\in L^q_\sigma(G)\cap W^{1,q}_0(G)\cap W^{2,q}(G);\; \rho\nabla u\in L^q(G)\}, \qquad G=D,\,\mathbb R^3, 
\label{Y}
\end{equation}
endowed with norm
$\|u\|_{Y_q(G)}=\|u\|_{q,G}+\|\rho \nabla u\|_{q,G}+\|\nabla^2u\|_{q,G}$.
The weight function $\rho$ is given by
\begin{equation}
\rho(x):=1+|x|
\label{weight}
\end{equation}
throughout this paper.
It is obvious that $Y_q(D)\subset D_q(L(t))$ for every $t\geq 0$.
The auxiliary space adopted in \cite{Hi20} is
\begin{equation}
Z_q(G):=\{u\in L^q_\sigma(G);\; \rho\nabla u\in L^q(G)\}, \qquad G=D,\, \mathbb R^3,
\label{Z}
\end{equation}
endowed with norm
$\|u\|_{Z_q(G)}=\|u\|_{q,G}+\|\rho\nabla u\|_{q,G}$,
where the homogeneous Dirichlet condition is not involved, while the similar space in \cite{HR14} is contained in $W^{1,q}_0(D)$.
This modification was indeed necessary in \cite[Section 5]{Hi20}.
The other similar space with the weight $\rho^\alpha$ to \eqref{Z} will be needed later, see \eqref{Z-ge},
where $\alpha\in [0,3)$.
Due to Hansel and Rhandi \cite{HR14}, along with a slight refinement by the second author \cite[Proposition 2.1]{Hi20},
we know the following.
\begin{proposition}
Suppose that $\eta$ and $\omega$ fulfill \eqref{ass1-rigid} for some $\theta\in (0,1]$.
Let $q\in (1,\infty)$.
The operator family $\{L(t)\}_{t\geq 0}$ given by \eqref{ig} generates an evolution operator
$\{T(t,s)\}_{t\geq s\geq 0}$ on $L^q_\sigma(D)$ such that $T(t,s)$ is a bounded operator from $L^q_\sigma(D)$
into itself with the semigroup property
\[
T(t,\tau)T(\tau,s)=T(t,s)\quad (t\geq \tau\geq s\geq 0); \qquad T(s,s)=I,
\]
in ${\mathcal L}(L^q_\sigma(D))$ with $I$ being the identity operator and that the map
\[
\{t\geq s\geq 0\}\ni (t,s)\mapsto T(t,s)f\in L^q_\sigma(D)
\]
is continuous for every $f\in L^q_\sigma(D)$.
Furthermore, we have the following properties.

\begin{enumerate}
\item
Let $j\in \{0,1\}$ and $r\in [q,\infty]$. 
For each ${\mathcal T}\in (0,\infty)$ and $m\in (0,\infty)$, there is a constant 
$C=C({\mathcal T},m,q,r,\theta,D)>0$ such that
\begin{equation}
\|\nabla^jT(t,s)f\|_r\leq C(t-s)^{-j/2-(3/q-3/r)/2}\|f\|_q
\label{sm}
\end{equation}
for all $(t,s)$ with $0\leq s<t\leq {\mathcal T}$ and $f\in L^q_\sigma(D)$ whenever
\begin{equation}
|(\eta,\omega)|_{0,{\mathcal T}}+|(\eta,\omega)|_{\theta,{\mathcal T}}\leq m,
\label{ever}
\end{equation}
see \eqref{quan1}.

\item
Let $q\in (\frac{3}{2},\infty)$. 
For every $f\in Z_q(D)$ we have
\begin{equation}
\begin{split}
&T(\cdot,s)f\in C^1((s,\infty);\, L^q_\sigma(D))\cap C((s,\infty);\, W^{2,q}(D)), \\
&T(t,s)f\in Y_q(D)\quad \forall\,t\in (s,\infty),  \quad
L(\cdot)T(\cdot,s)f\in C((s,\infty);\, L^q_\sigma(D)),
\end{split}
\label{st-cl}
\end{equation}
with
\begin{equation}
\partial_tT(t,s)f+L(t)T(t,s)f=0, \qquad t\in (s,\infty),
\label{str}
\end{equation}
in $L^q_\sigma(D)$. 

Let $\delta\in (0,\frac{1}{2q})$. 
For each ${\mathcal T}\in (0,\infty)$ and $m\in (0,\infty)$, there is a constant
$C=C({\mathcal T},m,q,\delta,\theta,D)>0$ such that 
\begin{equation}
\|\partial_tT(t,s)f\|_q+
\|T(t,s)f\|_{Y_q(D)}\leq C(t-s)^{-1+\delta}\|f\|_{Z_q(D)}
\label{Y-Z}
\end{equation}
for all $(t,s)$ with $0\leq s<t\leq {\mathcal T}$ and $f\in Z_q(D)$ whenever \eqref{ever} is satisfied.
Here, the spaces $Y_q(D)$ and $Z_q(D)$ are given by \eqref{Y} and \eqref{Z}, respectively.

\item
Fix $t>0$. For every $f\in Y_q(D)$, we have
\[
T(t,\cdot)f\in C^1([0,t];\, L^q_\sigma(D))
\]
with
\[
\partial_sT(t,s)f=T(t,s)L(s)f, \qquad s\in [0,t],
\]
in $L^q_\sigma(D)$.

\end{enumerate}
\label{so-far}
\end{proposition}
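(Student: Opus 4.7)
Because the autonomous semigroup associated with $-P\Delta-(\omega\times x)\cdot P\nabla+\omega\times$ fails to be analytic (the drift grows linearly at spatial infinity), no standard non-autonomous parabolic theory applies directly. The plan is therefore to follow the Hansel--Rhandi parametrix: first build the whole-space evolution operator $U(t,s)$ by passing to co-moving coordinates built from a rotation $Q(t)\in \mathrm{SO}(3)$ tied to $\omega(t)$ and a translation tied to $\int_0^t\eta$, after which the transformed system becomes a small perturbation of the heat equation with an explicit Gaussian-type kernel; second, on a bounded cut-off domain $D_R$ build $V(t,s)$ via the Tanabe--Sobolevskii construction applied to the analytic Stokes semigroup perturbed by the now-bounded H\"older-continuous term $-(\eta+\omega\times x)\cdot\nabla+\omega\times$.

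To combine the two into $T(t,s)$ on $D$, I would fix a radial cut-off $\varphi$ equal to $1$ outside $B_R$ and $0$ near the body and seek $T(t,s)f$ in the form $\varphi\, U(t,s)(Ef)+(1-\varphi)V(t,s)(f|_{D_R})+\mathbb{B}[\,\cdot\,]+w$, where $E$ is a bounded extension and $\mathbb{B}$ is a Bogovskii operator restoring the solenoidal condition. Substituting this ansatz into \eqref{evo} forces $w$ to satisfy a Volterra integral equation whose kernel contains the commutators $[\Delta,\varphi]$ and $[(\omega\times x)\cdot\nabla,\varphi]$ applied to $U$ and $V$. Because each commutator is spatially localized in $B_R\setminus B_{R-1}$ and gains a half power of $(t-s)$ through the smoothing of $U$ and $V$, the integral equation is contractive in small time, and iteration produces $T(t,s)$. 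Joint continuity and the semigroup identity $T(t,\tau)T(\tau,s)=T(t,s)$ follow from uniqueness of this construction, while the $L^q$--$L^r$ bound \eqref{sm} propagates from the Gaussian estimate on $U$ and the analytic estimate on $V$ through each iterate, yielding Part~(i).

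For Part~(ii), when $f\in Z_q(D)$ the drift term $(\omega(s)\times x)\cdot\nabla f$ already belongs to $L^q(D)$, so one may differentiate the parametrix termwise and control $\partial_tT(t,s)f$ together with $\nabla^2T(t,s)f$ and $\rho\nabla T(t,s)f$. The loss $(t-s)^{-1+\delta}$ with $\delta<1/(2q)$ in \eqref{Y-Z}, rather than the scale-predicted $(t-s)^{-1}$, arises because the Bogovskii corrector absorbs one power of $\rho$ through a weighted Hardy inequality and costs a factor $(t-s)^{\delta}$ in the iteration. For Part~(iii), with $f\in Y_q(D)\subset D_q(L(s))$, the semigroup identity rewrites the difference quotient as
\[
\frac{T(t,s+h)f-T(t,s)f}{h}=-T(t,s+h)\,\frac{T(s+h,s)f-f}{h}.
\]
Because $Y_q(D)\subset Z_q(D)$, Part~(ii) applied with initial time $s$ gives $(T(s+h,s)f-f)/h\to -L(s)f$ in $L^q_\sigma(D)$ as $h\to 0^+$, while the strong continuity of $T(t,\cdot)$ yields $T(t,s+h)\to T(t,s)$ in $\mathcal L(L^q_\sigma(D))$; a symmetric argument with $h<0$ supplies the left derivative. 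Continuity of $s\mapsto T(t,s)L(s)f$ then follows from the continuity of $s\mapsto L(s)f$ in $L^q_\sigma(D)$, which is guaranteed by \eqref{ass1-rigid} together with $f\in Y_q(D)$.

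The main obstacle I expect is the gluing step: one must tune the cut-off geometry and the Bogovskii correction so that the Volterra iteration closes despite the unbounded coefficient $(\omega(t)\times x)\cdot\nabla$ at spatial infinity, and one must extract the sharp exponent in \eqref{Y-Z} from this iteration rather than from any abstract semigroup theory. This difficulty is precisely what motivates the weighted estimates developed in the rest of the paper.
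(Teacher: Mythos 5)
Your overall architecture is the one the paper (following Hansel--Rhandi) actually uses: the explicit whole-space operator \eqref{rep-wh} obtained by a change of frame, the interior operator $V(t,s)$ from parabolic theory, a cut-off gluing with a Bogovskii correction, and the integral equation \eqref{ite-IE} solved by the iteration \eqref{iteration}. Three details of your implementation deserve correction. First, you feed $U$ and $V$ the raw data $Ef$ and $f|_{D_R}$; these are not solenoidal, so one must first pass to the Bogovskii-modified data $f_0,f_1$ of \eqref{modi} before the solenoidal evolution operators apply. Second, the convergence of the iteration is not a small-time contraction: the remainder $PK(\tau,s)f$ carries the integrable singularity $(\tau-s)^{-(1+1/q)/2}$, and the series converges on all of $[0,{\mathcal T}]$ by the iterated-convolution Lemma \ref{lem-ite}; this is also how \eqref{sm} propagates through the iterates. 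Third, your explanation of the loss $\delta$ in \eqref{Y-Z} is not the correct one: it originates from the fact that $PK(t,s)f$ and the cut-off data fail the homogeneous Dirichlet condition, which confines them to $D_q(A^\delta)$ only for $\delta<1/(2q)$ (see \eqref{delta-est} and the remarks around \eqref{high-bdd}), not from a weighted Hardy inequality for the Bogovskii corrector.

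The genuine gap is in part 3. The identity $h^{-1}\bigl(T(t,s+h)f-T(t,s)f\bigr)=-T(t,s+h)\,h^{-1}\bigl(T(s+h,s)f-f\bigr)$ is correct, but the claim that part 2 yields $h^{-1}\bigl(T(s+h,s)f-f\bigr)\to-L(s)f$ is not: \eqref{st-cl} gives differentiability only on the open interval $(s,\infty)$, and \eqref{Y-Z} only controls $\|\partial_\sigma T(\sigma,s)f\|_q\leq C(\sigma-s)^{-1+\delta}\|f\|_{Z_q(D)}$, so the averaged derivative $h^{-1}\int_s^{s+h}\partial_\sigma T(\sigma,s)f\,d\sigma$ is a priori only $O(h^{-1+\delta})$ and need not converge. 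One-sided differentiability at the endpoint $\sigma=s$ with derivative $-L(s)f$ is precisely where the stronger hypothesis $f\in Y_q(D)$ (rather than $Z_q(D)$) must be exploited through the parametrix: for such $f$ the quantities $L(\sigma)W(\sigma,s)f$ and $K(\sigma,s)f$ remain bounded as $\sigma\to s$, and it is this boundedness, fed into \eqref{W-eq} and \eqref{ite-IE}, that makes the difference quotient converge. This step has to be carried out; it does not follow formally from part 2. (A smaller repair: $T(t,s+h)\to T(t,s)$ only strongly, not in ${\mathcal L}(L^q_\sigma(D))$; strong convergence together with the uniform bound \eqref{sm} is what lets you pass to the limit in the product.)
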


Several remarks are in order.

As for the smoothing estimate \eqref{sm}, the case $(j,r)=(1,\infty)$ is missing in \cite{HR14, Hi20}, but this follows from
\eqref{wei-est1} with $\alpha=0$ in Theorem \ref{weighted-est} or from \eqref{2nd-sm} in Theorem \ref{strong-sol}
of the present paper.

Among the regularity described in \eqref{st-cl}, the continuity with values in $W^{2,q}(D)$ is not explicitly claimed in \cite{HR14, Hi20},
but it is indeed true.
In fact, by Remark \ref{rem-hoe-wh} together with \eqref{reg-sob-bdd}
we find that 
$W(\cdot,s)f$ with $f\in Z_q(D)$ 
is continuous on $(s,\infty)$ with values in $W^{2,q}(D)$, 
and thereby, so is $T_j(\cdot,s)f$ for every $j\geq 0$, 
see \eqref{appro} and \eqref{iteration}, provided $q\in (\frac{3}{2},\infty)$,
where the right continuity is easy to show, while the proof
of the left continuity needs a bit device done by Tomoki Takahashi \cite[Lemma 4.6]{T22}.
Since the evolution operator \eqref{evo-series} is convergent in $W^{2,q}(D)$ uniformly in $(t,s)$ with
$0\leq s<t\leq {\mathcal T}$ and $t-s\geq \varepsilon$ for every $\varepsilon \in (0,{\mathcal T})$, see Lemma \ref{lem-ite},
we are led to $T(\cdot,s)f\in C((s,\infty);\, W^{2,q}(D))$.
From \eqref{st-cl} it follows that $(\omega(\cdot)\times x)\cdot\nabla T(\cdot,s)f\in C((s,\infty);\, L^q(D))$
for every $f\in Z_q(D)$, however,
it does not seem to be clear whether 
$T(\cdot,s)f\in C((s,\infty);\, Y_q(D))$ for such $f$, see Remark \ref{rem-conti} below.

From the Muckenhoupt theory of singular integrals, see 
Farwig and Sohr \cite[Section 2]{FS97}, Stein \cite[Chapter V]{St} and Torchinsky \cite[Chapter IX]{Tor},
the Riesz transform
${\mathcal R}=\nabla (-\Delta)^{-1/2}$ satisfies
$\|\rho{\mathcal R}h\|_{q,\mathbb R^3}\leq C\|\rho h\|_{q,\mathbb R^3}$
with the weight \eqref{weight} if and only if $\frac{n}{n-1}=\frac{3}{2}<q<\infty$ ($n$ denotes the space dimension),
see \eqref{riesz} below for a bit generalized form.
The restriction $q\in (\frac{3}{2},\infty)$ in the item 2 of Proposition \ref{so-far} stems from this fact, 
and a key observation for the proof is that
the Fujita-Kato projection $P$ enjoys \eqref{key-20} below with $\alpha=1$
for such $q$, 
see \cite[Lemma 5.2]{Hi20}.
Thus, it would be difficult to remove the restriction on $q$.
Estimate \eqref{Y-Z} can be proved under the assumption \eqref{ass1-rigid} as in \cite[(5.18)]{Hi20},
where the same estimate for all $(t,s)$ with $0<t-s\leq {\mathcal T}$ was deduced under \eqref{ass2-rigid}.
Nevertheless, the smoothing rate in \eqref{Y-Z} does not seem to be sharp.
The item 2 of Proposition \ref{so-far} tells us that $u(t)=T(t,s)f$ with $f\in Z_q(D)$ 
provides a strong solution to \eqref{evo}, however,
the condition $\rho\nabla f\in L^q(D)$ is not very useful to proceed to the nonlinear stage.
It is reasonable to expect that a strong solution is still available
even if replacing this condition by $\rho f\in L^q(D)$.
Having this in mind, we introduce weighted $L^q$ spaces of solenoidal vector fields
with the specific weight of the form $\rho^\alpha$ 
by following Farwig and Sohr \cite{FS97}, who discussed more general weights.

We denote by $G$ either the exterior domain $D$ under consideration or the whole space $\mathbb R^3$.
Let $q\in [1,\infty]$ and $\alpha\geq 0$, then
the space $L^q_\alpha(G)$ consists of all measurable functions $f$ such that $\rho^\alpha f\in L^q(G)$
with the weight function $\rho$ given by \eqref{weight}.
For $q\in (1,\infty)$ we also define the space $L^q_{\alpha,\sigma}(G)$ of solenoidal vector fields by
\begin{equation}
L^q_{\alpha,\sigma}(G)
:=L^q_\alpha(G)\cap L^q_\sigma(G)
=\left\{
\begin{array}{ll}
\{u\in L^q_\alpha(D);\, \mbox{div $u$}=0,\, \nu\cdot u|_{\partial D}=0\},\quad &G=D,  \\
\{u\in L^q_\alpha(\mathbb R^3);\, \mbox{div $u$}=0\}, &G=\mathbb R^3, \\
\end{array}
\right.
\label{sole-chara-wei}
\end{equation}
see \eqref{sole-chara}.
Both $L^q_\alpha(G)$ and $L^q_{\alpha,\sigma}(G)$
are Banach spaces endowed with norm $\|\rho^\alpha(\cdot)\|_{q,G}$.
If, in particular, $\alpha\in (0,3)$ and $q\in (\frac{3}{3-\alpha},\infty)$, then the weight function $\rho^{\alpha q}$
belongs to the Muckenhoupt class ${\mathcal A}_q(G)$, see \cite[Section 2]{FS97}
for the details, which implies that singular integral operators are bounded with respect to the 
norm $\|\rho^\alpha(\cdot)\|_{q,G}$.
What we particularly need is that 
the Fujita-Kato projection $P$ 
(through the weak Neumann problem) enjoys
\begin{equation}
\|\rho^\alpha P_Gf\|_{q,G}\leq C\|\rho^\alpha f\|_{q,G}, \qquad G=D,\, \mathbb R^3,
\label{FK-wei}
\end{equation}
for every vector field $f\in L^q_\alpha(G)$ 
as long as $\alpha\in (0,3)$ and $q\in (\frac{3}{3-\alpha},\infty)$.
For such $\alpha$ and $q$,
Farwig and Sohr \cite[Theorem 1.3, Corollary 4.4]{FS97} 
established the Helmholtz decomposition of the weighted space of vector fields
\[
L^q_\alpha(G)=L^q_{\alpha,\sigma}(G)\oplus \{\nabla p\in L^q_\alpha(G);\, p\in L^q_{\rm loc}(\overline{G})\}
\]
and showed that the class
$C_{0,\sigma}^\infty(G)$ 
is dense in $L^q_{\alpha,\sigma}(G)$.
When $G=D$, this denseness
had been already found by
Specovius-Neugebauer \cite[Lemma 9]{SN90} 
for the case of the specific weight $\rho^\alpha$ 
even before \cite{FS97}.

We do not know whether the evolution operator $T(t,s)$ obtained in Proposition \ref{so-far}
satisfies the continuity
$(t,s)\mapsto T(t,s)f\in L^q_{\alpha,\sigma}(D)$ for each $f\in L^q_{\alpha,\sigma}(D)$, where $\alpha\in (0,3)$ and 
$q\in (\frac{3}{3-\alpha},\infty)$, 
however, we do not have to regard it as the evolution operator on $L^q_{\alpha,\sigma}(D)$ at all in the present paper.
For instance, in Theorem \ref{duha-strong} below,
we may consider the function \eqref{duha} under the assumptions \eqref{ass2-force}--\eqref{ass3-force} with
$L^q_{2,\sigma}(D)$ defined by \eqref{sole-chara-wei} even for $q\leq 3$, although
this space can be no longer the solenoidal part of the 
aforementioned Helmholtz decomposition of $L^q_2(D)$
unless $q\in (3,\infty)$.

Let us begin with smoothing estimates of the evolution operator near $t=s$ in the weighted space.
The same estimates for the Stokes semigroup are well known and those for the Oseen semigroup even with
an anisotropic weight have been deduced by Tomoki Takahashi \cite[Theorem 2.4]{T24}.
\begin{theorem}
Suppose that $\eta$ and $\omega$ fulfill \eqref{ass1-rigid} for some $\theta\in (0,1]$.
Let $\alpha\in [0,3)$, $q\in (\frac{3}{3-\alpha},\infty)$, $r\in [q,\infty]$ and $j\in \{0,1\}$.
For each ${\mathcal T}\in (0,\infty)$ and $m\in (0,\infty)$, 
there is a constant $C=C({\mathcal T},m,\alpha,q,r,\theta,D)>0$ such that
\begin{equation}
\|\rho^\alpha\nabla^jT(t,s)f\|_r
\leq C(t-s)^{-j/2-(3/q-3/r)/2}\|\rho^\alpha f\|_q
\label{wei-est1}
\end{equation}
for 
all $(t,s)$ with $0\leq s<t\leq {\mathcal T}$ and $f\in L^q_{\alpha,\sigma}(D)$ whenever
\eqref{ever}
is satisfied, where the weight function $\rho$ is given by \eqref{weight}.
Furthermore, we have
\begin{equation}
\lim_{t\to s}\, (t-s)^{j/2+(3/q-3/r)/2}\|\rho^\alpha\nabla^jT(t,s)f\|_r=0
\label{wei-small1}
\end{equation}
for every $f\in L^q_{\alpha,\sigma}(D)$ except the case $(j,r)=(0,q)$;
in that case, it holds that
\begin{equation}
\lim_{t\to s}\|\rho^\alpha\big(T(t,s)f-f\big)\|_q=0
\label{IC-1}
\end{equation}
for every $f\in L^q_{\alpha,\sigma}(D)$.
\label{weighted-est}
\end{theorem}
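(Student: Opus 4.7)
My strategy is to track the weight $\rho^\alpha$ through the Hansel--Rhandi iterative construction of $T(t,s)$. The case $\alpha=0$ (which includes the previously open endpoint $(j,r)=(1,\infty)$) will be subsumed by the general argument, while for $\alpha\in(0,3)$ I use the series expansion $T(t,s)=\sum_{j\geq 0}T_j(t,s)$ from \eqref{evo-series}, where each $T_j$ is built via the iteration \eqref{appro}--\eqref{iteration} from the whole-space evolution operator $U(t,s)$, a bounded-domain evolution operator $V(t,s)$ on $D_R$, a cutoff function $\phi$ localizing near $\partial D$, and a Bogovskii-type correction that restores the divergence-free condition.

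The first step is to establish weighted smoothing for each building block. For $U(t,s)$ in $\mathbb{R}^3$ the required estimate of the form \eqref{wei-est1} follows from the weighted smoothing estimates of Tomoki Takahashi \cite{T24}, which cover the isotropic weight $\rho^\alpha$ in the admissible range $q>3/(3-\alpha)$. For $V(t,s)$ on the bounded $D_R$ the weight is uniformly bounded above and below, so weighted estimates are equivalent to the standard ones already used in \cite{HR14,Hi20}. The Bogovskii correction is supported in a compact annulus where $\nabla\phi\neq 0$, and hence its weighted and unweighted norms are comparable. The weighted boundedness \eqref{FK-wei} of the Fujita--Kato projection, available precisely because $\rho^{\alpha q}\in\mathcal{A}_q$ when $q>3/(3-\alpha)$, handles the projection back to the solenoidal subspace. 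Together these ingredients yield the weighted bound on the approximant $W(t,s)=T_0(t,s)$.

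Next, the weighted bound propagates through the iteration: the remainder operator in \eqref{iteration} involves commutators with $\phi$ supported in a compact set, so the weight $\rho^\alpha$ contributes only a bounded factor at each step, and the $(t-s)$-singularities compose as in the unweighted argument (Lemma \ref{lem-ite}). Summation yields \eqref{wei-est1} for $r<\infty$, and the endpoint $r=\infty$ is then reached by a splitting $T(t,s)=T(t,\tfrac{t+s}{2})T(\tfrac{t+s}{2},s)$ that combines the already-established weighted $L^q\to L^{r_0}$ bound for some large finite $r_0$ with the weighted $L^{r_0}\to L^\infty$ endpoint inherited from the whole-space block via the same iteration.

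Finally, the limits \eqref{wei-small1} and \eqref{IC-1} follow from a standard density argument based on $C_{0,\sigma}^\infty(D)$ being dense in $L^q_{\alpha,\sigma}(D)$ \cite{SN90,FS97}. For smooth compactly supported $g$, one applies \eqref{wei-est1} with $q$ replaced by some $q_1<q$ still in the admissible range to gain an extra factor $(t-s)^{(3/q-3/q_1)/2}\to 0$; for \eqref{IC-1} one uses in addition the strong continuity $T(t,s)g\to g$ in $L^q$ from Proposition \ref{so-far} together with the boundedness of $\rho^\alpha$ on the support of $g$. The uniform bound \eqref{wei-est1} then closes an $\varepsilon/3$ argument for arbitrary $f$. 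The main obstacle I anticipate is the careful verification that the Bogovskii and commutator terms remain bounded in the weighted norms across every iteration step; this is exactly where the Muckenhoupt constraint $q>3/(3-\alpha)$ is essential, and keeping track of the constants uniformly in $j$ as $\alpha\to 3$ or $q\to 3/(3-\alpha)$ will require careful bookkeeping.
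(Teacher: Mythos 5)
Your treatment of \eqref{wei-est1} follows essentially the same route as the paper: weighted estimates for the whole-space block (the paper proves these directly from the explicit representation \eqref{rep-wh} in Proposition \ref{weighted-wh} rather than quoting \cite{T24}, which concerns the autonomous Oseen semigroup, but the substance is the same), equivalence of weighted and unweighted norms for the bounded-domain and Bogovskii pieces, \eqref{FK-wei} for the projection, Lemma \ref{lem-ite} for the iteration, and the semigroup splitting for the remaining exponents. That part is fine.

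The genuine gap is in \eqref{wei-small1}. Your density argument shifts the integrability exponent in the wrong direction: applying \eqref{wei-est1} from $L^{q_1}$ to $L^r$ for smooth compactly supported $g$ produces the factor $(t-s)^{(3/q-3/q_1)/2}$, which tends to $0$ only if $q_1>q$; with $q_1<q$ as you propose it blows up. Taking $q_1>q$ works whenever $r>q$ (choose $q_1\in(q,r]$), but it is unavailable in the critical case $(j,r)=(1,q)$, since there you would need $q_1>q=r$, violating $q_1\leq r$. This is precisely the only nontrivial case, and the paper handles it by a separate mechanism: it introduces the space $Z_{q,\alpha}(D)$ of \eqref{Z-ge} and proves the operator bound \eqref{gZZest}, $\|T(t,s)f\|_{Z_{q,\alpha}(D)}\leq C(t-s)^{-1/2+\delta}\|f\|_{Z_{q,\alpha}(D)}$, whose extra gain $(t-s)^{\delta}$ yields the vanishing limit for $f\in C^\infty_{0,\sigma}(D)\subset Z_{q,\alpha}(D)$. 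Establishing \eqref{gZZest} requires the additional weighted gradient estimates \eqref{appro-est-wei} for the parametrix and the second alternative in \eqref{remain-gr-wei} for the remainder (the latter resting on \eqref{B-est-wh} so that no weighted norm of $\nabla f$ is needed), followed by another run of Lemma \ref{lem-ite} in $Z_{q,\alpha}(D)$. None of this is present in your plan. Separately, your argument for \eqref{IC-1} is insufficient as stated: $T(t,s)g$ does not remain compactly supported, so "boundedness of $\rho^\alpha$ on the support of $g$" does not control $\|\rho^\alpha(T(t,s)g-g)\|_q$ outside $\mathrm{supp}\,g$. The paper instead writes $T(t,s)f-f=-\int_s^t T(t,\sigma)L(\sigma)f\,d\sigma$ for $f\in C^\infty_{0,\sigma}(D)$ and applies \eqref{wei-est1} to the integrand; you should adopt that identity (or supply an equi-integrability argument in the far field) to close this step.
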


We next show that the evolution operator indeed provides a strong solution to \eqref{evo} for every $f\in L^q_{1,\sigma}(D)$,
see \eqref{sole-chara-wei}, as long as $q\in (3/2,\infty)$.
In fact, the item 3 of the following theorem is an improvement of the item 2 of Proposition \ref{so-far}.
\begin{theorem}
Suppose that $\eta$ and $\omega$ fulfill \eqref{ass1-rigid} for some $\theta\in (0,1]$.

\begin{enumerate}
\item
Let $1<q\leq r<\infty$.
For each ${\mathcal T}\in (0,\infty)$ and $m\in (0,\infty)$, there is a constant
$C=C({\mathcal T},m,q,r,\theta,D)>0$ such that
\begin{equation}
\|\nabla^2 T(t,s)f\|_r\leq C(t-s)^{-1-(3/q-3/r)/2}\|f\|_q
\label{2nd-sm}
\end{equation}
for all $(t,s)$ with $0\leq s<t\leq {\mathcal T}$ and $f\in L^q_\sigma(D)$ whenever \eqref{ever} is satisfied.

\item
Let $q\in (\frac{3}{2},\infty)$.
For every $f\in L^q_\sigma(D)$ and $t\in (s,\infty)$, we have $T(t,s)f\in D_q(A)$, see \eqref{stokes}, and therefore,
$T(t,s)f|_{\partial D}=0$ in the sense of trace.

For each ${\mathcal T}\in (0,\infty)$ and $m\in (0,\infty)$, there is a constant $C=C({\mathcal T},m,q,\theta,D)>0$ such that
\begin{equation}
\|AT(t,s)f\|_q\leq C(t-s)^{-1}\|f\|_q
\label{A-evo}
\end{equation}
for all $(t,s)$ with $0\leq s<t\leq {\mathcal T}$ and $f\in L^q_\sigma(D)$ whenever \eqref{ever} is satisfied.

\item
Let $q\in (\frac{3}{2},\infty)$. 
For every $f\in L^q_{1,\sigma}(D)$, the function 
$T(t,s)f$ 
is of class \eqref{st-cl}
and satisfies \eqref{str} in $L^q_\sigma(D)$.

For each ${\mathcal T}\in (0,\infty)$,
$m\in (0,\infty)$ and  $R\in (1,\infty)$, there
are constants $C=C({\mathcal T},m,q,\theta,D)>0$ and $C^\prime=C^\prime({\mathcal T},m,q,R,\theta,D)>0$ such that 
\begin{equation}
\|\partial_tT(t,s)f\|_q+\|T(t,s)f\|_{Y_q(D)}+\|\nabla p(t)\|_q  
\leq C(t-s)^{-1}\|f\|_q+C(t-s)^{-1/2}\|\rho f\|_q
\label{str-est}
\end{equation}
\begin{equation}
\|\partial_tT(t,s)f\|_{q,D_R}+\|\nabla p(t)\|_{q,D_R}
\leq C^\prime(t-s)^{-1}\|f\|_q
\label{str-local-est}
\end{equation}
for all $(t,s)$ with $0\leq s<t\leq {\mathcal T}$ and $f\in L^q_{1,\sigma}(D)$ whenever \eqref{ever} is satisfied,
where $p(t)$ is the pressure associated with $T(t,s)f$.
\end{enumerate}
\label{strong-sol}
\end{theorem}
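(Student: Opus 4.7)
I would prove the three items in the order 1, 2, 3, the core estimate being the $r=q$ case of item~1, namely
\[
\|\nabla^2 T(t,s)f\|_q\leq C(t-s)^{-1}\|f\|_q,\qquad f\in L^q_\sigma(D),\ q\in(1,\infty).
\]
Given this, item~2 is immediate: since $\Delta T(t,s)f\in L^q(D)$ and $P$ is bounded on $L^q(D)$, one has $AT(t,s)f=-P\Delta T(t,s)f\in L^q_\sigma(D)$ with $\|AT(t,s)f\|_q\le C\|\nabla^2 T(t,s)f\|_q\le C(t-s)^{-1}\|f\|_q$; the membership $T(t,s)f\in D_q(A)$ then follows from $T(t,s)f\in W^{2,q}(D)$ together with the zero Dirichlet trace built into the Hansel--Rhandi construction via $V(t,s)$. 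The case $r>q$ of item~1 follows from the $r=q$ case by the semigroup splitting $T(t,s)=T(t,\tau)T(\tau,s)$ with $\tau=(s+t)/2$: apply the $r=q$ estimate to $T(t,\tau)$ acting on $T(\tau,s)f$, which by \eqref{sm} is already in $L^r(D)$ with the required $L^q$--$L^r$ smoothing rate.

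For the core estimate itself, one cannot simply invoke \eqref{Y-Z}: although $g:=T(\tau,s)f\in L^q_\sigma(D)$ with $\nabla g\in L^q(D)$, the weighted quantity $\rho\nabla g$ is not controlled, so $g\notin Z_q(D)$. My plan is to work directly within the Hansel--Rhandi iterative representation of $T(t,s)$ built from the whole-space evolution $U(t,s)$ (analyzed in section~\ref{sect-whole}) and the bounded-domain evolution $V(t,s)$, glued by cutoffs and a Bogovskii correction. On $\mathbb R^3$ the absence of a boundary allows the rotation to be absorbed by a time-dependent orthogonal change of variables, reducing to a constant-coefficient Oseen-type parabolic problem for which $\|\nabla^2 U(t,s)f\|_q\le C(t-s)^{-1}\|f\|_q$ is standard. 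On $D_R$ the corresponding bound follows from the bounded-domain Stokes theory with the rigid motion treated as a bounded time-dependent perturbation. I would then show by induction along the iterative series (Lemma~\ref{lem-ite}) that this sharp second-derivative rate is preserved for $T(t,s)$.

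Item~3 I would derive by combining items~1--2 with Theorem~\ref{weighted-est}. Use the identity $\partial_tT(t,s)f=-L(t)T(t,s)f=-AT(t,s)f+(\eta+\omega\times x)\cdot\nabla T(t,s)f-\omega\times T(t,s)f$: the first term is $O((t-s)^{-1}\|f\|_q)$ by item~2; the $\eta$-drift and the zeroth-order term are lower order and controlled by \eqref{sm}; the rotational drift is bounded by $|\omega|\,\|\rho\nabla T(t,s)f\|_q\le C(t-s)^{-1/2}\|\rho f\|_q$ using Theorem~\ref{weighted-est} with $\alpha=1$, which is valid precisely because $q>3/2$. The pressure identity $\nabla p=(I-P)\Delta T(t,s)f$ (where \eqref{proj-drift} kills the drift under $I-P$) reduces $\|\nabla p\|_q$ to $C\|\nabla^2 T(t,s)f\|_q$. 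Summing these contributions yields \eqref{str-est}, and the local estimate \eqref{str-local-est} is cheaper because on $D_R$ the factor $|\omega\times x|$ is bounded by $|\omega|R$, so the rotational drift reduces to the unweighted \eqref{sm}. The regularity class $T(\cdot,s)f\in C^1((s,\infty);L^q_\sigma(D))\cap C((s,\infty);W^{2,q}(D))$ follows from the uniform-in-$(t,s)$ convergence of the iterative series in $W^{2,q}(D)$ noted in the remark after Proposition~\ref{so-far}.

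The main obstacle is the sharp $(t-s)^{-1}$ rate in the core estimate of item~1. Unlike the autonomous case, where this is a property of the quasi-analytic semigroup that one could extract from resolvent estimates, here no generator-based machinery is available; the already-known \eqref{Y-Z} is both too weak in rate ($-1+\delta$) and too demanding on the data (requires $\rho\nabla f\in L^q$). Closing this gap is precisely what forces the detailed whole-space analysis of section~\ref{sect-whole} and the inductive argument along the Hansel--Rhandi iteration; it is the nontrivial work of the theorem, with everything else being a relatively routine consequence of it together with the weighted smoothing estimates of Theorem~\ref{weighted-est}.
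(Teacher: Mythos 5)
Your reduction of items 2 and 3 to the core estimate $\|\nabla^2 T(t,s)f\|_q\leq C(t-s)^{-1}\|f\|_q$ matches the paper: item 2 is obtained there by density of $C_{0,\sigma}^\infty(D)$ and closedness of $A$ (close in spirit to your trace argument), and item 3 by writing $\partial_tT(t,s)f=-L(t)T(t,s)f$, using \eqref{A-evo} for the second-order part, \eqref{wei-est1} with $\alpha=1$ for the rotational drift, and the fact that $P$ is absent from the drift term (via \eqref{proj-drift}) to get the localized bound \eqref{str-local-est}. The genuine gap is in the core estimate itself. Your plan to ``show by induction along the iterative series (Lemma~\ref{lem-ite}) that this sharp second-derivative rate is preserved'' cannot close as stated: Lemma~\ref{lem-ite} requires the exponent $\beta$ of $E_0$ to lie in $[0,1)$, and, concretely, the Duhamel term $\int_s^t\nabla^2T_j(t,\tau)PK(\tau,s)f\,d\tau$ involves the kernel $(t-\tau)^{-1}(\tau-s)^{-(1+1/q)/2}$, whose integral diverges. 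Knowing the sharp rate for each $T_j$ therefore does not let you pass it to $T_{j+1}$; the whole-space and bounded-domain bounds you cite only control the parametrix $W(t,s)$, not the iteration.

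What the paper actually does is a two-step composition through the fractional power domain $D_q(A^\delta)$ with $\delta\in(0,1/2q)$. First, interpolating \eqref{sm} gives $\|T(\tau,s)f\|_{D_q(A^\delta)}\leq C(\tau-s)^{-\delta}\|f\|_q$; the restriction $\delta<1/2q$ is essential because then $D_q(A^\delta)$ carries no boundary condition and coincides with $\big[L^q_\sigma(D),L^q_\sigma(D)\cap W^{1,q}(D)\big]_{2\delta}$. Second, for data $g\in D_q(A^\delta)$ one proves $\|T(t,\tau)g\|_{W^{2,q}(D)}\leq C(t-\tau)^{-1+\delta}\|g\|_{D_q(A^\delta)}$: Lemma~\ref{lem-frac} transfers the fractional regularity to the cut-off pieces $g_0\in H^{2\delta}_q(\mathbb R^3)$ and $g_1\in D_q(A_7^\delta)$, so the parametrix obeys the $(t-\tau)^{-1+\delta}$ rate, and---crucially---the remainder satisfies $\|PK(t,\tau)g\|_{D_q(A^\delta)}\leq C(t-\tau)^{-1+\zeta}\|g\|_q$ with $\zeta>0$, obtained by interpolating \eqref{remain-est} with \eqref{remain-gr-est}, which again works only because $PK$ need not vanish on $\partial D$ and $\delta<1/2q$. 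With $\beta=1-\delta<1$ and $\gamma=1-\zeta<1$, Lemma~\ref{lem-ite} now applies, and composing the two steps at the intermediate time $(t+s)/2$ recovers the sharp $(t-s)^{-1}$. This detour through $D_q(A^\delta)$ is the missing idea in your proposal, not a routine detail: without it the induction has no admissible starting exponents and the series cannot be summed.
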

\begin{remark}
In \cite{Hi99} the second author studied 
the autonomous case, in which $\omega\in \mathbb R^3\setminus \{0\}$ is a constant vector and $\eta=0$, 
and proved that
the semigroup $e^{-tL}f$ provides a strong solution in $L^2_\sigma(D)$ if, for instance, $(\omega\times x)\cdot\nabla f\in W^{-1,2}(D)$
as well as $f\in D_2(A^\delta)$ with arbitrarily small $\delta>0$, where the desired case $\delta=0$ was at that time 
excluded for some technical reason.
This condition already suggested our condition $f\in L^q_{1,\sigma}(D)$ in the item 3 above.
We observe that \eqref{str-est} is consistent with (2.10) of \cite{Hi99} for $(\delta,s)=(0,1)$. 
\label{rem-ic}
\end{remark}
\begin{remark}
In \eqref{str-est} we have less singularity with the weighted norm of initial data near the initial time.
This must be useful to investigate the regularity of the Duhamel term \eqref{duha-0}.
For \eqref{str-local-est} near the body, 
the weighted norm is no longer needed and this might help us to study the fluid-structure interaction problem, see section \ref{intro}.
Estimate \eqref{str-local-est} can be compared with the weak estimate
\begin{equation}
\|\partial_tT(t,s)f\|_{W^{-1,q}(D_R)}+\|p(t)\|_{q,D_R}
\leq C(t-s)^{-(1+1/q)/2}\|f\|_q
\label{c1-weak}
\end{equation}
for all $(t,s)$ with $0\leq s<t\leq {\mathcal T}$ and $f\in L^q_\sigma(D)$, where $1<q<\infty$.
Here, the pressure $p(t)$ is singled out in such a way that
$\int_{D_R}p(t)\,dx=0$.
In \cite[Propsition 5.1]{Hi20} this was proved for all $(t,s)$ with $0< t-s\leq {\mathcal T}$ under the condition \eqref{ass2-rigid},
but the proof of \eqref{c1-weak} for $(t,s)$ mentioned above under \eqref{ass1-rigid} is exactly the same.
\label{rem-str-est}
\end{remark}
\begin{remark}
For the autonomous case, estimate \eqref{2nd-sm} with $r=q$
for the spatial derivative of second order was already found by Shibata \cite{Shi10}.
\label{rem-2nd-sm}
\end{remark}
\begin{remark}
The desired continuity 
$T(\cdot,s)f\in C((s,\infty);\, Y_q(D))$ for $f\in L^q_{1,\sigma}(D)$
is still unclear,
see Remark \ref{rem-2nd-wh} for the whole space problem.
If, in particular, $\omega(t)$ keeps a constant direction for all $t$ such that
$D_2(L(t))=\widetilde Y_2(D):=\{u\in D_2(A); (e_3\times x)\cdot\nabla u\in L^2(D)\}$,
then one can apply the Kato theory of
evolution operators of hyperbolic type (\cite{Ta})
to show that $T(\cdot,s)f\in C((s,\infty);\widetilde Y_2(D))$ for $f\in \widetilde Y_2(D)$, see \cite{Hi01}, 
and thereby for $f\in L^2_{1,\sigma}(D)$
on account of the item 3 of Theorem \ref{strong-sol}.
\label{rem-conti}
\end{remark}

The H\"older estimate of the evolution operator is a consequence of Theorems \ref{weighted-est} and \ref{strong-sol}.
\begin{theorem}
Suppose that $\eta$ and $\omega$ fulfill \eqref{ass1-rigid} for some $\theta\in (0,1]$.
Let $q\in (\frac{3}{2},\infty)$, $r\in [q,\infty]$ and $j\in \{0,1\}$. 
For each ${\mathcal T}\in (0,\infty)$, $m\in (0,\infty)$ and
given $\mu$ satisfying
\begin{equation*}
\begin{array}{ll}
0<\mu\leq 1&(j=0,\,r<\infty), \\
0<\mu<1 &(j=0,\,r=\infty), \\
0<\mu\leq 1/2 &(j=1,\, r<\infty), \\
0<\mu<1/2 \qquad &(j=1,\, r=\infty),
\end{array}
\end{equation*}
there is a constant $C=C({\mathcal T},m,q,r,\mu,\theta,D)>0$ such that 
\begin{equation}
\|\nabla^j T(t,s)f-\nabla^j T(\tau,s)f\|_r
\leq C(t-\tau)^\mu(\tau-s)^{-j/2-(3/q-3/r)/2-\mu}\|\rho f\|_q
\label{hoe-est}
\end{equation}
for all $(t,\tau,s)$ with $0\leq s<\tau<t\leq {\mathcal T}$
and $f\in L^q_{1,\sigma}(D)$ whenever \eqref{ever} is satisfied.
\label{hoelder}
\end{theorem}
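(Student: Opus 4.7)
The plan is to obtain the H\"older estimate by interpolation between a ``rough'' bound from the smoothing estimates and a ``smooth'' bound from the existence of the time derivative $\partial_\sigma T(\sigma,s)f$. Since $f \in L^q_{1,\sigma}(D)$ with $q \in (3/2,\infty)$, item 3 of Theorem \ref{strong-sol} ensures that $T(\cdot,s)f \in C^1((s,\infty);\,L^q_\sigma(D))$ with $\partial_\sigma T(\sigma,s)f=-L(\sigma)T(\sigma,s)f$, and that $T(\sigma,s)f \in D_q(A) \cap W^{2,q}(D)$ for $\sigma>s$; by the $L^q$--$L^r$ smoothing in \eqref{sm} and \eqref{2nd-sm}, this identity makes sense in $L^r(D)$ for $r \in [q,\infty)$.

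For $j=0$ and $r<\infty$, the fundamental theorem of calculus gives
\[
T(t,s)f-T(\tau,s)f=-\int_\tau^t L(\sigma)T(\sigma,s)f\,d\sigma,
\]
and I would estimate the integrand in $L^r(D)$ by splitting $L(\sigma)T = AT - (\eta+\omega\times x)\cdot\nabla T + \omega \times T$ and applying \eqref{2nd-sm} to $\|AT\|_r \le C\|\nabla^2 T\|_r$, \eqref{sm} to $\|\nabla T\|_r$ and $\|T\|_r$, and \eqref{wei-est1} with $\alpha=1$ to the drift term via the pointwise control $|(\omega\times x)\cdot\nabla T| \le |\omega|\,\rho\,|\nabla T|$. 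The dominant singularity comes from \eqref{2nd-sm}, and after absorbing $\|f\|_q\le\|\rho f\|_q$ one obtains
\[
\|\partial_\sigma T(\sigma,s)f\|_r \le C(\sigma-s)^{-1-(3/q-3/r)/2}\,\|\rho f\|_q.
\]
Integration, together with the elementary inequality
\[
\int_\tau^t(\sigma-s)^{-a}\,d\sigma \le C_\mu\,(t-\tau)^\mu(\tau-s)^{1-a-\mu}\qquad(\mu\in[0,1],\,a\ge 1),
\]
applied with $a=1+(3/q-3/r)/2$, yields the claim for $\mu\in(0,1]$.

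For $j=1$ and $r<\infty$, I would set $g=T(t,s)f-T(\tau,s)f$ and apply the Gagliardo-Nirenberg inequality $\|\nabla g\|_r \le C\|g\|_r^{1/2}\|\nabla^2 g\|_r^{1/2}$, which is available because $g \in W^{2,r}(D) \cap W^{1,r}_0(D)$ thanks to $T(\sigma,s)f \in D_q(A)\subset W^{1,q}_0(D)$ together with \eqref{sm}--\eqref{2nd-sm}. Combining the $j=0$ bound for $\|g\|_r$ with $\|\nabla^2 g\|_r\le C(\tau-s)^{-1-(3/q-3/r)/2}\|\rho f\|_q$ from \eqref{2nd-sm}, the H\"older exponent in $t-\tau$ is halved, converting $\mu\in(0,1]$ into $\mu\in(0,1/2]$ for the gradient. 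The endpoint cases $r=\infty$ (for both $j=0$ and $j=1$) are then covered by one further Gagliardo-Nirenberg step $\|h\|_\infty \le C\|h\|_{r'}^{1-3/r'}\|\nabla h\|_{r'}^{3/r'}$ with some $r' \in (3,\infty)$, applied to $h=g$ or $h=\nabla g$ respectively; since $3/r'>0$ strictly reduces the H\"older exponent, only $\mu<1$ (resp.\ $\mu<1/2$) is recovered, matching the statement.

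The main obstacle is the $L^r$ bound on $\|\partial_\sigma T(\sigma,s)f\|_r$ used above, because the coefficient $\omega(\sigma)\times x$ of the drift term is unbounded at infinity; this is precisely what the newly established weighted estimate \eqref{wei-est1} with $\alpha=1$ in Theorem \ref{weighted-est} was designed to handle. Without it, the drift contribution could not be absorbed into a bound depending only on $\|\rho f\|_q$, and the interpolation argument could not close.
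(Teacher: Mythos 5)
Your argument is correct, and for $j=0$, $r<\infty$ it coincides in substance with the paper's (the paper writes $\|\partial_\sigma T(\sigma,s)f\|_r$ via the semigroup property and \eqref{str-est}, you bound $\|L(\sigma)T(\sigma,s)f\|_r$ directly from \eqref{2nd-sm} and \eqref{wei-est1} with $\alpha=1$; both give $(\sigma-s)^{-1-\beta}\|f\|_q+(\sigma-s)^{-1/2-\beta}\|\rho f\|_q$, and your integral inequality is just the paper's interpolation $\mbox{\eqref{hoe-easy}}^{\mu}\mbox{\eqref{rough}}^{1-\mu}$ folded into one step). Where you genuinely diverge is in the cases $j=1$ and $r=\infty$: the paper, observing that neither $\|\nabla\partial_\sigma T(\sigma,s)f\|_r$ nor $\|\partial_\sigma T(\sigma,s)f\|_\infty$ is accessible, switches to the identity \eqref{hoe-based}, $\nabla^j(T(t,\tau)-I)T(\tau,s)f=-\int_\tau^t\nabla^jT(t,\sigma)L(\sigma)T(\tau,s)f\,d\sigma$, so that the gradient (or the passage to $L^\infty$) falls on the outer operator $T(t,\sigma)$ where \eqref{sm}/\eqref{wei-est1} apply, while $L(\sigma)$ acts at the fixed time $\tau$. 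You instead interpolate in space: Gagliardo--Nirenberg between the already-proved $j=0$ H\"older bound and the uniform second-derivative bound \eqref{2nd-sm}, which converts $\mu\in(0,1]$ into $\mu\in(0,1/2]$ for the gradient, and one more Sobolev step handles $r=\infty$ with the strict inequalities on $\mu$. Your route is more elementary and avoids \eqref{hoe-based} entirely; two small caveats are that on the exterior domain $D$ the multiplicative inequality $\|\nabla g\|_r\le C\|g\|_r^{1/2}\|\nabla^2 g\|_r^{1/2}$ generally carries an additive term $C\|g\|_r$, which you should note is harmless here since $t-\tau$ and $\tau-s$ are bounded by ${\mathcal T}$, and that the identity $\partial_\sigma T(\sigma,s)f=-L(\sigma)T(\sigma,s)f$, stated in $L^q_\sigma(D)$, transfers to $L^r$ because both sides lie in $L^q\cap L^r$. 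What the paper's approach buys and yours loses is the refinement recorded in Remark \ref{rem-precise}: the estimates \eqref{hoe-est1} and \eqref{hoe-est2}--\eqref{hoe-est4} keep the unweighted norm $\|f\|_q$ on the most singular terms and reserve $\|\rho f\|_q$ for the less singular ones, whereas your spatial interpolation mixes the two contributions; this does not affect the statement \eqref{hoe-est} as given.
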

\begin{remark}
In fact, to be more precise than \eqref{hoe-est}, we deduce
\eqref{hoe-est1} and \eqref{hoe-est2}--\eqref{hoe-est4}, in which the weighted norm is needed only for less singular terms
near $\tau=s$.
\label{rem-precise}
\end{remark}

Let $q\in (\frac{3}{2},\infty)$ and
$0\leq s<{\mathcal T}<\infty$.
Assume that
\begin{equation}
(\cdot -s)^\kappa g\in L^\infty(s,{\mathcal T};\,L^q_{1,\sigma}(D))
\label{ass1-force}
\end{equation}
with some $\kappa\in [0,1)$.
Then one can derive the H\"older continuity of the function
\begin{equation}
v(t)=\int_s^tT(t,\sigma)g(\sigma)\,d\sigma \qquad (s<t\leq {\mathcal T})
\label{duha}
\end{equation}
in the standard manner
as a simple corollary to Theorem \ref{hoelder}.
\begin{corollary}
In addition to the assumptions of Theorem \ref{hoelder} on $\eta, \omega, q, r$ and $j$, 
assume further that $q$ and $r$ satisfy
\[
\frac{1}{q}-\frac{1}{r}<\frac{1}{3}\qquad\mbox{when $j=1$}
\]
and that, given $s$ and ${\mathcal T}$ with $0\leq s<{\mathcal T}<\infty$, 
the function $g(t)$ fulfills \eqref{ass1-force} for some $\kappa\in [0,1)$.
For each $m\in (0,\infty)$ and given $\mu$ satisfying
\begin{equation}
\kappa-\frac{j}{2}-\beta
<\mu<1-\frac{j}{2}-\beta, \qquad \beta:=\frac{3}{2}\left(\frac{1}{q}-\frac{1}{r}\right),
\label{duha-hoe-exp}
\end{equation}
there is a constant $C=C({\mathcal T},m,q,r,\mu,\kappa,\theta,D)>0$ 
such that the function $v(t)$ given by \eqref{duha} enjoys
\begin{equation}
\|\nabla^jv(t)-\nabla^jv(\tau)\|_r\leq C
(t-\tau)^\mu (\tau-s)^{-j/2-\beta-\mu}({\mathcal T}-s)^{1-\kappa}[g]_{q,1,\kappa}
\label{duha-hoe-est}
\end{equation}
for all $(t,\tau)$ with $s<\tau<t\leq {\mathcal T}$ whenever \eqref{ever} is satisfied, where
\begin{equation}
[g]_{q,\alpha,\kappa}:=
\sup_{\sigma\in (s,{\mathcal T})}(\sigma-s)^{\kappa}\|\rho^\alpha g(\sigma)\|_q.
\label{g-wei}
\end{equation}
\label{duha-hoelder}
\end{corollary}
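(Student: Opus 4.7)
The plan is to use the standard Duhamel splitting
$$\nabla^j v(t)-\nabla^j v(\tau)=\int_\tau^t \nabla^j T(t,\sigma)g(\sigma)\,d\sigma+\int_s^\tau \bigl[\nabla^j T(t,\sigma)-\nabla^j T(\tau,\sigma)\bigr]g(\sigma)\,d\sigma =: I_1+I_2,$$
bounding $I_1$ by the smoothing estimate \eqref{wei-est1} (applied with $\alpha=0$) and $I_2$ by the H\"older estimate \eqref{hoe-est}. The exponent $\mu$ allowed by \eqref{duha-hoe-exp} sits inside the admissible range of Theorem \ref{hoelder}, and the hypothesis $1/q-1/r<1/3$ when $j=1$ guarantees $1-j/2-\beta>0$, which is what drives the integrability of both pieces.

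For $I_1$, I would combine \eqref{wei-est1} with $\|g(\sigma)\|_q\leq\|\rho g(\sigma)\|_q\leq(\sigma-s)^{-\kappa}[g]_{q,1,\kappa}$ and then use $(\sigma-s)^{-\kappa}\leq(\tau-s)^{-\kappa}$ on $[\tau,t]$ to obtain
$$\|I_1\|_r\leq C(\tau-s)^{-\kappa}(t-\tau)^{1-j/2-\beta}[g]_{q,1,\kappa}.$$
For $I_2$, direct application of \eqref{hoe-est} gives
$$\|I_2\|_r\leq C(t-\tau)^\mu\int_s^\tau(\tau-\sigma)^{-j/2-\beta-\mu}(\sigma-s)^{-\kappa}\,d\sigma\cdot[g]_{q,1,\kappa}\leq C(t-\tau)^\mu(\tau-s)^{1-j/2-\beta-\mu-\kappa}[g]_{q,1,\kappa},$$
the $\sigma$-integral being a convergent Beta integral thanks to $j/2+\beta+\mu<1$ and $\kappa<1$.

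It then remains to recast both bounds in the form of the right-hand side of \eqref{duha-hoe-est}. For $I_2$, one simply writes $(\tau-s)^{1-j/2-\beta-\mu-\kappa}=(\tau-s)^{-j/2-\beta-\mu}(\tau-s)^{1-\kappa}$ and uses $(\tau-s)^{1-\kappa}\leq({\mathcal T}-s)^{1-\kappa}$. For $I_1$ the bookkeeping is a little more delicate: one splits
$$(t-\tau)^{1-j/2-\beta}=(t-\tau)^\mu(t-\tau)^{1-j/2-\beta-\mu},\qquad (\tau-s)^{-\kappa}=(\tau-s)^{-j/2-\beta-\mu}(\tau-s)^{j/2+\beta+\mu-\kappa},$$
and bounds the two positive powers by powers of $({\mathcal T}-s)$, whose exponents sum to $1-\kappa$. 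The strict lower bound $\mu>\kappa-j/2-\beta$ in \eqref{duha-hoe-exp} is essential exactly to ensure $j/2+\beta+\mu-\kappa>0$, while the strict upper bound $\mu<1-j/2-\beta$ is what has already produced the integrability of both $I_1$ and $I_2$. Once these rearrangements are in place, adding the bounds on $I_1$ and $I_2$ yields \eqref{duha-hoe-est}. Since the argument is purely algebraic after invoking Theorems \ref{weighted-est} and \ref{hoelder}, there is no substantive obstacle; the only care required is precisely in this bookkeeping of exponents to bring both pieces into the uniform form involving the prefactor $({\mathcal T}-s)^{1-\kappa}$.
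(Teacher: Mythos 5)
Your proposal is correct and follows essentially the same route as the paper: the identical splitting of $\nabla^j v(t)-\nabla^j v(\tau)$ at $\sigma=\tau$, with the H\"older estimate \eqref{hoe-est} on $[s,\tau]$ and the smoothing estimate \eqref{sm} (equivalently \eqref{wei-est1} with $\alpha=0$) on $[\tau,t]$. The exponent bookkeeping you spell out for recasting both bounds with the prefactor $({\mathcal T}-s)^{1-\kappa}$ is exactly what the paper leaves implicit in the phrase ``these estimates at once imply \eqref{duha-hoe-est}.''
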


We remark that the coefficient 
$({\mathcal T}-s)^{1-\kappa}$ in the right-hand side of \eqref{duha-hoe-est} sometimes plays a bit role in applications,
see section \ref{sect-ns}.

Under further conditions
\begin{equation}
(\cdot-s)^\kappa g\in L^\infty(s,{\mathcal T};\, L^q_{2,\sigma}(D))
\label{ass2-force}
\end{equation}
\begin{equation}
g\in C((s,{\mathcal T}];\, L^q_\sigma(D)), \qquad
\|g(t)-g(\tau)\|_q
\leq C(t-\tau)^\mu (\tau-s)^{-\kappa-\mu}\qquad \Big(\frac{s+t}{2}<\tau<t\leq {\mathcal T}\Big)
\label{ass3-force}
\end{equation}
with some $\mu\in (0,1]$, $\kappa\in [0,1)$ 
and $C=C({\mathcal T})>0$, 
we find the following theorem which tells us that
the function $v(t)$ given by \eqref{duha} provides a strong solution to the inhomogeneous equation with the forcing term $g(t)$.
In \eqref{ass2-force} one asks more weight than the previous condition \eqref{ass1-force}.
The reason is related to the non-autonomous character
and stems only from \eqref{evo-semi-wh-0} for the whole space problem. 
On the other hand,
the reason why the weight is not needed in the H\"older estimate \eqref{ass3-force} 
is that we have less singularity with the weighted norm in \eqref{str-est}.
Notice that the singularity in \eqref{ass3-force} near $\tau=s$ is consistent with \eqref{ass2-force}.
The quantity relating to \eqref{ass3-force} is introduced as \eqref{g-hoe-semi} below, in which the supremum
is taken with respect to $t\in (s,{\mathcal T}]$ and $\sigma\in (\frac{s+t}{2},t)$.
\begin{theorem}
Suppose that $\eta$ and $\omega$ fulfill \eqref{ass1-rigid} for some $\theta\in (0,1]$.
Let $q\in (\frac{3}{2},\infty)$ and assume that, given $s$ and ${\mathcal T}$ with $0\leq s<{\mathcal T}<\infty$,
the function $g(t)$ fulfills \eqref{ass2-force}--\eqref{ass3-force} for some $\mu\in (0,1]$ and $\kappa\in [0,1)$.
Then the function $v(t)$ given by \eqref{duha} is of class
\begin{equation}
\begin{split}
&v\in C^1((s,{\mathcal T}];\, L^q_\sigma(D)), \qquad
v(t)\in Y_q(D)\quad \forall\,t\in (s,{\mathcal T}],  \\
&L(\cdot)v\in C((s,{\mathcal T}];\, L^q_\sigma(D))
\end{split}
\label{duha-cl}
\end{equation}
and satisfies
\begin{equation}
\frac{dv}{dt}+L(t)v=g(t), \qquad t\in (s,{\mathcal T}]
\label{duha-eq}
\end{equation}
in $L^q_\sigma(D)$.

For each $m\in (0,\infty)$ and  $R\in (1,\infty)$, there
are constants $C_1=C_1({\mathcal T},m,q,\mu,\kappa,\theta,D)>0$, $C_2=C_2({\mathcal T},m,q,\kappa,\theta,D)>0$ and 
$C_3=C_3({\mathcal T},m,q,R,\mu,\kappa,\theta,D)>0$ such that
\begin{equation}
\|\partial_tv(t)\|_q+\|\nabla^2v(t)\|_q 
\leq C_1(t-s)^{-\kappa} \big([g]_{q,2,\kappa}+\{g\}_{q,\mu,\kappa}\big)
\label{duha-str-est}
\end{equation}
\begin{equation}
\|\rho\nabla v(t)\|_q
\leq C_2(t-s)^{-\kappa+1/2}[g]_{q,1,\kappa}
\label{duha-drift-est}
\end{equation}
\begin{equation}
\|\partial_tv(t)\|_{q,D_R}+\|\nabla^2v(t)\|_{q,D_R}
\leq C_3(t-s)^{-\kappa} 
\big([g]_{q,1,\kappa}+\{g\}_{q,\mu,\kappa}\big)
\label{duha-str-loc-est}
\end{equation}
for all $t\in (s,{\mathcal T}]$ whenever \eqref{ever} is satisfied, where
\begin{equation}
\{g\}_{q,\mu,\kappa}:=
\sup_{\frac{s+t}{2}<\sigma<t\leq {\mathcal T}}\frac{(\sigma-s)^{\kappa+\mu} \|g(t)-g(\sigma)\|_q}{(t-\sigma)^\mu}
\label{g-hoe-semi}
\end{equation}
while $[g]_{q,\alpha,\kappa}$ is defined as \eqref{g-wei}.

If, in particular, the generator $L(t)=L$ is independent of $t$ (so that the evolution operator is the semigroup $e^{-tL}$),
then all these results above are available under less assumptions \eqref{ass1-force} and \eqref{ass3-force}, where
$[g]_{q,2,\kappa}$ is replaced by $[g]_{q,1,\kappa}$ in \eqref{duha-str-est},
and we have $v\in C((s,{\mathcal T}];\,W^{2,q}(D))$. 
\label{duha-strong}
\end{theorem}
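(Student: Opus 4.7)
The plan is a non-autonomous Tanabe-Sobolevskii argument combining the strong estimate \eqref{str-est}, the weighted smoothing \eqref{wei-est1}, and the explicit formula \eqref{repre-C1-alt} from the Hansel-Rhandi construction of $T(t,s)$ via the whole-space operator $U(t,s)$ and the bounded-domain operator $V(t,s)$. Starting from the difference quotient
\[
\frac{v(t+h)-v(t)}{h} = \frac{1}{h}\int_t^{t+h} T(t+h,\sigma)g(\sigma)\,d\sigma + \int_s^t \frac{T(t+h,t)-I}{h}\,T(t,\sigma)g(\sigma)\,d\sigma
\]
obtained via the semigroup property, the first term tends to $g(t)$ by \eqref{ass3-force} and the strong continuity of $T$. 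For the second term, the weighted bound \eqref{wei-est1} with $\alpha=1$ places $T(t,\sigma)g(\sigma)$ in $L^q_{1,\sigma}(D)$, so item 3 of Theorem \ref{strong-sol} gives pointwise convergence of the integrand to $-L(t)T(t,\sigma)g(\sigma)$; the task is then to show that $-L(t)v(t)$ is absolutely convergent and equal to the limit.

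To make this rigorous and simultaneously extract the estimates I use the Tanabe splitting
\[
L(t)v(t) = \int_s^t L(t)T(t,\sigma)\bigl[g(\sigma)-g(t)\bigr]\,d\sigma + L(t)\int_s^t T(t,\sigma)g(t)\,d\sigma.
\]
The first summand is controlled by pairing $\|AT(t,\sigma)\|\le C(t-\sigma)^{-1}$ from \eqref{A-evo} with either the $\mu$-H\"older estimate \eqref{ass3-force} on $(\frac{s+t}{2},t)$, whose factor $(t-\sigma)^\mu$ kills the singularity since $\mu>0$, or with $[g]_{q,2,\kappa}$ on $(s,\frac{s+t}{2})$ where $t-\sigma\geq \frac{t-s}{2}$; the drift part $(\omega(t)\times x)\cdot\nabla T(t,\sigma)$ is absorbed using \eqref{wei-est1} with $\alpha=1,\,j=1$ applied to the weight-one component of $g(\sigma)-g(t)$, and integration in $\sigma$ produces the $(t-s)^{-\kappa}\bigl([g]_{q,2,\kappa}+\{g\}_{q,\mu,\kappa}\bigr)$ contribution in \eqref{duha-str-est}. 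The second, constant-source summand is the technical core: the autonomous identity $\int_s^t L\,e^{-(t-\sigma)L}g(t)\,d\sigma = g(t)-e^{-(t-s)L}g(t)$ is unavailable here because the backward relation $\partial_\sigma T(t,\sigma) = T(t,\sigma)L(\sigma)$ demands an argument in $Y_q(D)$ and $L(t)\neq L(\sigma)$. My plan is to descend to the Hansel-Rhandi iteration, analyse $\partial_t$ of the corresponding Duhamel integral of $U(t,\sigma)$ via \eqref{repre-C1-alt}, and control the resulting commutator terms involving $\omega(t)-\omega(\sigma)$ paired with the drift coefficient $x$; each such commutator consumes one power of $\rho$, which is exactly why \eqref{ass2-force} demands $\rho^2 g\in L^q$ rather than only $\rho g\in L^q$, while the $V(t,\sigma)$ contribution on the bounded set is treated by the classical parabolic theory on $D_R$.

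Once $L(t)v(t)\in L^q_\sigma(D)$ is controlled, \eqref{duha-drift-est} follows by directly integrating the weighted smoothing \eqref{wei-est1} with $\alpha=1,\,j=1$, while \eqref{duha-str-loc-est} uses the local bound \eqref{str-local-est}, whose unweighted right-hand side explains why no weight is needed near the body. The class \eqref{duha-cl} and the equation \eqref{duha-eq} follow from a standard dominated-convergence argument together with the continuity furnished by Proposition \ref{so-far} and by the $\mu$-H\"older assumption \eqref{ass3-force}. The main obstacle is precisely the constant-source piece $L(t)\int_s^t T(t,\sigma)g(t)\,d\sigma$: opening up the Hansel-Rhandi iteration and rigorously deriving \eqref{repre-C1-alt} with tight control of the $\rho^2$-weighted commutator on $U(t,\sigma)$ is where the difficulty concentrates. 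For the autonomous remark at the end of the theorem, the integration-by-parts trick holds literally, the commutator disappears, and only $[g]_{q,1,\kappa}$ survives in \eqref{duha-str-est}; the additional continuity $v\in C((s,\mathcal{T}];W^{2,q}(D))$ then follows from the corresponding continuity of the semigroup noted in Proposition \ref{so-far} together with the absolute convergence established above.
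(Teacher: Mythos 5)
Your proposal follows essentially the same route as the paper: the midpoint Tanabe splitting $g(\sigma)=(g(\sigma)-g(t))+g(t)$ on $(\frac{s+t}{2},t)$, the descent to the Hansel--Rhandi parametrix and the representation \eqref{repre-C1-alt}, the frozen-coefficient comparison $U(t,\sigma)-e^{-(t-\sigma)L_{\mathbb R^3}(t)}$ whose commutator $\{(\omega(t)-\omega(\sigma))\times x\}\cdot\nabla$ is exactly what forces the $\rho^2$-weight in \eqref{ass2-force}, the classical parabolic treatment of the $V(t,\sigma)$ piece, and the same derivations of \eqref{duha-drift-est}, the local estimate, and the autonomous simplification. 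The only minor bookkeeping slip is that the far-from-diagonal integral over $(s,\frac{s+t}{2})$ needs only $[g]_{q,1,\kappa}$ rather than $[g]_{q,2,\kappa}$, which does not affect the validity of the argument.
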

\begin{remark}
In the existing literature such as Yagi \cite{Ya} on evolution operators of parabolic type, 
\eqref{ass3-force} for $s<\tau<t\leq {\mathcal T}$ with the additional condition
$\mu+\kappa<1$ is often imposed to deduce the analogous estimate to \eqref{duha-str-est}.
The condition is, however, improved as above by splitting the integral at $\sigma=\frac{s+t}{2}$, see
\eqref{repre-C1}--\eqref{repre-Lv-alt}.
This finding is not new; in fact, it was already found for the autonomous case in, for instance, \cite[Lemma 3.5]{Hi91}.
Even though \eqref{ass3-force} is replaced by the local H\"older continuity on $(s,{\mathcal T}]$
without any behavior near $\tau=s$, the regularity \eqref{duha-cl}
is still available, but the behavior \eqref{duha-str-est} for $\partial_tv(t)$ near the initial time is lost.
See also Remark \ref{rem-ns} below on this issue within the nonlinear context.
\label{rem-yagi}
\end{remark}
\begin{remark}
In the integrand of \eqref{duha}, as mentioned before, $T(t,\sigma)$ is regarded as the evolution operator 
not on $L^q_{2,\sigma}(D)$, see \eqref{sole-chara-wei}, but on $L^q_\sigma(D)$; thus, \eqref{ass2-force} makes sense
even for $q\leq 3$.
In many situations, nevertheless, we would have the forcing term of the form $g(t)=Ph(t)$ with $h(t)\in L^q_2(D)$ 
when applying Theorem \ref{duha-strong}.
In this case the condition $q\in (3,\infty)$ is actually needed through \eqref{FK-wei}.
But the condition $q\in (3,\infty)$ is never optimal for obtaining \eqref{duha-cl}--\eqref{duha-str-est}
under the assumptions \eqref{ass2-force}--\eqref{ass3-force}.
See also Remark \ref{rem-duha-alt} for further discussions.
\label{rem-duha0}
\end{remark}
\begin{remark}
We often face the H\"older estimate of the form
\[
\|g(t)-g(\tau)\|_q\leq \sum_{i=1}^l C_i(t-\tau)^{\mu_i}(\tau-s)^{-\kappa-\mu_i}
\]
for $s<\tau<t\leq {\mathcal T}$ with some constants $C_i\;(i=1,\cdots,l)$ when applying Theorem \ref{duha-strong}.
In fact, it is indeed the case in section \ref{sect-ns}.
Nevertheless, we may restrict ourselves to the case $\frac{s+t}{2}<\tau<t\leq {\mathcal T}$, for which the condition above
implies \eqref{ass3-force} with 
$\displaystyle{\mu=\min_{1\leq i\leq l}\mu_i}$.
\label{rem-hoe-multi}
\end{remark}
\begin{remark}
It is not clear whether $\nabla^2 v\in C((s,{\mathcal T}];\, L^q(D))$
in Theorem \ref{duha-strong} unless the equation is autonomous.
In view of \eqref{wei-hoe-wh}
below for the whole space problem,
we would expect the H\"older estimate of $\rho\nabla T(\cdot,s)f$ with values in $L^q(D)$ for $f\in L^q_{2,\sigma}(D)$
as long as $q\in (3,\infty)$,
which together with $L(\cdot)v\in C((s,{\mathcal T}];\, L^q_\sigma(D))$, see \eqref{duha-cl},
leads to $Av\in C((s,{\mathcal T}];\, L^q_\sigma(D))$ and thereby
$v\in C((s,{\mathcal T}];\, W^{2,q}(D))$.
There are some technical details to be filled in and to be discussed elsewhere.
\label{rem-duha1}
\end{remark}
\begin{remark}
In \cite[Lemmas 3.1 and 3.2]{Hi22} a modest regularity of the function $v(t)$ given by \eqref{duha}
was discussed without introducing any weight for the forcing term $g(t)$.
In fact, we have
$v\in C^1_w((s,{\mathcal T});\, W^{-1,q}(D_R))$
provided that $g\in BC_w((s,{\mathcal T});\, L^q_\sigma(D))$, $1<q<\infty$.
\label{rem-duha-sofar}
\end{remark}

Let us proceed to study the Navier-Stokes initial value problem \eqref{ns}.
In order to get a strong solution subject to estimate \eqref{ns-est2} 
below for $\partial_tv(t)$ near the initial time, see 
Remark \ref{rem-ns} as well as the latter half of Remark \ref{rem-yagi},
one has to make a bit further assumption as well as 
\eqref{ass3-rigid}:
\begin{equation}
|\eta^\prime(t)-\eta^\prime(\tau)|+|\omega^\prime(t)-\omega^\prime(\tau)|
\leq C(t-\tau)^\vartheta \tau^{-\gamma-\vartheta}\qquad \Big(\frac{t}{2}<\tau<t\leq {\mathcal T}\Big)
\label{ass4-rigid}
\end{equation}
with some $\vartheta\in (0,1]$, $\gamma\in [0,1)$
and $C=C({\mathcal T})>0$, where ${\mathcal T}\in (0,\infty)$ is arbitrary.
Here, $\gamma$ is the same as in \eqref{ass3-rigid}, but
the condition $\gamma+\vartheta<1$ is not needed as described in the first half of Remark \ref{rem-yagi}.
As is readily seen, \eqref{ass3-rigid} ensures that $\eta$ and $\omega$ are well-defined up to $t=0$ and that
$\eta,\,\omega\in C_{\rm loc}^{0,1-\gamma}([0,\infty);\, \mathbb R^3)$ with
\begin{equation}
|(\eta,\omega)|_{1-\gamma,1}\leq \frac{1}{1-\gamma}\sup_{0<t\leq 1}t^\gamma\big(|\eta^\prime(t)|+|\omega^\prime(t)|\big),
\label{imp-hoe}
\end{equation}
see \eqref{quan1} with, say, ${\mathcal T}=1$, for we intend to construct the Navier-Stokes flow on some interval
$[0,{\mathcal T}_0]$ with ${\mathcal T}_0\leq 1$.
By following \eqref{ass4-rigid}--\eqref{imp-hoe},
let us introduce the quantities
\begin{equation}
\begin{split}
&m_0:=|(\eta,\omega)|_{0,1}+\sup_{0<t\leq 1}t^\gamma\big(|\eta^\prime(t)|+|\omega^\prime(t)|\big),  \\
&m_1:=\sup_{\frac{t}{2}<\tau<t\leq 1}
\frac{\tau^{\gamma+\vartheta}\big(|\eta^\prime(t)-\eta^\prime(\tau)|+|\omega^\prime(t)-\omega^\prime(\tau)|\big)}{(t-\tau)^{\vartheta}}
+m_0+m_0^2,
\end{split}
\label{quan2}
\end{equation}
where the supremum is taken with respect to $t\in (0,1]$ and $\tau\in (\frac{t}{2},t)$ in the latter.

Having \eqref{obsta} in mind,
we fix a cut-off function $\phi\in C_0^\infty(B_2)$ such that $\phi=1$ in $B_1$ and set
\begin{equation}
\begin{split}
b(x,t)&=\frac{1}{2}\,\mbox{rot $\Big\{\phi(x)\big(\eta(t)\times x-|x|^2\omega(t)\big)\Big\}$}  \\
&=\phi(x)\big(\eta(t)+\omega(t)\times x\big)
+\nabla\phi(x)\times \big(\eta(t)\times x-|x|^2\omega(t)\big).
\end{split}
\label{lift}
\end{equation}
Then it is indeed a solenoidal lift of the rigid motion
\begin{equation}
\begin{split}
&b|_{\partial D}=\eta+\omega\times x, \qquad \mbox{div $b$}=0, \qquad b(t)\in C_0^\infty(B_2),  \\
&b\in C_{\rm loc}^{0,1-\gamma}([0,\infty);\,W^{2,q}(\mathbb R^3)), \qquad
\partial_tb\in C_{\rm loc}^{0,\vartheta}((0,\infty);\, L^q(\mathbb R^3))
\end{split}
\label{lift-prop}
\end{equation}
for every $q\in [1,\infty]$
subject to
\begin{equation}
\sup_{0\leq t\leq 1}
\|b(t)\|_{W^{2,q}(\mathbb R^3)}
+\sup_{0\leq \tau<t\leq 1}\frac{\|b(t)-b(\tau)\|_{W^{2,q}(\mathbb R^3)}}{(t-\tau)^{1-\gamma}}\leq Cm_0.
\label{lift-est}
\end{equation}
As to estimates of $\partial_tb(t)$ for later use, see \eqref{force-est} below.

We look for a solution to \eqref{ns} of the form
\[
u(x,t)=b(x,t)+v(x,t),
\]
then $v(t)$ should obey
\begin{equation}
\begin{split}
&\frac{dv}{dt}+L(t)v+P(v\cdot\nabla v+b\cdot\nabla v+v\cdot\nabla b)=PF(t), \qquad t\in (0,\infty),  \\
&v(0)=v_0:=u_0-b(0),
\end{split}
\label{ns-evo}
\end{equation}
where
\begin{equation}
F(t):=F_0(t)-\partial_tb, \qquad
F_0(t):=
\Delta b+(\eta+\omega\times x)\cdot\nabla b-\omega\times b-b\cdot\nabla b,
\label{lift-force}
\end{equation}
which satisfy the following by \eqref{FK-wei}, \eqref{ass4-rigid}--\eqref{lift} and \eqref{lift-est}:
\begin{equation}
\begin{split}
&\sup_{0<t\leq 1}t^\gamma\|\rho^\alpha P\partial_tb(t)\|_q\leq Cm_0, \qquad
\sup_{\frac{t}{2}<\tau<t\leq 1}\frac{\tau^{\gamma+\vartheta}\|P\partial_tb(t)-P\partial_\tau b(\tau)\|_{r}}{(t-\tau)^\vartheta}
\leq Cm_1, \\
&\sup_{0\leq t\leq 1}\|\rho^\alpha PF_0(t)\|_q 
+\sup_{0\leq \tau<t\leq 1}\frac{\|PF_0(t)-PF_0(\tau)\|_r}{(t-\tau)^{1-\gamma}}\leq C(m_0+m_0^2)
\end{split}
\label{force-est}
\end{equation}
for every $\alpha\in [0,3)$, $q\in (\frac{3}{3-\alpha},\infty)$ and $r\in (1,\infty)$.
By virtue of the item 3 of Proposition \ref{so-far}, the problem \eqref{ns-evo} is converted into the integral equation
\begin{equation}
v(t)=T(t,0)v_0+\int_0^t T(t,s)P\big(F-v\cdot\nabla v-b\cdot\nabla v-v\cdot\nabla b\big)(s)\,ds.
\label{ns-int}
\end{equation}

We are in a position to give our main result.
We notice by \eqref{sole-chara-wei} that
 the following condition on the initial perturbation $v_0$ from the lift \eqref{lift} is exactly rephrased as \eqref{ass-IC} in terms of
the initial velocity $u_0$ for \eqref{ns}. 
\begin{theorem}
Suppose that $\eta$ and $\omega$ fulfill \eqref{ass3-rigid} and \eqref{ass4-rigid} for some $\vartheta\in (0,1]$ and
$\gamma\in [0,1)$. 
Let $q\in (3,\infty)$.
For every $v_0\in L^q_{1,\sigma}(D)$, there is ${\mathcal T}_0 \in (0,1]$, dependent on $\|\rho v_0\|_q$ and $m_0$
(as well as $q,\,\gamma$ and $D$), see \eqref{quan2},
such that problem \eqref{ns-evo} admits a solution $v(t)$ on the interval $[0,{\mathcal T}_0]$, which is of class
\begin{equation}
\begin{split}
&v\in C([0,{\mathcal T}_0];\, L^q_\sigma(D))\cap C^1((0,{\mathcal T}_0];\, L^q_\sigma(D)), \\
&v(t)\in Y_q(D) \quad\forall\, t\in (0,{\mathcal T}_0],  \qquad
L(\cdot)v\in C((0,{\mathcal T}_0];\, L^q_\sigma(D))
\end{split}
\label{ns-cl}
\end{equation}
and enjoys
\begin{equation}
\sup_{0<t\leq {\mathcal T}_0}
\left(t^{(3/q-3/r)/2}\|\rho v(t)\|_r+t^{1/2+(3/q-3/r)/2}\|\rho\nabla v(t)\|_r\right)
\leq C\big(\|\rho v_0\|_q+1\big), 
\label{ns-est0}
\end{equation}
\begin{equation}
\lim_{t\to 0}\,t^{j/2+(3/q-3/r)/2}\|\rho\nabla^j v(t)\|_r=0, \quad j\in \{0,1\},\quad (j,r)\neq (0,q)
\label{ns-est1}
\end{equation}
for every $r\in [q,\infty]$ with some $C=C(m_0,q,r,\gamma,D)>0$,
\begin{equation}
\sup_{0<t\leq {\mathcal T}_0}\left(t\|\partial_tv(t)\|_q+t\|\nabla^2 v(t)\|_q\right)
\leq C\big(\|\rho v_0\|_q+m_1\big)
\label{ns-est2}
\end{equation}
with some $C=C(m_0,q,\vartheta,\gamma,D)>0$ as well as
the initial condition
\begin{equation}
\lim_{t\to 0}\|\rho\big(v(t)-v_0\big)\|_q=0,
\label{ns-ic}
\end{equation}
where the weight function $\rho$ is given by \eqref{weight}. 

Moreover, the solution $v(t)$ obtained above is unique within the class
\begin{equation}
v\in L^\infty(0,{\mathcal T}_0;\, L^q_\sigma(D)), \qquad
t^{1/2}\nabla v\in L^\infty(0,{\mathcal T}_0;\, L^q(D))
\label{cl-uni}
\end{equation}
satisfying \eqref{ns-int} in $L^q_\sigma(D)$.
\label{ns-thm}
\end{theorem}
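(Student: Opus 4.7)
The strategy is to solve the integral equation \eqref{ns-int} by a contraction-mapping argument in a Banach space tailored to the weighted smoothing estimates of Theorem \ref{weighted-est}, and then to upgrade the resulting mild solution to a strong one via Theorem \ref{duha-strong}. For ${\mathcal T}_0\in (0,1]$ to be fixed, let $X_{{\mathcal T}_0}$ be the space of $v:(0,{\mathcal T}_0]\to L^q_{1,\sigma}(D)$ with
\[
\|v\|_X := \sup_{0<t\leq {\mathcal T}_0}\Bigl(\|\rho v(t)\|_q + t^{3/(2q)}\|\rho v(t)\|_\infty + t^{1/2}\|\rho\nabla v(t)\|_q + t^{1/2+3/(2q)}\|\rho\nabla v(t)\|_\infty\Bigr) < \infty,
\]
and denote by $\Phi$ the right-hand side of \eqref{ns-int}. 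Theorem \ref{weighted-est} with $\alpha=1$ (the hypothesis $q>3/(3-\alpha)=3/2$ is ensured by $q>3$) gives $\|T(\cdot,0)v_0\|_X \leq C\|\rho v_0\|_q$, and \eqref{wei-small1}--\eqref{IC-1} yield the vanishing \eqref{ns-est1} and the initial condition \eqref{ns-ic} for this linear piece.

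For the quadratic nonlinearity, Hölder's inequality gives $\|\rho(v\cdot\nabla v)(s)\|_q \leq \|v(s)\|_\infty\|\rho\nabla v(s)\|_q \leq \|v\|_X^2\, s^{-1/2-3/(2q)}$, which is integrable near $s=0$ precisely because $q>3$; \eqref{lift-est} and \eqref{force-est} furnish analogous (and easier) bounds for $b\cdot\nabla v$, $v\cdot\nabla b$ and the lift forcing $PF$, the latter having at worst a $s^{-\gamma}$ singularity with $\gamma<1$. Applying \eqref{wei-est1} pointwise in $s$ with $\alpha=1$, $(j,r)\in\{0,1\}\times\{q,\infty\}$, and computing the resulting beta integrals produces
\[
\|\Phi v\|_X \leq C\|\rho v_0\|_q + C(m_0+m_0^2)\,{\mathcal T}_0^{\delta_0} + C\|v\|_X^2\,{\mathcal T}_0^{\delta_1},
\]
with $\delta_0 := \min(1-\gamma,\,1/2-3/(2q))>0$ and $\delta_1 := 1/2-3/(2q)>0$, together with the matching Lipschitz bound $\|\Phi v - \Phi\tilde v\|_X \leq C{\mathcal T}_0^{\delta_1}(\|v\|_X+\|\tilde v\|_X)\|v-\tilde v\|_X$. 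Choosing ${\mathcal T}_0$ small in terms of $\|\rho v_0\|_q$ and $m_0$ yields a unique fixed point $v\in X_{{\mathcal T}_0}$; interpolation in $L^r$ between $r=q$ and $r=\infty$ delivers \eqref{ns-est0}, the vanishing \eqref{ns-est1} transfers from the linear part by positivity of $\delta_0,\delta_1$, and \eqref{ns-ic} follows from \eqref{IC-1} and the $O({\mathcal T}_0^{\delta_0})$ bound on the Duhamel term in $L^q_1(D)$.

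To promote $v$ to the class \eqref{ns-cl} and derive \eqref{ns-est2}, I regard $v$ as the solution to \eqref{duha-eq} with forcing $g := P(F - v\cdot\nabla v - b\cdot\nabla v - v\cdot\nabla b)$ and invoke Theorem \ref{duha-strong}. Assumption \eqref{ass2-force} holds with $\kappa := \max(\gamma,\,1/2+3/(2q))\in [0,1)$: the estimate $\|\rho^2(v\cdot\nabla v)\|_q \leq \|\rho v\|_\infty\|\rho\nabla v\|_q \leq C\|v\|_X^2\, s^{-1/2-3/(2q)}$ handles the quadratic term (using $q>3$ once more), $b$ has compact support so $\rho^2 b$ is bounded, and \eqref{force-est} controls the $PF$-piece. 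For \eqref{ass3-force} I split
\[
v(t)\cdot\nabla v(t)-v(\tau)\cdot\nabla v(\tau) = \bigl(v(t)-v(\tau)\bigr)\cdot\nabla v(t) + v(\tau)\cdot\bigl(\nabla v(t)-\nabla v(\tau)\bigr)
\]
and estimate the two increments by combining Theorem \ref{hoelder} (applied to the linear part $T(t,0)v_0$) with Corollary \ref{duha-hoelder} (applied to the Duhamel part of $v$, for which $g$ is the forcing just controlled); the Hölder bound for $P\partial_tb$ comes from \eqref{ass4-rigid} and \eqref{force-est}. Theorem \ref{duha-strong} then delivers \eqref{ns-cl} and $\|\partial_tv(t)\|_q + \|\nabla^2 v(t)\|_q \leq Ct^{-\kappa}$ times the forcing semi-norms via \eqref{duha-str-est}; combining this with the linear contribution governed by \eqref{str-est}, whose $t^{-1}$-singular part is exactly $C\|\rho v_0\|_q$, and observing that $m_1$ enters only through the Hölder semi-norm of $P\partial_tb$, proves \eqref{ns-est2}.

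Uniqueness within \eqref{cl-uni} is obtained by a standard difference argument: if $v,\tilde v$ are two solutions in that class, Hölder's inequality, the Sobolev embedding $W^{1,q}\hookrightarrow L^\infty$ for $q>3$ (via the Gagliardo--Nirenberg inequality) and the smoothing bounds \eqref{sm} applied to \eqref{ns-int} yield a Gronwall-type inequality for $\sup_{0<t\leq t_*}(\|v-\tilde v\|_q + t^{1/2}\|\nabla(v-\tilde v)\|_q)$ with coefficient $O(t_*^\delta)$, $\delta>0$, forcing $v\equiv\tilde v$ on a small interval and then on all of $[0,{\mathcal T}_0]$ by iteration. I expect the principal obstacle to be the verification of the Hölder hypothesis \eqref{ass3-force} for the quadratic nonlinearity: one must feed the fixed point $v$, available a priori only in the norm $\|\cdot\|_X$, back into Corollary \ref{duha-hoelder} to obtain quantitative moduli of continuity of $v$ and $\nabla v$ in $L^q(D)$ near the initial time, while keeping the Hölder exponent $\mu$ in the restrictive range \eqref{duha-hoe-exp} compatible both with the $\kappa$ above and with the Hölder regularity of $\partial_tb$ supplied by \eqref{ass4-rigid}.
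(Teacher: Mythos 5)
Your proposal is correct and follows essentially the same route as the paper: a contraction argument based on the weighted smoothing estimates \eqref{wei-est1} (the paper uses the two-component norm \eqref{funct-sp} and recovers the remaining weighted bounds a posteriori in Lemma \ref{lem-Phi}, while you build all four into the norm), followed by verification of \eqref{ass2-force}--\eqref{ass3-force} for the nonlinear forcing via Theorem \ref{hoelder} and Corollary \ref{duha-hoelder} so that Theorem \ref{duha-strong} upgrades the mild solution, and a Fujita--Kato two-step iteration for uniqueness in the class \eqref{cl-uni}. The only cosmetic deviation is your single $\kappa=\max(\gamma,1/2+3/(2q))$ versus the paper's term-by-term choice of $\kappa$ for each forcing component, which makes no difference on a bounded time interval.
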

\begin{remark}
As described in the introductory section,
solely with the assumption \eqref{ass3-rigid}, a strong $L^q$-solution is still available subject to the properties in Theorem \ref{ns-thm}
except \eqref{ns-est2}, because the assumption 
$\eta^\prime,\,\omega^\prime\in C^{0,\vartheta}_{\rm loc}([\varepsilon,\infty);\,\mathbb R^3)$
allows us to show that the mild solution becomes a strong one $v\in C^1((\varepsilon,{\mathcal T}_0];\, L^q_\sigma(D))$
for every $\varepsilon >0$, however, the behavior of $\partial_tv(t)$ as $t\to 0$ is lost.
See the latter half of Remark \ref{rem-yagi} at the linearized stage.
In Theorem \ref{ns-thm} we have made further assumption \eqref{ass4-rigid} to find 
the desired estimate \eqref{ns-est2} as an application of
Theorem \ref{duha-strong}.
\label{rem-ns}
\end{remark}
\begin{remark}
It seems unlikely to get a strong $L^3$-solution when $v_0\in L^3_{1,\sigma}(D)$ since \eqref{FK-wei}
with $\alpha=2$ does not hold for $q=3$, see \eqref{ass2-force}.
The assumption \eqref{ass2-force} can be replaced by \eqref{ass1-force} in Theorem \ref{duha-strong} for the autonomous case
and, therefore, a strong $L^3$-solution is available for every $v_0\in L^3_{1,\sigma}(D)$
by considering the evolution for $t>\varepsilon$ as
in Remark \ref{rem-ns} when both $\eta$ and $\omega$ are constant vectors;
however, one cannot deduce \eqref{ns-est2} on account of \eqref{ass1-force} for $g=P(v\cdot\nabla v)$ merely with $\kappa=1$
when $q=3$.
That is also the case for the usual Navier-Stokes initial value problem with $\eta=\omega=0$. 
\label{rem-ns1}
\end{remark}
\begin{remark}
As a by-product of the weighted analysis,
it would be interesting to observe that \eqref{ns-est0} with $r=\infty$ exhibits a spatial-temporal behavior of the Navier-Stokes flow.
The other thing is that the latter part 
of the right-hand side of \eqref{ns-est0} comes from the force \eqref{lift-force}, 
which is controlled as \eqref{force-est}, 
but the H\"older estimate of $P\partial_tb(t)$ is not needed here.
Since $m_0$ is already involved in the constant $C$ through several estimates of the evolution operator $T(t,s)$,
the bound $\|\rho v_0\|_q+m_0+m_0^2$ may be written as $\|\rho v_0\|_q+1$.
\label{rem-ns2}
\end{remark}
\begin{remark}
The class \eqref{cl-uni} together with \eqref{ns-int} implies the initial condition
$\displaystyle{\lim_{t\to 0}\|v(t)-v_0\|_q=0}$.
Likewise, \eqref{ns-est0} and \eqref{ns-int} lead to \eqref{ns-ic} by virtue of \eqref{IC-1}.
\label{rem-ns3}
\end{remark}

Let us close this section with two remarks about the descriptions in the rest of the paper.

Firstly, we sometimes use estimates of the evolution operator $T(t,s)$
obtained by the second author \cite{Hi18, Hi20}, all of which were deduced uniformly in
$(t,s)$ with $0<t-s\leq {\mathcal T}$
under the condition \eqref{ass2-rigid} in those literature;
otherwise, one cannot discuss the issue
of \cite{Hi18, Hi20}, that is, the large time behavior.
It readily turns out without any change of the proof that the same estimates hold true for $0\leq s<t\leq {\mathcal T}$ under less
assumption \eqref{ass1-rigid}. 

Secondly, since the present paper is concerned with the regularity issue,
every estimate in the proof of propositions and lemmas
holds for all $(t,s)$ with $0\leq s<t\leq {\mathcal T}$, 
where ${\mathcal T}\in (0,\infty)$ is fixed
arbitrarily, although this will not be always mentioned explicitly.

\section{Some auxiliary results}
\label{sect-auxi}

In this section we collect some preparatory lemmas for later use.
Let us begin with the boundary value problem for the divergence equation
\begin{equation}
\mbox{div $w$}=f\quad\mbox{in $G$}, \qquad w|_{\partial G}=0,
\label{bvp-div}
\end{equation}
where $G$ is a bounded domain in $\mathbb R^n$.
This admits many solutions as long as $f$ possesses an appropriate regularity with the compatibility condition
$\int_Gf\,dx=0$.
Among them, a paticular solutuion discovered by Bogovskii \cite{Bog} 
is convenient to recover the solenoidal condition in cut-off
procedures on account of several fine properties of his solution.
The result needed in this paper is summarized in the following lemma, see \cite[Theorem 1]{Bog}, \cite[Theorem 2.4 (a)--(c)]{BorSo90}, 
\cite[Theorem III.3.3]{Ga-b}, \cite[Theorem 2.5]{GHH06-2} and the references therein.
\begin{lemma}
Let $G$ be a bounded domain in $\mathbb R^n$, $n\geq 2$, with Lipschitz boundary $\partial G$.
There exists a linear operator $\mathbb B_G: C_0^\infty(G)\to C_0^\infty(G)^n$ with the following properties:
For every $q\in (1,\infty)$ and integer $k\geq 0$, there is a constant $C=C(q,k,G)>0$ being invariant under dilation
of the domain $G$ such that
\begin{equation}
\|\nabla^{k+1}\mathbb B_Gf\|_{q,G}\leq C\|\nabla^kf\|_{q,G}
\label{bog-est1}
\end{equation}
and that $w=\mathbb B_Gf$ is a solution to \eqref{bvp-div} if $\int_Gf\,dx=0$.
The operator $\mathbb B_G$, that we call the Bogovskii operator, 
extends uniquely to a bounded operator from $W_0^{k,q}(G)$ to $W_0^{k+1,q}(G)^n$.
Furthermore, it also extends uniquely to a bounded operator from $W^{1,q^\prime}(G)^*$ to $L^q(G)^n$, namely,
\begin{equation}
\|\mathbb B_Gf\|_{q,G}\leq C\|f\|_{W^{1,q^\prime}(G)^*}
\label{bog-est2}
\end{equation}
with some constant $C=C(q,G)>0$, where $1/q^\prime+1/q=1$.  
\label{lem-bog}
\end{lemma}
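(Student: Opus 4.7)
The plan is to follow Bogovskii's classical construction in three stages: an explicit formula on a domain star-shaped with respect to a ball, a partition-of-unity extension to Lipschitz $G$, and a duality argument for the negative-norm bound \eqref{bog-est2}. Assume first $G^{\ast}$ is star-shaped with respect to a ball $B \subset G^{\ast}$, and fix $\omega \in C_0^\infty(B)$ with $\int \omega = 1$. For $f \in C_0^\infty(G^{\ast})$ define
\[
\mathbb{B}_{G^{\ast}} f(x) = \int_{G^{\ast}} N(x,y) f(y)\,dy,
\qquad
N(x,y) = \frac{x-y}{|x-y|^n} \int_{|x-y|}^{\infty} \omega\!\left(y + r \tfrac{x-y}{|x-y|}\right) r^{n-1}\,dr.
\]
A direct computation (change of variable $z = y + r(x-y)/|x-y|$ followed by integration by parts) gives $\mathrm{div}\, \mathbb{B}_{G^{\ast}} f = f - \omega \int_{G^{\ast}} f\,dy$, so \eqref{bvp-div} is solved whenever $\int f = 0$; the support property $\mathbb{B}_{G^{\ast}} f \in C_0^\infty(G^{\ast})$ follows from the star-shape condition applied to the $r$-integration.

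Next, the gradient $\nabla \mathbb{B}_{G^{\ast}} f$ splits as a principal-value singular integral whose kernel is homogeneous of degree $-n$ and satisfies the Calder\'on--Zygmund cancellation condition, plus a remainder with an integrable kernel. Hence $L^q$-boundedness for $q \in (1,\infty)$ follows from standard Calder\'on--Zygmund theory, giving \eqref{bog-est1} with $k = 0$. Commuting further derivatives through $N(x,y)$ preserves the singular structure and yields \eqref{bog-est1} for every $k \geq 0$. Dilation invariance of the constant is built in: scaling $x \mapsto \lambda x$ together with $\omega_\lambda = \lambda^{-n} \omega(\cdot/\lambda)$ preserves both $\int \omega_\lambda = 1$ and the homogeneity of $N$, so the bound depends on $G^{\ast}$ only through its shape, not its scale.

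For a general bounded Lipschitz $G$, the Lipschitz property allows a finite covering by subdomains $G_1, \dots, G_N$ each star-shaped with respect to a ball. Using a subordinate partition of unity, one writes $f = \sum_j f_j$ with $f_j$ supported in $G_j$ and $\int_{G_j} f_j = 0$; the telescoping flux-correction argument (moving the local mean through Bogovskii operators on overlapping intersections) yields such a decomposition with estimates compatible with \eqref{bog-est1}. Setting $\mathbb{B}_G f = \sum_j \mathbb{B}_{G_j} f_j$ then gives the full $W^{k,q}_0 \to W^{k+1,q}_0$ bound. The negative-norm estimate \eqref{bog-est2} follows by duality: for $g \in W^{1,q'}(G)$, $\langle \mathbb{B}_G f, g \rangle = \langle f, \mathbb{B}_G^{\ast} g \rangle$, and the adjoint kernel admits the same Calder\'on--Zygmund bound, so $\mathbb{B}_G^{\ast} : L^{q'}(G) \to W^{1,q'}(G)$ is bounded, yielding the claim.

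The main technical obstacle is verifying the Calder\'on--Zygmund cancellation for $\partial_{x_i} N(x,y)$: the $r$-integral in the kernel makes the $y$-dependence subtle, and one must carefully extract the piece homogeneous of degree $-n$ with vanishing spherical mean while bounding the remainder uniformly in the shape of $G^{\ast}$ so that the resulting constant is genuinely scale-invariant.
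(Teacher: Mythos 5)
The paper does not prove this lemma at all: it is quoted verbatim from the literature, with references to Bogovski\u\i's original paper, Borchers--Sohr, Galdi's monograph (Theorem III.3.3) and Geissert--Heck--Hieber for the negative-order estimate \eqref{bog-est2}. Your sketch reconstructs exactly the classical route those references follow (explicit kernel on a domain star-shaped with respect to a ball, Calder\'on--Zygmund theory for the gradient, decomposition of a Lipschitz domain into finitely many star-shaped pieces with a mean-zero splitting of $f$, and duality for the dual-space bound), so there is no divergence of method to report. Two points in your outline are stated too casually, though both are repairable by the standard arguments. First, for $k\geq 1$ you cannot simply ``commute further derivatives through $N(x,y)$'': the kernel $\nabla_x^{k+1}N$ is $O(|x-y|^{-n-k})$, which is too singular for Calder\'on--Zygmund theory, and the estimate $\|\nabla^{k+1}\mathbb B_Gf\|_{q,G}\leq C\|\nabla^kf\|_{q,G}$ with only the top-order seminorm on the right is obtained by first integrating by parts to transfer $k$ derivatives onto $f$, exploiting the relation between $\nabla_x N$ and $\nabla_y N$ in the star-shaped case. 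Second, the duality argument for \eqref{bog-est2} requires verifying that $\nabla_y N(x,y)$ (which carries an extra term involving $\nabla\omega$ from differentiating $\omega(y+s(x-y))$ in $y$) is again a Calder\'on--Zygmund kernel, so that $\mathbb B_G^{\ast}$ is bounded from $L^{q'}(G)$ into $W^{1,q'}(G)$; this is true but is precisely the content of the cited result of Geissert--Heck--Hieber, who in fact prove the stronger statement for a whole range of negative Sobolev orders.
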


Let $D\subset \mathbb R^3$ be the exterior domain under consideration with $C^{1,1}$-boundary such that 
\eqref{obsta} is fulfilled. 
We fix $R>r>1$ and take a cut-off function $\phi\in C_0^\infty(B_R)$ satisfying $\phi=1$ in $B_r$.
Let $1<q<\infty$.
Given $f\in L^q_\sigma(D)$, consider the functions
\begin{equation}
f_0=(1-\phi)f+\mathbb B_G[f\cdot\nabla\phi], \qquad
f_1=\phi f-\mathbb B_G[f\cdot\nabla\phi]
\label{cutoff}
\end{equation}
with $G=B_{R}\setminus \overline{B_1}$, where $\mathbb B_G[f\cdot\nabla\phi]$ is understood as its extension by 
setting zero outside $G$.
Since $\int_Gf\cdot\nabla\phi\,dx=0$, we have $f_0\in L^q_\sigma(\mathbb R^3)$ and $f_1\in L^q_\sigma(D_{R})$ by Lemma \ref{lem-bog}.
If, in particular, $f\in D_q(A)$, see \eqref{stokes}, then it is also obvious by Lemma \ref{lem-bog} that
$f_0\in W^{2,q}(\mathbb R^3)\cap L^q_\sigma(\mathbb R^3)$ and that $f_1\in D_q(A_{R})$, where $A_{R}$ denotes the 
Stokes operator on the bounded domain $D_{R}$:
\begin{equation}
D_q(A_R)=L^q_\sigma(D_R)\cap W^{1,q}_0(D_R)\cap W^{2,q}(D_R), \qquad A_Ru=-P_{D_R}\Delta u.
\label{st-bdd}
\end{equation}
Now the question is whether both $f_0$ and $f_1$ inherit a fractional regularity with exact order from $f$.
Lemma \ref{lem-frac} below gives an affirmative answer in terms of the fractional powers of the Stokes operators $A$ and $A_R$ as well as 
the Bessel potential space $H^{2\delta}_q(\mathbb R^3)=\big[L^q(\mathbb R^3), W^{2,q}(\mathbb R^3)\big]_\delta$
with $\delta\in (0,1)$.
Here and in what follows, $[\cdot,\cdot]_\delta$ stands for the complex interpolation functor.
Note that the fractional powers $A^\delta$ and $A_R^\delta$ are well defined since both Stokes operators are generators of
bounded analytic semigroups, see \cite{BorSo87, FS94, Gi}.
Moreover, their domains coincide with the complex interpolation spaces, respectively, through analysis of
pure imaginary powers due to \cite{Gi85, GiSo89}, where $0<\delta<1$:
\begin{equation}
D_q(A^\delta)=\big[L^q_\sigma(D), D_q(A)\big]_\delta, \qquad
D_q(A_R^\delta)=\big[L^q_\sigma(D_R), D_q(A_R)\big]_\delta. 
\label{frac-ci}
\end{equation}
The following lemma was found by \cite[Lemma 4.8]{Hi99}, that is based on \cite{Hi00}, 
when $q=2$.
Since the only point is indeed \eqref{frac-ci} together with the definition of the complex interpolation, 
the proof for the general case $q\in (1,\infty)$ is essentially the same
as in \cite[Lemma 7.2]{Hi00} (rather than \cite{Hi99} because this literature relies on the Heinz-Kato inequality)
and may be omitted.
The result will be employed in section \ref{sect-strong}.
\begin{lemma}
Let $q\in (1,\infty)$ and suppose that $f\in D_q(A^\delta)$ with $\delta\in (0,1)$.
Then the functions $f_0$ and $f_1$ defined by \eqref{cutoff} respectively satisfy
\[
f_0\in H^{2\delta}_q(\mathbb R^3)\cap L^q_\sigma(\mathbb R^3), 
\qquad
f_1\in D_q(A_R^\delta)
\]
subject to
\begin{equation}
\|f_0\|_{H^{2\delta}_q(\mathbb R^3)}\leq C_1\|f\|_{D_q(A^\delta)}, \qquad
\|A_R^\delta f_1\|_{q,D_R}\leq C_2\|f\|_{D_q(A^\delta)}
\label{est-data-0}
\end{equation}
with some constants $C_1=C_1(q,\delta,D)>0$ and $C_2=C_2(q,\delta,R,D)>0$.
\label{lem-frac}
\end{lemma}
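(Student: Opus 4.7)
The plan is to view the cut-off-plus-Bogovskii construction in \eqref{cutoff} as a pair of bounded linear maps
$\Phi_0\colon f\mapsto f_0$ and $\Phi_1\colon f\mapsto f_1$ acting on $L^q_\sigma(D)$, to verify their boundedness at both a ``zeroth'' and a ``second'' regularity level, and then to invoke complex interpolation via \eqref{frac-ci} to read off the desired fractional estimates. Since \eqref{frac-ci} identifies $D_q(A^\delta)$ and $D_q(A_R^\delta)$ as the complex interpolation spaces between the base solenoidal $L^q$-space and the full Stokes domain, the entire proof reduces to checking endpoint mapping properties.

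First I would verify the two endpoint mappings for $\Phi_0$. At the base level, the compatibility $\int_G f\cdot\nabla\phi\,dx=\int_D\mathrm{div}(\phi f)\,dx=0$, obtained from $\mathrm{div}\,f=0$ and $\nu\cdot f|_{\partial D}=0$ (see \eqref{sole-chara}), together with \eqref{bog-est1} for $k=0$ gives $\Phi_0\in\mathcal L(L^q_\sigma(D),L^q_\sigma(\mathbb R^3))$; the solenoidality of $f_0$ is built into the Bogovskii correction and the support properties of $\phi$. At the top level, if $f\in D_q(A)\subset W^{2,q}(D)$, then $(1-\phi)f\in W^{2,q}(\mathbb R^3)$ and \eqref{bog-est1} with $k=1$ yields $\mathbb B_G[f\cdot\nabla\phi]\in W^{2,q}_0(G)$, hence $\Phi_0\in\mathcal L(D_q(A),W^{2,q}(\mathbb R^3)\cap L^q_\sigma(\mathbb R^3))$. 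Completely analogous arguments, combined with the observation that $\phi f$ vanishes on $\partial D$ because $f\in D_q(A)\subset W^{1,q}_0(D)$ and that $\mathbb B_G[f\cdot\nabla\phi]$ is supported in $\overline G$ with zero trace on $\partial G$, give $\Phi_1\in\mathcal L(L^q_\sigma(D),L^q_\sigma(D_R))\cap\mathcal L(D_q(A),D_q(A_R))$.

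Next I apply the complex interpolation functor $[\cdot,\cdot]_\delta$ to both pairs. Using \eqref{frac-ci}, boundedness of $\Phi_1$ transfers immediately to
\[
\Phi_1\in\mathcal L\bigl([L^q_\sigma(D),D_q(A)]_\delta,\,[L^q_\sigma(D_R),D_q(A_R)]_\delta\bigr)=\mathcal L(D_q(A^\delta),D_q(A_R^\delta)),
\]
which is the second estimate in \eqref{est-data-0}. For $\Phi_0$ one gets
\[
\Phi_0\in\mathcal L\bigl(D_q(A^\delta),\,[L^q_\sigma(\mathbb R^3),W^{2,q}(\mathbb R^3)\cap L^q_\sigma(\mathbb R^3)]_\delta\bigr),
\]
and it remains to identify the target with $H^{2\delta}_q(\mathbb R^3)\cap L^q_\sigma(\mathbb R^3)$. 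I would do this by using the fact that $L^q_\sigma(\mathbb R^3)$ is a complemented subspace of $L^q(\mathbb R^3)$ via the Fujita-Kato projection $P_{\mathbb R^3}=I+\mathcal R\otimes\mathcal R$; a standard ``retraction-coretraction'' argument (Triebel's interpolation of complemented subspaces) then gives
\[
[L^q_\sigma(\mathbb R^3),W^{2,q}(\mathbb R^3)\cap L^q_\sigma(\mathbb R^3)]_\delta=[L^q(\mathbb R^3),W^{2,q}(\mathbb R^3)]_\delta\cap L^q_\sigma(\mathbb R^3)=H^{2\delta}_q(\mathbb R^3)\cap L^q_\sigma(\mathbb R^3),
\]
the last equality being the standard Bessel-potential characterization of complex interpolation on $\mathbb R^3$.

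The main obstacle is the bookkeeping in the second endpoint, specifically ensuring that the boundary traces behave well enough to land $\Phi_1(f)$ in $D_q(A_R)$ rather than merely in $W^{2,q}(D_R)\cap L^q_\sigma(D_R)$: one needs $f_1|_{\partial D_R}=0$, which requires the Dirichlet data to come from both the exterior part ($\phi f$ vanishes on $\partial D$ since $f\in W^{1,q}_0(D)$, and on $\partial B_R$ since $\mathrm{supp}\,\phi\subset B_R$) and the Bogovskii part (which has compact support inside $G$ and so vanishes on $\partial D_R$). Once these boundary conditions are confirmed at the top level, everything else is a direct application of interpolation, and the proof is complete.
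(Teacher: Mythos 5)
Your proof is correct and follows essentially the route the paper indicates: the paper omits the argument precisely because, as it notes, the whole content is the endpoint boundedness of $f\mapsto f_0$ and $f\mapsto f_1$ from $L^q_\sigma(D)$ and from $D_q(A)$ (already recorded just before the lemma) combined with \eqref{frac-ci} and the interpolation property of $[\cdot,\cdot]_\delta$, which is exactly what you carry out. Your retraction--coretraction identification of $[L^q_\sigma(\mathbb R^3),W^{2,q}(\mathbb R^3)\cap L^q_\sigma(\mathbb R^3)]_\delta$ is valid (since $P_{\mathbb R^3}$ is bounded on both $L^q(\mathbb R^3)$ and $W^{2,q}(\mathbb R^3)$) though slightly more than needed, as interpolating the inclusion maps already gives the one-sided embedding into $H^{2\delta}_q(\mathbb R^3)$ required for \eqref{est-data-0}.
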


$W^{2,q}$-estimate over $D_R$ 
in terms of the Stokes operator $A$ 
rather than $A_R$ is useful, see section \ref{sect-duha},
for deduction of estimate of $\|\nabla^2v(t)\|_{q,D_R}$ from \eqref{Av-local}.
This is the issue of the following lemma.
\begin{lemma}
Let $A$ be the Stokes operator on the exterior domain $D$
with $C^{1,1}$-boundary $\partial D$, see \eqref{stokes},
where \eqref{obsta} is fulfilled. 
Let $q\in (1,\infty)$ and $R\in (1,\infty)$, then there is a constant $C=C(q,R,D)$ such that
\begin{equation}
\|u\|_{W^{2,q}(D_R)}
\leq C\big(\|Au\|_{q,D_{2R}}+\|\nabla u\|_{q,G_R}+\|u\|_{q,G_R}\big)
\label{St-loc-est-0}
\end{equation}
for all $u\in D_q(A)$, where $G_R=D_{2R}\setminus \overline{D_R}$.
\label{St-local-0}
\end{lemma}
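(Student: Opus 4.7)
The plan is to localize $u$ via a cutoff supported in $B_{2R}$ and reduce the estimate to a global $W^{2,q}$-bound for the Stokes operator $A_{2R}$ on the bounded $C^{1,1}$-domain $D_{2R}$, handling the pressure that arises via the Helmholtz projection together with interior Stokes regularity in the annulus $G_R$.

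First I would fix $\phi\in C_0^\infty(B_{2R})$ with $\phi\equiv 1$ on a neighborhood of $\overline{B_R}$ and $\mbox{supp}\,\nabla\phi \subset\subset G:=B_{2R}\setminus\overline{B_R}$; by \eqref{obsta} one has $G_R=G$. Since $u\in L^q_\sigma(D)$ with $u|_{\partial D}=0$, the divergence theorem (the flux through $\partial B_R$ vanishes by solenoidality in $D_R$ and $u|_{\partial D}=0$) gives $\int_G u\cdot\nabla\phi\,dx=0$, so Lemma \ref{lem-bog} yields $w:=\mathbb B_G[u\cdot\nabla\phi]\in W^{2,q}_0(G)$ with
$$
\|w\|_{W^{2,q}(G)}\leq C\|u\cdot\nabla\phi\|_{W^{1,q}(G)}\leq C\bigl(\|u\|_{q,G_R}+\|\nabla u\|_{q,G_R}\bigr).
$$
Setting $v:=\phi u-w$, extended by zero outside $B_{2R}$, one checks that $v\in D_q(A_{2R})$ because $v$ is divergence-free, vanishes on $\partial D_{2R}=\partial D\cup\partial B_{2R}$, and lies in $W^{2,q}(D_{2R})$; see \eqref{st-bdd}.

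Next I would compute $A_{2R}v$. Let $p$ denote the pressure associated with $u\in D_q(A)$, so $-\Delta u+\nabla p=Au$ in $D$. Using $-\phi\nabla p=p\nabla\phi-\nabla(\phi p)$ and the fact that $P_{D_{2R}}$ annihilates gradients of $L^q_{\rm loc}(\overline{D_{2R}})$-functions, a direct calculation gives
$$
A_{2R}v = P_{D_{2R}}\bigl[\phi\,Au + (p-c)\nabla\phi - 2\nabla\phi\cdot\nabla u-(\Delta\phi)u+\Delta w\bigr]
$$
for every constant $c\in\mathbb R$. The delicate point is the term $(p-c)\nabla\phi$: since $\mbox{supp}\,\nabla\phi\subset\subset G_R$ and the Stokes system $-\Delta u+\nabla p=Au$, $\mbox{div}\,u=0$ holds in $G_R$ (which is disjoint from $\partial D$ by \eqref{obsta}), standard interior Stokes regularity furnishes some $c\in\mathbb R$ with
$$
\|p-c\|_{W^{1,q}(\mbox{supp}\,\nabla\phi)}\leq C\bigl(\|Au\|_{q,G_R}+\|u\|_{q,G_R}+\|\nabla u\|_{q,G_R}\bigr).
$$
Combining this with the $L^q$-boundedness of $P_{D_{2R}}$ and the Bogovskii bound on $w$ yields
$$
\|A_{2R}v\|_{q,D_{2R}}\leq C\bigl(\|Au\|_{q,D_{2R}}+\|\nabla u\|_{q,G_R}+\|u\|_{q,G_R}\bigr).
$$

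To conclude, the Cattabriga--Solonnikov $W^{2,q}$-theory for the Stokes system on the bounded $C^{1,1}$-domain $D_{2R}$ combined with the invertibility of $A_{2R}$ yields $\|v\|_{W^{2,q}(D_{2R})}\leq C\|A_{2R}v\|_{q,D_{2R}}$. Since $\phi\equiv 1$ on $D_R$, we have $u=(v+w)|_{D_R}$, and
$$
\|u\|_{W^{2,q}(D_R)}\leq \|v\|_{W^{2,q}(D_{2R})}+\|w\|_{W^{2,q}(G)}\leq C\bigl(\|Au\|_{q,D_{2R}}+\|\nabla u\|_{q,G_R}+\|u\|_{q,G_R}\bigr),
$$
which is \eqref{St-loc-est-0}. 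The main obstacle is the unavoidable appearance of the pressure upon localization, since $P_{D_{2R}}\circ(-\Delta)\neq (-\Delta)\circ P_{D_{2R}}$ in general; the resolution exploits the annular geometry of $G_R$ (disjoint from $\partial D$) to invoke interior Stokes regularity on $G_R$ and control $p$ modulo constants purely in terms of the right-hand side of \eqref{St-loc-est-0}.
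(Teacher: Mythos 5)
Your proof is correct and follows essentially the same route as the paper: the same Bogovskii-corrected cutoff $v=\phi u-\mathbb B_{G_R}[u\cdot\nabla\phi]$ reducing the claim to the $W^{2,q}$-solvability of the Stokes problem on the bounded domain $D_{2R}$ (phrased by you through $A_{2R}$ rather than the boundary value problem, which is equivalent). The only immaterial difference is the treatment of the pressure term $(\nabla\phi)p$: the paper normalizes $\int_{G_R}p\,dx=0$ and uses the elementary estimate $\|p\|_{q,G_R}\le C\|\nabla p\|_{W^{-1,q}(G_R)}\le C(\|Au\|_{q,G_R}+\|\nabla u\|_{q,G_R})$, whereas you invoke interior Stokes regularity in the annulus to bound $\|p-c\|_{W^{1,q}}$, which is heavier machinery than the $L^q$ control actually needed.
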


\begin{proof}
Given $u\in D_q(A)$, we set $h=Au$.
Then there is a pressure $p$ such that
\[
-\Delta u+\nabla p=h, \qquad\mbox{div $u$}=0\quad\mbox{in $D$};\quad
u|_{\partial D}=0
\]
where $p$ is singled out in such a way that
$\int_{G_R}p\,dx=0$, yielding
\begin{equation}
\|p\|_{q,G_R}\leq C\|\nabla p\|_{W^{-1,q}(G_R)}
\leq C\big(\|h\|_{q,G_R}+\|\nabla u\|_{q,G_R}\big).
\label{p-est}
\end{equation}
Fix a cut-off function $\phi\in C_0^\infty(B_{2R})$ 
satisfying $\phi=1$ on $B_R$, and consider
$v=\phi u-\mathbb B[u\cdot\nabla\phi]$ and $p_v=\phi p$
with $\mathbb B=\mathbb B_{G_R}$ being the Bogovskii operator defined in Lemma \ref{lem-bog}, 
which should obey
\[
-\Delta v+\nabla p_v=f, \qquad\mbox{div $v$}=0\quad\mbox{in $D_{2R}$};\quad
v|_{\partial D_{2R}}=0
\]
with
\[
f=\phi h-2\nabla\phi\cdot\nabla u-(\Delta \phi)u
+\Delta\mathbb B[u\cdot\nabla\phi]+(\nabla\phi)p
\]
fulfilling
\[
\|f\|_{q,D_{2R}}
\leq C\big(\|h\|_{q,D_{2R}}+\|\nabla u\|_{q,G_R}+\|u\|_{q,G_R}\big)
\]
by taking into account \eqref{p-est} as well as \eqref{bog-est1}.
This combined with estimate of $v$ in $W^{2,q}(D_{2R})$ in terms of
$f$ leads to \eqref{St-loc-est-0} since $u=v$ on $D_R$.
\end{proof}

Let us consider the autonomous regime in which $\eta$ and $\omega$ are constant vectors, so that the linear operator $L(t)=L$
given by \eqref{ig} is independent of $t$.
Then the evolution operator $T(t,s)=e^{-(t-s)L}$ is the semigroup, whose generation was proved first by
Geissert, Heck and Hieber \cite{GHH06} and then by Shibata \cite{Shi10}, where the methodology in those papers are 
completely different.
The regularity estimate
\begin{equation}
\|u\|_{W^{2,q}(D)}\leq C\big(\|Lu\|_{q}+\|u\|_{q}\big)
\label{ell-L}
\end{equation}
will be
employed in section \ref{sect-duha}
to show the continuity of $t\mapsto v(t)\in W^{2,q}(D)$ for \eqref{duha} with the semigroup.
When $q=2$ and $\eta=0$, \eqref{ell-L} was proved by the second author \cite{Hi99-0}. 
Later on, 
the proof of \eqref{ell-L} for $q\in (1,\infty)$ 
was given by Shibata \cite[Appendix]{Shi10} when $\eta$ and $\omega$ are parallel to each other.
Indeed it is possible to reduce the general case to his situation, but we will give a somewhat different proof of \eqref{ell-L} 
for completeness.
Given $u\in D_q(L)$, we set $f=(1+L)u$, which corresponds to the boundary value problem
\begin{equation}
\begin{split}
&u+{\mathcal L}u+\nabla p=f, \qquad\mbox{div $u$}=0 \quad\mbox{in $D$}, \\
&u|_{\partial D}=0, \qquad \lim_{|x|\to \infty}u=0,
\end{split}
\label{bvp-resol}
\end{equation}
where ${\mathcal L}$ is given by \eqref{pre-proj-0} being independent of $t$.
To see \eqref{ell-L},
it suffices to prove
\begin{equation}
\|u\|_{W^{2,q}(D)}+\|\nabla p\|_q
\leq C\|f\|_q
\label{apri-ext}
\end{equation}
for the solution to \eqref{bvp-resol} of the class \eqref{cl-resol} below, where $f$ is not necessarily solenoidal.
Note that \eqref{apri-ext} implies the estimate 
$\|(\omega\times x)\cdot\nabla u\|_q\leq C\|f\|_q$ as well from the equation.
It follows from the uniformly boundedness of the semigroup $e^{-tL}$ on $L^q_\sigma(D)$
obtained by \cite{Shi08} and then by \cite{Hi18} as well that
$1+L$ is invertible 
\begin{equation}
\|u\|_q=\|(1+L)^{-1}Pf\|_q\leq 
\int_0^\infty e^{-t}\|e^{-tL}Pf\|_q\,dt
\leq C\|f\|_q,
\label{inv}
\end{equation}
leading to the uniqueness of solutions to \eqref{bvp-resol} within the class \eqref{cl-resol}.
Notice that the constant $C=C(m,q,D)>0$ in \eqref{inv} is taken uniformly in $(\eta,\omega)$ with
$|\eta|+|\omega|\leq m$, because so is the constant $c_0=c_0(m,q,D)>0$ for
$\|e^{-tL}Pf\|_q\leq c_0\|f\|_q$.
The whole thing is thus the $L^q$-estimate of $\nabla^2 u$ for the equation
${\mathcal L}u+\nabla p=f$ in $\mathbb R^3$ due to \cite{Fa-tohoku, FHM, GK}, and \eqref{inv} makes the issue easier.
We show the following lemma.
\begin{lemma}
Suppose that $\eta$ and $\omega$ are constant vectors.
Let $q\in (1,\infty)$ and $m\in (0,\infty)$. 
Then the unique solution $u=(1+L)^{-1}Pf$ to the problem \eqref{bvp-resol} in the class, up to constants for the pressure,
\begin{equation}
u\in D_q(L), \qquad p\in L^q_{\rm loc}(\overline{D})\;\;\mbox{with $\nabla p\in L^q(D)$}
\label{cl-resol}
\end{equation}
enjoys \eqref{apri-ext} for all $f\in L^q(D)$ with some constant
$C=C(m,q,D)>0$ whenever $|\eta|+|\omega|\leq m$.
As a consequence, we have \eqref{ell-L} for all $u\in D_q(L)$ with some constant $C=C(m,q,D)>0$.
\label{lem-apri}
\end{lemma}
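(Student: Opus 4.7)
The plan is to combine the zeroth-order a priori estimate \eqref{inv} with a cut-off decomposition that splits the problem into a generalized Oseen problem on $\mathbb{R}^3$ and a perturbed Stokes problem on a bounded domain, and then to absorb the lower-order terms by interpolation on the bounded cut-off region.

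First, since $f = (1+L)u$ and $u \in D_q(L)$, the identity $Pf = (1+L)u$ together with \eqref{inv} gives $\|u\|_q \le C\|f\|_q$ with $C = C(m,q,D)$ (the crucial point is that the semigroup bound $\|e^{-tL}\|_{\mathcal{L}(L^q_\sigma)} \le c_0(m,q,D)$ is uniform in $(\eta,\omega)$ with $|\eta|+|\omega|\le m$). This also furnishes uniqueness in the class \eqref{cl-resol}. It remains to derive the second-order bound \eqref{apri-ext}; we may proceed assuming existence in the stated class, since our argument will only use that $u \in D_q(L)$ and $\nabla p \in L^q(D)$.

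Next, I would fix radii $R > r > 1$ and a cut-off $\phi \in C_0^\infty(B_R)$ with $\phi = 1$ on $B_r$, and decompose $u = u_0 + u_1$ precisely as in \eqref{cutoff}, with $p = p_0 + p_1$ defined by $p_0 = (1-\phi)p$ and $p_1 = \phi p - c$, the constant $c$ chosen so that $\nabla p$ splits accordingly on the overlap (the Bogovskii corrections ensure $\operatorname{div} u_0 = \operatorname{div} u_1 = 0$). A direct computation shows $u_0$ solves
\begin{equation*}
u_0 + \mathcal{L}u_0 + \nabla p_0 = F_0 \quad \text{in }\mathbb{R}^3,
\end{equation*}
where $F_0 = (1-\phi)f$ plus commutator terms involving $\nabla\phi$, $\Delta\phi$, the drift term $(\eta+\omega\times x)\cdot\nabla\phi$, and derivatives of $\mathbb{B}[u\cdot\nabla\phi]$; these commutators are all supported in the annulus $G = B_R\setminus \overline{B_1}$ and, by Lemma \ref{lem-bog}, satisfy $\|F_0\|_{q,\mathbb{R}^3} \le C(\|f\|_q + \|u\|_{W^{1,q}(G)})$ with $C=C(m,q,R,D)$ (the $m$-dependence enters through the drift). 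The generalized Oseen whole-space theory of \cite{Fa-tohoku, FHM, GK} then yields
\begin{equation*}
\|\nabla^2 u_0\|_{q,\mathbb{R}^3} + \|\nabla p_0\|_{q,\mathbb{R}^3} + \|(\omega\times x)\cdot\nabla u_0\|_{q,\mathbb{R}^3} \le C\bigl(\|F_0\|_{q,\mathbb{R}^3} + \|u_0\|_{q,\mathbb{R}^3}\bigr),
\end{equation*}
with $C=C(m,q)$. Similarly $u_1$ is supported in $D_R$ and solves a Stokes system on $D_R$ with Dirichlet boundary and right-hand side $F_1 = \phi f - (\omega\times x)\cdot\nabla u_1 + (\text{bounded drift terms}) + (\text{cut-off commutators})$; since $|x| \le R$ on $D_R$, the drift coefficients are bounded by $m$ times a constant depending on $R$, and the standard Stokes $W^{2,q}$ regularity on the bounded $C^{1,1}$ domain $D_R$ gives $\|u_1\|_{W^{2,q}(D_R)} + \|\nabla p_1\|_{q,D_R} \le C(\|f\|_q + \|u\|_{W^{1,q}(G)})$.

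Adding the two estimates and using the already-established $\|u\|_q \le C\|f\|_q$, I obtain
\begin{equation*}
\|\nabla^2 u\|_q + \|\nabla p\|_q \le C\bigl(\|f\|_q + \|\nabla u\|_{q,G}\bigr).
\end{equation*}
The final step is to absorb the intermediate derivative term. Since $G$ is bounded, the Gagliardo--Nirenberg interpolation inequality gives $\|\nabla u\|_{q,G} \le \varepsilon\|\nabla^2 u\|_{q,G'} + C_\varepsilon \|u\|_{q,G'}$ for any slightly enlarged bounded set $G' \subset D$, and choosing $\varepsilon$ small enough absorbs the term into the left-hand side, leaving the desired bound \eqref{apri-ext}. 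The elliptic estimate \eqref{ell-L} then follows because $\|u\|_{W^{2,q}(D)} + \|(\omega\times x)\cdot\nabla u\|_q \le C(\|Lu\|_q + \|u\|_q)$ by applying \eqref{apri-ext} to $f = (1+L)u$.

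The main obstacle is controlling the dependence of constants uniformly in $(\eta,\omega)$ with $|\eta|+|\omega|\le m$: both the whole-space generalized Oseen estimate and the bounded-domain Stokes estimate must be traced through carefully so that the constant depends only on $m$, $q$, and $D$ (and an auxiliary choice of $R$). This is what allows the final absorption to yield a clean a priori bound, and is the reason the authors single out the $m$-uniform semigroup estimate \eqref{inv} as the key input.
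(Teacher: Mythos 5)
Your proposal is correct and follows essentially the same route as the paper: the $m$-uniform bound \eqref{inv}, a Bogovskii-corrected cut-off splitting into a generalized Oseen problem on $\mathbb R^3$ (handled via \cite{Fa-tohoku, FHM, GK}) and a perturbed Stokes problem on a bounded domain (handled via \cite{FS94}), followed by absorption of the intermediate terms through interpolation. The only step you gloss over is the control of the pressure commutator $(\nabla\phi)p$ in the cut-off forcing, which requires normalizing $p$ on the overlap annulus $G$ and estimating $\|p\|_{q,G}\leq C\|\nabla p\|_{W^{-1,q}(G)}\leq C(\|f\|_{q,G}+\|u\|_{W^{1,q}(G)})$ from the equation, exactly as in the paper's \eqref{p-est2}.
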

 
\begin{proof}
We fix $R>1$, set $G=D_{2R}\setminus\overline{D_R}$ and single out the pressure $p$ associated with $u=(1+L)^{-1}Pf$
in such a way that $\int_{G}p\,dx=0$.
Then, as in \eqref{p-est}, via the first inequality there, we get
\begin{equation}
\|p\|_{q,G}\leq C\big(\|f\|_{q,G}+\|\nabla u\|_{q,G}+\|u\|_{q,G}\big)
\label{p-est2}
\end{equation}
from the equation.
Fix also the same cut-off function $\phi$ as in the proof of Lemma \ref{St-local-0}, and consider
\begin{equation*}
\begin{split}
&v=(1-\phi)u+\mathbb B[u\cdot\nabla \phi], \qquad p_v=(1-\phi)p, \\
&w=\phi u-\mathbb B[u\cdot\nabla\phi], \qquad p_w=\phi p,
\end{split}
\end{equation*}
where $\mathbb B=\mathbb B_G$.
Then they obey, respectively,
\begin{equation}
v+{\mathcal L}v+\nabla p_v=f_v, \qquad\mbox{div $v$}=0\quad\mbox{in $\mathbb R^3$}; \quad
\lim_{|x|\to\infty}v=0,
\label{wh-resol}
\end{equation}
\begin{equation}
w+{\mathcal L}w+\nabla p_w=f_w, \qquad \mbox{div $w$}=0\quad\mbox{in $D_{2R}$}; \quad
w|_{D_{2R}}=0,
\label{int-resol}
\end{equation}
with
\begin{equation*}
\begin{split}
&f_v=(1-\phi)f+g, \qquad
f_w=\phi f-g, \\
&g=2\nabla\phi\cdot\nabla u+\big[\Delta\phi+(\eta+\omega\times x)\cdot\nabla\phi\big]u
+(1+{\mathcal L})\mathbb B[u\cdot\nabla\phi]-(\nabla\phi)p.
\end{split}
\end{equation*}
It follows from 
\eqref{bog-est1},
\eqref{inv}, \eqref{p-est2}
and an interpolation inequality for $\|\nabla u\|_{q,G}$ that
\begin{equation}
\begin{split}
\|f_v\|_{q,\mathbb R^3}+\|f_w\|_{q,D_{2R}}
&\leq C\big(\|f\|_q+\|\nabla u\|_{q,G}+\|u\|_{q,G}\big)  \\
&\leq C_\varepsilon \|f\|_q+\varepsilon \|\nabla^2u\|_q
\end{split}
\label{cut-force}
\end{equation}
for every $\varepsilon>0$.

Let us recall some results on the $L^q$-estimates for the whole space problem \eqref{wh-resol}.
For the specific case $\omega=0$, it is just the classical Oseen (or even Stokes for $\eta=0$) problem, 
see \cite[Theorem VII.4.1]{Ga-b}.
It is conveninent to discuss the other case $\omega\neq 0$ through the Mozzi-Chasles transform \cite[Chapter VIII]{Ga-b};
then the problem \eqref{wh-resol} is reduced to the one with $\eta=0$ if $\omega\cdot\eta=0$, otherwise it is reduced to the case
when $\eta$ and $\omega$ are parallel to each other.
According to \cite{FHM} for the former and to \cite{Fa-tohoku,GK} for the latter, we have
\begin{equation}
\|\nabla^2 v\|_{q,\mathbb R^3}+\|\nabla p_v\|_{q,\mathbb R^3}\leq C\|f_v-v\|_{q,\mathbb R^3}\leq C\|f_v\|_{q,\mathbb R^3}
\label{apri-wh}
\end{equation}
with some constant $C>0$ independent of $(\eta,\omega)$
since the $L^q$-norm is invariant under the aforementioned transform which is just a translation 
$x\mapsto \widetilde x=x-\frac{\omega\times \eta}{|\omega|^2}$
and since
$\|v\|_{q,\mathbb R^3}\leq C\|f_v\|_{q,\mathbb R^3}$ by the same reasoning as in \eqref{inv}, 
where the associated semigroup is a specific case of the evolution operator \eqref{rep-wh}
in the next section.

As for the interior problem \eqref{int-resol}, we employ the $L^q$-estimate for the Stokes resolvent system 
(with the resolvent parameter $\lambda=1$), see Farwig and Sohr \cite[Theorem 1.2]{FS94}, to furnish
\begin{equation}
\begin{split}
\|\nabla^2w\|_{q,D_{2R}}+\|\nabla p_w\|_{q,D_{2R}}
&\leq C\|f_w+(\eta+\omega\times x)\cdot\nabla w-\omega\times w\|_{q,D_{2R}}  \\
&\leq C\|f_w\|_{q,D_{2R}}+\frac{1}{2}\|\nabla^2 w\|_{q,D_{2R}}
\end{split}
\label{apri-int}
\end{equation}
where the latter inequality follows from the same reason as in \eqref{cut-force}.

Since $u=v+w$ and $p=p_v+p_w$, we collect \eqref{cut-force}--\eqref{apri-int} 
and then take $\varepsilon>0$ small enough to conclude
\[
\|\nabla^2u\|_q+\|\nabla p\|_q\leq C\|f\|_q,
\]
which together with \eqref{inv} and an interpolation inequality lead us to \eqref{apri-ext}.
The proof is complete.
\end{proof}

Let us turn to the non-autonomous system.
For the construction of the evolution operator in Proposition \ref{so-far}, the next lemma on iterated 
convolutions plays an important role in \cite{HR14}.
Our proof of Theorems \ref{weighted-est} and \ref{strong-sol} is still based on this methodology.
We refer to \cite[Lemma 4.6]{GHH06}, \cite[Lemma 3.3]{HR11} and \cite[Lemma 5.2]{HR14}.
Notice that the same idea was essentially found in the classical theory, see \cite[Chapter 5, Sections 2 and 3]{Ta}.
\begin{lemma}
[\cite{GHH06, HR11, HR14}]
Let $X_1$ and $X_2$ be two Banach spaces, and let ${\mathcal T}\in (0,\infty)$. 
Suppose that there are constants $\beta,\, \gamma\in [0,1)$ and $c_*>0$ with the following property:
The families
\[
\{E_0(t,s);\, 0\leq s<t\leq {\mathcal T}\}\subset {\mathcal L}(X_1,X_2), \quad
\{Q(t,s);\, 0\leq s<t\leq {\mathcal T}\}\subset {\mathcal L}(X_1)
\]
fulfill
\[
\|E_0(t,s)\|_{{\mathcal L}(X_1,X_2)}\leq c_*(t-s)^{-\beta}, \qquad
\|Q(t,s)\|_{{\mathcal L}(X_1)}\leq c_*(t-s)^{-\gamma}
\]
for all $(t,s)$ with $0\leq s<t\leq {\mathcal T}$ as well as the strongly measurability of
$E_0(t,s)f$ and $Q(t,s)f$ in $(t,s)$ for every $f\in X_1$.
For $f\in X_1$ and $0\leq s<t\leq {\mathcal T}$, define a sequence $\{E_j(t,s)f\}_{j=0}^\infty$ by
\begin{equation}
E_{j+1}(t,s)f=\int_s^t E_j(t,\tau)Q(\tau,s)f\,d\tau\qquad (j=0,1,2,\cdots).
\label{ite-int}
\end{equation}
Then 
\[
E(t,s)f:=\sum_{j=0}^\infty E_j(t,s)f \qquad \mbox{in $X_2$}
\]
converges absolutely and uniformly in $(t,s)$ with
$0\leq s<t\leq {\mathcal T}$ and $t-s\geq \varepsilon$ for every $\varepsilon \in (0,{\mathcal T})$.
Moreover, there is a constant $C=C({\mathcal T}, c_*, \beta, \gamma)>0$ such that
\[
\|E(t,s)f\|_{X_2}\leq \sum_{j=0}^\infty \|E_j(t,s)f\|_{X_2}\leq C(t-s)^{-\beta}\|f\|_{X_1}
\]
for all $(t,s)$ with $0\leq s<t\leq {\mathcal T}$ and $f\in X_1$.
If, in particular, $\beta=0$, then the convergence of the series above is uniform in $(t,s)$ with $0\leq s\leq t\leq {\mathcal T}$
as long as $E_0(t,t)\in {\mathcal L}(X_1,X_2)$ is well-defined.
\label{lem-ite}
\end{lemma}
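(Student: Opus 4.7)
The plan is the standard Picard-iteration argument using the Beta function identity. First, by induction on $j$, I establish the pointwise bound
\begin{equation*}
\|E_j(t,s)f\|_{X_2}\leq c_*^{\,j+1}\,\Gamma(1-\beta)\,\Gamma(1-\gamma)^j\,\frac{(t-s)^{j(1-\gamma)-\beta}}{\Gamma\bigl(j(1-\gamma)+1-\beta\bigr)}\,\|f\|_{X_1}
\end{equation*}
for all $0\le s<t\le{\mathcal T}$ and $f\in X_1$. The case $j=0$ reduces to the hypothesis on $E_0$. For the inductive step I insert the bounds on $E_j$ and $Q$ into the defining recursion \eqref{ite-int}, note that $j(1-\gamma)+1-\beta>0$ and $1-\gamma>0$, and evaluate
\begin{equation*}
\int_s^t (t-\tau)^{j(1-\gamma)-\beta}(\tau-s)^{-\gamma}\,d\tau=(t-s)^{(j+1)(1-\gamma)-\beta}\,B\bigl(j(1-\gamma)+1-\beta,\,1-\gamma\bigr).
\end{equation*}
The identity $B(x,y)=\Gamma(x)\Gamma(y)/\Gamma(x+y)$ then produces a telescoping that advances the induction with $j\mapsto j+1$.

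Second, setting $z:=c_*\Gamma(1-\gamma)(t-s)^{1-\gamma}$ and summing gives
\begin{equation*}
\sum_{j=0}^\infty\|E_j(t,s)f\|_{X_2}\le c_*\,\Gamma(1-\beta)\,(t-s)^{-\beta}\,\|f\|_{X_1}\sum_{j=0}^\infty\frac{z^j}{\Gamma\bigl(j(1-\gamma)+1-\beta\bigr)}.
\end{equation*}
The inner series is of Mittag-Leffler type; since $1-\gamma>0$, Stirling's formula makes $\Gamma(j(1-\gamma)+1-\beta)$ grow super-geometrically in $j$, so the sum defines an entire function of $z$ and is bounded by a constant $M=M({\mathcal T},c_*,\beta,\gamma)$ on the interval $0\le z\le c_*\Gamma(1-\gamma){\mathcal T}^{1-\gamma}$. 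This yields the asserted estimate with $C=c_*\Gamma(1-\beta)M$.

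Third, strong measurability of each $E_j(\cdot,\cdot)f$ on the triangle $\{0\le s<t\le{\mathcal T}\}$ follows inductively from that of $E_0$ and $Q$ via the Bochner integral in \eqref{ite-int}. The uniform domination established above, combined with completeness of $X_2$, permits taking the partial-sum limit in $X_2$ and passes strong measurability to $E(t,s)f$ while delivering the norm bound. For any fixed $\varepsilon\in(0,{\mathcal T})$ the same tail estimate gives absolute and uniform convergence of the series on $\{(t,s):t-s\ge\varepsilon\}$. In the special case $\beta=0$, each summand bound is already continuous on the closed triangle $\{0\le s\le t\le{\mathcal T}\}$ provided $E_0(t,t)\in{\mathcal L}(X_1,X_2)$ is well defined, and the uniform geometric-in-$z$ control propagates uniform convergence down to the diagonal.

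The main obstacle is simply keeping the Gamma-function bookkeeping honest across the inductive step and verifying the super-geometric decay that makes the Mittag-Leffler series finite on compacts; everything else is dominated convergence and the routine observation that Bochner integrals of jointly strongly measurable, uniformly dominated integrands are strongly measurable in the outer parameters.
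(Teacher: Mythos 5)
Your proof is correct and is essentially the standard argument: the paper does not reprove Lemma \ref{lem-ite} but cites \cite{GHH06, HR11, HR14} and Tanabe's classical treatment, whose proofs are exactly your Gamma-function induction with the Beta-integral identity, the Mittag-Leffler-type summation, and the measurability remark the paper attributes to \cite[Proposition 1.1.28]{hy-b}. Nothing further is needed.
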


We note that the integrand $E_j(t,\tau)Q(\tau,s)f$ of \eqref{ite-int} is strongly measurable in $(t,\tau,s)$ as a $X_2$-valued 
function for every $f\in X_1$ under the assumptions (\cite[Proposition 1.1.28]{hy-b}).

The following elementary fact on the uniformly convergence will be needed in the proof of Theorem \ref{duha-strong},
see \eqref{unif-conv}.
\begin{lemma}
Let $q\in (1,\infty)$ and let
$\{T(t,s)\}_{t\geq s\geq 0}$ be the evolution operator on $L^q_\sigma(D)$ obtained in Proposition \ref{so-far}.
Then 
\begin{equation}
\lim_{\varepsilon\to 0+}\sup_{t\in J}\|T(t,t-\varepsilon)g(t-\varepsilon)-g(t)\|_q=0
\label{con-Tg}
\end{equation}
for every compact interval $J\subset (s,\infty)$ and $g\in C((s,\infty);\, L^q_\sigma(D))$, where $s\geq 0$. 
\label{lem-con}
\end{lemma}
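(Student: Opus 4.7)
The plan is to exploit the splitting
\[
T(t,t-\varepsilon)g(t-\varepsilon)-g(t)=T(t,t-\varepsilon)\bigl[g(t-\varepsilon)-g(t)\bigr]+\bigl[T(t,t-\varepsilon)g(t)-g(t)\bigr]
\]
and estimate the two pieces separately. Fix $\varepsilon_{0}\in(0,\min J-s)$ and restrict attention to $\varepsilon\in(0,\varepsilon_{0}]$, so that $t-\varepsilon$ stays in the compact interval $K:=[\min J-\varepsilon_{0},\max J]\subset(s,\infty)$.

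For the first piece I would apply the smoothing estimate \eqref{sm} with $(j,r)=(0,q)$, which yields a uniform bound $\|T(t,t-\varepsilon)\|_{\mathcal{L}(L^{q}_{\sigma}(D))}\leq C$ for all $t\in J$ and $\varepsilon\in(0,\varepsilon_{0}]$, combined with the uniform continuity of $g:K\to L^{q}_{\sigma}(D)$. These two facts immediately give
\[
\sup_{t\in J}\bigl\|T(t,t-\varepsilon)[g(t-\varepsilon)-g(t)]\bigr\|_{q}\leq C\sup_{t\in J}\|g(t-\varepsilon)-g(t)\|_{q}\longrightarrow 0
\qquad(\varepsilon\to 0+).
\]

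The heart of the argument is the second piece, where I would exploit the compactness of $g(J)\subset L^{q}_{\sigma}(D)$. Given $\delta>0$, cover $g(J)$ by finitely many open balls of radius $\delta$ centered at elements $f_{1},\dots,f_{N}\in L^{q}_{\sigma}(D)$. For each $t\in J$ pick an index $i(t)$ with $\|g(t)-f_{i(t)}\|_{q}<\delta$; then
\[
\|T(t,t-\varepsilon)g(t)-g(t)\|_{q}\leq(C+1)\delta+\max_{1\leq i\leq N}\|T(t,t-\varepsilon)f_{i}-f_{i}\|_{q}.
\]
For each fixed $i$, the continuity of $(t,\tau)\mapsto T(t,\tau)f_{i}$ on the closed region $\{t\geq\tau\geq 0\}$ stated in Proposition \ref{so-far}, together with $T(\tau,\tau)f_{i}=f_{i}$, implies that the map $(t,\varepsilon)\mapsto\|T(t,t-\varepsilon)f_{i}-f_{i}\|_{q}$ is continuous on the compact rectangle $J\times[0,\varepsilon_{0}]$ and vanishes on $\{\varepsilon=0\}$; hence its supremum over $t\in J$ tends to zero as $\varepsilon\to 0+$. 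Taking the maximum over the finite set $i=1,\dots,N$ preserves this, so letting $\varepsilon\to 0+$ first and then $\delta\to 0$ concludes the proof.

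The main potential obstacle is converting the pointwise-in-$f$ strong continuity of $T(t,s)$ provided by Proposition \ref{so-far} into an estimate uniform in $t\in J$; this is precisely what the total boundedness of $g(J)$ circumvents, reducing the uniform claim to finitely many continuous scalar functions on a compact set. No further subtlety is expected.
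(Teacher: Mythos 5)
Your proof is correct and follows essentially the same route as the paper's: both arguments combine the uniform bound on $\|T(t,t-\varepsilon)\|_{{\mathcal L}(L^q_\sigma(D))}$ from \eqref{sm}, the uniform continuity of $g$ on a compact interval, a reduction to finitely many fixed data (you cover the compact image $g(J)$ by $\delta$-balls, the paper picks finitely many sample points $g(t_j)$ — the same total-boundedness device), and the joint continuity of $(t,\tau)\mapsto T(t,\tau)f$ from Proposition \ref{so-far} to handle each fixed datum uniformly in $t$. The only difference is cosmetic bookkeeping in how the telescoping decomposition is written.
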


\begin{proof}
Let $\iota>0$ be arbitrarily small, and
fix a compact interval $J=[\tau_0,{\mathcal T}]\subset (s,\infty)$. 
One can take finitely many points $\{t_j\}_{j=1}^N\subset J$ 
as well as $\delta>0$ such that
\begin{equation}
\sup_{t\in J}\|g(t-\varepsilon)-g(t)\|_q\leq \iota \qquad 
\forall\,\varepsilon \in (0,\delta)
\label{con-0}
\end{equation}
and that,
for every $t\in J$, there is $k\in \{1,\cdots,N\}$ satisfying
\begin{equation}
\|g(t)-g(t_k)\|_q\leq \iota.
\label{con-1}
\end{equation}
For each $j\in \{1,\cdots,N\}$ we know 
from Proposition \ref{so-far}
the continuity of $(t,s)\mapsto T(t,s)g(t_j)$ uniformly on
$\{(t,s);\, 0\leq s\leq t\leq {\mathcal T}\}$. 
Hence there is $\varepsilon_j>0$ such that, if $0<\varepsilon<\varepsilon_j$, then
\begin{equation}
\sup_{t\in J}\|T(t,t-\varepsilon)g(t_j)-g(t_j)\|_q\leq \iota.
\label{con-2}
\end{equation}
Set $\varepsilon_*:=
\min\{\varepsilon_1,\cdots,\varepsilon_N,\delta\}$,
which is independent of $t\in J$, and let
$\varepsilon \in (0,\varepsilon_*)$.
Given $t\in J$, we take $k\in \{1,\cdots, N\}$ satisfying \eqref{con-1}, from which together with
\eqref{con-0}, \eqref{con-2} with $j=k$ and \eqref{sm} it follows that
\begin{equation*}
\begin{split}
&\quad \|T(t,t-\varepsilon)g(t-\varepsilon)-g(t)\|_q  \\
&\leq \|T(t,t-\varepsilon)\big( g(t-\varepsilon)-g(t)\big)\|_q
+\|T(t,t-\varepsilon)\big(g(t)-g(t_k)\big)\|_q  \\
&\quad +\|T(t,t-\varepsilon)g(t_k)-g(t_k)\|_q
+\|g(t_k)-g(t)\|_q  \\
&\leq C\iota,
\end{split}
\end{equation*}
yielding \eqref{con-Tg}.
The proof is complete.
\end{proof}

Let $1<q<\infty$ and $R>1$. We introduce the family of linear operators $\{L_R(t)\}_{t\geq 0}$ on $L^q_\sigma(D_R)$ by
\begin{equation}
\begin{split}
D_q(L_R(t))&=D_q(A_R), \\
L_R(t)u&=P_{D_R}{\mathcal L}(t)u=A_Ru-(\eta(t)+\omega(t)\times x)\cdot\nabla u+\omega(t)\times u,
\end{split}
\label{ig-bdd}
\end{equation}
where ${\mathcal L}(t)$ is the differential operator given by \eqref{pre-proj-0} and $A_R$ is the Stokes operator
defined by \eqref{st-bdd} on the bounded domain $D_R$. 
By the same reasoning as in \eqref{ig}, the Fujita-Kato projection $P_{D_R}$ is not needed in 
the lower order terms of \eqref{ig-bdd}.
For the interior problem, under the condition \eqref{ass1-rigid}, one can apply the general theory
of evolution operators of parabolic type, see Tanabe \cite[Chapter 5]{Ta}, to find that the family $\{L_R(t)\}_{t\geq 0}$ generates
an evolution operator $\{V(t,s)\}_{t\geq s\geq 0}$ on $L^q_\sigma(D_R)$.
For every $f\in L^q_\sigma(D_R)$, we know that $u(t)=V(t,s)f$ is of class
\begin{equation}
u\in C^1((s,\infty);\, L^q_\sigma(D_R))\cap C((s,\infty);\, D_q(A_R))\cap C([s,\infty);\, L^q_\sigma(D_R))
\label{reg-sob-bdd}
\end{equation}
and satisfies
\[
\frac{du}{dt}+L_R(t)u=0, \qquad t\in (s,\infty); \quad u(s)=f
\]
in $L^q_\sigma(D_R)$.
Most of results in the following lemma are found essentially in \cite{HR14, Hi18, Hi20}, while
\eqref{evo-semi-bdd-0} is well known in the theory of evolution operators of parabolic type.
It is possible to improve \eqref{evo-semi-bdd-0}
as in Remark \ref{rem-evo-semi}
below although this refinement does not play any role for our purpose.
For later use,
the item 3 of the following lemma provides us with estimates
for $f\in W^{1,q}(D_R)$ rather than $W^{1,q}_0(D_R)$. 
\begin{lemma}
Suppose that $\eta$ and $\omega$ fulfill \eqref{ass1-rigid} for some $\theta\in (0,1]$.
Let $q\in (1,\infty)$ and $R\in (1,\infty)$.

\begin{enumerate}
\item
Let $j\in \{0,1\}$ and $r\in [q,\infty]$.
For each ${\mathcal T}\in (0,\infty)$ and $m\in (0,\infty)$, there are constants
$C=C({\mathcal T},m,q,r,\theta,D_R)>0$ and $C^\prime=C^\prime({\mathcal T},m,q,\theta,D_R)>0$ such that
\begin{equation}
\|\nabla^j V(t,s)f\|_{r,D_R}\leq C(t-s)^{-j/2-(3/q-3/r)/2}\|f\|_{q,D_R}
\label{sm-bdd}
\end{equation}
\begin{equation}
\|p(t)\|_{q,D_R}+\|\partial_tV(t,s)f\|_{W^{-1,q}(D_R)}\leq C^\prime (t-s)^{-(1+1/q)/2}\|f\|_{q,D_R}
\label{pressure-bdd}
\end{equation}
\begin{equation}
\|L_R(t)\big(V(t,s)-e^{-(t-s)L_R(t)}\big)f\|_{q,D_R}
\leq C^\prime(t-s)^{-1+\theta}\|f\|_{q,D_R}
\label{evo-semi-bdd-0}
\end{equation}
for all $(t,s)$ with $0\leq s<t\leq {\mathcal T}$ and $f\in L^q_\sigma(D_R)$ whenever \eqref{ever} is satisfied,
where $p(t)$ is the pressure associated with $V(t,s)f$ and it is singled out subject to the side condition
$\int_{D_R}p(t)\,dx=0$.

\item
Let $\delta\in [0,1]$.
For each ${\mathcal T}\in (0,\infty)$ and $m\in (0,\infty)$,
there is a constant $C=C({\mathcal T},m,q,\delta,\theta,D_R)>0$ such that
\begin{equation}
\|V(t,s)f\|_{W^{2,q}(D_R)}+\|\partial_tV(t,s)f\|_{q,D_R}+\|\nabla p(t)\|_{q,D_R}
\leq C(t-s)^{-1+\delta}\|A_R^\delta f\|_{q,D_R}
\label{frac-bdd}
\end{equation}
for all $(t,s)$ with $0\leq s<t\leq {\mathcal T}$ and $f\in D_q(A_R^\delta)$
whenever \eqref{ever} is satisfied.

\item
Let $\delta\in (0,1/2q)$.
For each ${\mathcal T}\in (0,\infty)$ and $m\in (0,\infty)$, there is a constant 
$C=C({\mathcal T},m,q,\delta,\theta,D_R)>0$ such that
\begin{equation}
\|V(t,s)f\|_{W^{2,q}(D_R)}+\|\partial_tV(t,s)f\|_{q,D_R}+\|\nabla p(t)\|_{q,D_R}
\leq C(t-s)^{-1+\delta}\|f\|_{W^{1,q}(D_R)}
\label{high-bdd}
\end{equation}
\begin{equation}
\|V(t,s)f\|_{W^{1,q}(D_R)}\leq C(t-s)^{-1/2+\delta}\|f\|_{W^{1,q}(D_R)}
\label{1st-bdd}
\end{equation}
for all $(t,s)$ with $0\leq s<t\leq {\mathcal T}$ and $f\in L^q_\sigma(D_R)\cap W^{1,q}(D_R)$ whenever \eqref{ever} is satisfied.
\end{enumerate}
\label{lem-evo-bdd}
\end{lemma}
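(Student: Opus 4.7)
The plan is to treat $\{L_R(t)\}_{t\geq 0}$ as an analytic generator ($-A_R$) perturbed by the first-order drift and Coriolis terms, whose coefficients are uniformly H\"older continuous in $t$ by \eqref{ass1-rigid}. Since $D_q(L_R(t))=D_q(A_R)$ is independent of $t$, classical Tanabe-Sobolevskii theory applies (see \cite{Ta,Ya}) and yields the representation
\[
V(t,s) = e^{-(t-s)L_R(t)} + \int_s^t e^{-(t-\tau)L_R(t)}\, \Phi(\tau,s)\, d\tau,
\]
in which $\Phi(\tau,s)$ solves a Volterra equation with kernel controlled in $\mathcal L(L^q_\sigma(D_R))$ using $\|L_R(t)-L_R(\tau)\|_{\mathcal L(D_q(A_R),L^q_\sigma(D_R))}\leq C(t-\tau)^\theta$. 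From this representation one reads off \eqref{evo-semi-bdd-0} directly, while \eqref{sm-bdd} with $j=0$ is immediate from the boundedness of $e^{-\tau L_R(t)}$; the case $j=1$ follows from the analytic smoothing $\|A_R^{1/2} e^{-\tau A_R}\|_{\mathcal L}\leq C\tau^{-1/2}$ and the fact that the perturbation is first-order, the jump to $r\geq q$ being a free Sobolev embedding on the bounded domain $D_R$.

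For the pressure bound \eqref{pressure-bdd}, I would take $p(t)$ normalized by $\int_{D_R} p\,dx=0$, so that $\|p(t)\|_{q,D_R}\leq C\|\nabla p(t)\|_{W^{-1,q}(D_R)}$. Reading $\nabla p(t)$ from the equation $\partial_t V(t,s)f + L_R(t) V(t,s)f + \nabla p(t) = 0$ and testing against $W^{1,q'}_0(D_R)$ functions, the leading contribution comes from the $\Delta V(t,s)f$ term, which after integration by parts is controlled by $\|\nabla V(t,s)f\|_{q,D_R}$; combined with the refined $W^{1,q}$-smoothing on the bounded domain this yields the singularity $(t-s)^{-(1+1/q)/2}$, exactly as in \cite[Proposition~5.1]{Hi20}. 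The $W^{-1,q}$-bound on $\partial_t V(t,s)f$ then follows immediately from the equation.

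For item 2, by the complex interpolation identity \eqref{frac-ci} and the moment inequality, it suffices to establish $\|A_R V(t,s) f\|_{q,D_R} \leq C(t-s)^{-1+\delta}\|A_R^\delta f\|_{q,D_R}$. I would interpolate between the endpoints $\|A_R V(t,s)\|_{\mathcal L}\leq C(t-s)^{-1}$ (from splitting $V(t,s)=V(t,(t+s)/2)V((t+s)/2,s)$ and invoking \eqref{sm-bdd}) and $\|A_R V(t,s) A_R^{-1}\|_{\mathcal L}\leq C$, the latter holding uniformly on $0<t-s\leq {\mathcal T}$ because $V(t,s)$ commutes with $A_R$ modulo lower-order $L^q$-bounded operators that are controlled by \eqref{sm-bdd}. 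Once the $W^{2,q}$-bound is secured, the time derivative and the pressure gradient are extracted from the equation after estimating the drift terms by the smoothing estimate.

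The main obstacle is item 3, and specifically the embedding
\[
L^q_\sigma(D_R)\cap W^{1,q}(D_R) \;\hookrightarrow\; D_q(A_R^\delta) \qquad (\delta\in (0,1/2q)),
\]
without any boundary trace condition on the data. The point is that for $2\delta<1/q$ the trace onto $\partial D_R$ is not defined on $H^{2\delta}_q(D_R)$, so the Dirichlet condition in $D_q(A_R)$ disappears under the complex interpolation \eqref{frac-ci}, yielding $D_q(A_R^\delta)=H^{2\delta}_q(D_R)\cap L^q_\sigma(D_R)$ (the analogue of the fact used in Lemma \ref{lem-frac}); hence $\|A_R^\delta f\|_{q,D_R}\leq C\|f\|_{W^{1,q}(D_R)}$ for every $f\in L^q_\sigma(D_R)\cap W^{1,q}(D_R)$. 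Then \eqref{high-bdd} follows at once from \eqref{frac-bdd}, and \eqref{1st-bdd} is obtained by interpolating $\|V(t,s)f\|_{q,D_R}\leq C\|f\|_{q,D_R}$ against the $W^{2,q}$-bound just established via the Gagliardo-Nirenberg inequality on $D_R$.
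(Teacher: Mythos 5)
Your overall framework is the right one and is essentially the machinery the paper relies on through its citations: since $D_q(L_R(t))=D_q(A_R)$ is independent of $t$ and the coefficients are H\"older continuous in $t$, the Tanabe--Sobolevskii parametrix applies, \eqref{evo-semi-bdd-0} is precisely the estimate of Friedman \cite[Part 2, Lemma 7.1]{Fri}, and the crux of item 3 --- that $D_q(A_R^\delta)$ carries no boundary condition for $\delta\in(0,1/2q)$, so that $\|A_R^\delta f\|_{q,D_R}\leq C\|f\|_{W^{1,q}(D_R)}$ for merely $f\in L^q_\sigma(D_R)\cap W^{1,q}(D_R)$ --- is exactly the point the paper makes via \cite{Fu}, \cite{NS03} and \eqref{frac-ci}. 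The paper itself simply quotes \cite{HR14} for \eqref{sm-bdd} with $r<\infty$, \cite[Lemma 3.2]{Hi18} for \eqref{pressure-bdd}, \cite[(4.12) and Lemma 4.2]{Hi20} for \eqref{frac-bdd} and \eqref{high-bdd}--\eqref{1st-bdd}, and handles $r=\infty$ by Gagliardo--Nirenberg from \eqref{frac-bdd} with $\delta=0$; so you are rederiving what the paper only cites, but with the same tools.

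The one step in your sketch that would not deliver the stated estimate is \eqref{pressure-bdd}. If your integration by parts were legitimate it would yield the rate $(t-s)^{-1/2}$, since it reduces everything to $\|\nabla V(t,s)f\|_{q,D_R}$; that is strictly better than the claimed $(t-s)^{-(1+1/q)/2}$ and is not available. The obstruction you are passing over is that, after pairing $\partial_tV(t,s)f=-P_{D_R}{\mathcal L}(t)V(t,s)f$ with $\varphi\in W^{1,q'}_0(D_R)$, the projection must be moved onto the test function, and $P_{D_R,q'}\varphi$ lies in $W^{1,q'}(D_R)$ but not in $W^{1,q'}_0(D_R)$; the integration by parts of the Laplacian then produces a boundary contribution, and the remedy (as in \cite[Lemma 3.2]{Hi18}) is to pay with a fractional power $A_R^{1-\delta}V(t,s)f$ with $\delta<1/(2q')$, which is exactly the source of the extra factor $(t-s)^{-1/(2q)}$. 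Two smaller points: the endpoint $\|A_RV(t,s)f\|_{q,D_R}\leq C(t-s)^{-1}\|f\|_{q,D_R}$ in item 2 does not follow from the semigroup splitting together with \eqref{sm-bdd}, which only controls first derivatives --- it is the genuine parabolic estimate $\|A(t)U(t,s)\|\leq C(t-s)^{-1}$ of \cite{Ta,Fri}, and the phrase about $V(t,s)$ commuting with $A_R$ modulo lower order should be replaced by the uniform boundedness of $A_RV(t,s)A_R^{-1}$ from that theory; and your Gagliardo--Nirenberg derivation of \eqref{1st-bdd} produces the exponent $-1/2+\delta/2$, i.e.\ only the range $\delta\in(0,1/4q)$, which is harmless for every later application but weaker than stated (estimating $A_R^{1/2}V(t,s)A_R^{-\delta}$ directly by interpolation gives the full range).
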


\begin{proof}
$L^q$-$L^r$ smoothing estimate \eqref{sm-bdd} was shown by \cite{HR14} when $r<\infty$,
while \eqref{pressure-bdd} was deduced by \cite[Lemma 3.2]{Hi18}, see also \cite[Lemma 4.1]{Hi20}.
Estimate \eqref{frac-bdd} 
is found in \cite[(4.12)]{Hi20}.
By use of \eqref{frac-bdd} with $\delta=0$ and $q\in (3,\infty)$, the Gagliardo-Nirenberg inequality leads to \eqref{sm-bdd}
for $r=\infty$ and such $q$, however, even for $q\leq 3$ by the semigroup property.
Estimates \eqref{high-bdd}--\eqref{1st-bdd} are found in \cite[Lemma 4.2]{Hi20}; here, $f$ does not satisfy the boundary condition
$f|_{\partial D_R}=0$ and this is the reason why those estimates do not exhibit sharp behavior near $t=s$.
The condition $\delta\in (0,1/2q)$ ensures that $D(A_R^\delta)$ does not involve any boundary condition
(except the vanishing normal trace)
at $\partial D_R$, see \cite{Fu} and \cite[Section 2.3]{NS03} as well as \eqref{frac-ci}.
Finally, we refer to, for instance, Friedman \cite[Part 2, Lemma 7.1]{Fri} for \eqref{evo-semi-bdd-0}.
\end{proof}

A result on the uniformly convergence corresponding to Lemma \ref{lem-con} is also needed for the semigroup below
rather than the evolution operator $V(t,s)$, see \eqref{unif-conv}.
\begin{lemma}
Suppose that $\eta$ and $\omega$ fulfill \eqref{ass1-rigid} for some $\theta\in (0,1]$.
Let $q\in (1,\infty)$ and $R\in (1,\infty)$. 
Then we have
\begin{equation}
\lim_{\varepsilon\to 0+}\sup_{t\in J}\|e^{-\varepsilon L_R(t)}g(t)-g(t)\|_{q,D_R}=0
\label{con-LRg}
\end{equation}
for every compact interval $J\subset (s,\infty)$ and $g\in C((s,\infty);\, L^q_\sigma(D_R))$, where $s\geq 0$.
\label{lem-con-bdd}
\end{lemma}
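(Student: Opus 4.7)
The plan is to mimic the proof of Lemma \ref{lem-con}, the key additional ingredient being a \emph{uniform}-in-$t$ strong continuity at $\varepsilon=0$ of the family of analytic semigroups $\{e^{-\varepsilon L_R(t)}\}_{t\in J}$, which replaces the joint continuity of $(t,s)\mapsto T(t,s)g(t_j)$ that was used there. Fix a compact interval $J=[\tau_0,{\mathcal T}]\subset (s,\infty)$ and $\iota>0$. By continuity of $g$ on $J$, the image $g(J)$ is relatively compact in $L^q_\sigma(D_R)$, so we may choose finitely many points $t_1,\dots,t_N\in J$ such that for every $t\in J$ there exists $k=k(t)\in\{1,\dots,N\}$ with $\|g(t)-g(t_k)\|_{q,D_R}\leq \iota$.

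I would next establish two uniform properties of the semigroup family. First, the uniform boundedness
\[
\sup_{t\in J,\;\varepsilon\in[0,1]}\|e^{-\varepsilon L_R(t)}\|_{{\mathcal L}(L^q_\sigma(D_R))}\leq M
\]
for some $M=M(J,q,R,D)$. This follows by viewing $L_R(t)=A_R+B(t)$, where $B(t)u=-(\eta(t)+\omega(t)\times x)\cdot\nabla u+\omega(t)\times u$ is, on the bounded domain $D_R$, a lower-order perturbation of $A_R$ with $A_R$-bound $0$ and coefficient bounded uniformly on $J$ by $|(\eta,\omega)|_{0,{\mathcal T}}$; standard perturbation theory for analytic semigroups then yields the uniform bound. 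Second, the uniform strong continuity
\[
\lim_{\varepsilon\to 0+}\sup_{t\in J}\|e^{-\varepsilon L_R(t)}h-h\|_{q,D_R}=0
\]
for each fixed $h\in L^q_\sigma(D_R)$. For $h\in D_q(A_R)\subset D_q(L_R(t))$ this is immediate from
\[
\|e^{-\varepsilon L_R(t)}h-h\|_{q,D_R}=\Big\|\int_0^\varepsilon e^{-\sigma L_R(t)}L_R(t)h\,d\sigma\Big\|_{q,D_R}\leq M\varepsilon\sup_{t\in J}\|L_R(t)h\|_{q,D_R},
\]
where $\sup_{t\in J}\|L_R(t)h\|_{q,D_R}\leq C(J,R)\|h\|_{W^{2,q}(D_R)}$ since $|(\omega(t)\times x)|\leq R|\omega(t)|$ on $D_R$. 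The extension to a general $h\in L^q_\sigma(D_R)$ is by density of $D_q(A_R)$ combined with the uniform bound $M$.

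With these tools in hand, given $t\in J$ and the corresponding $k=k(t)$, I would decompose
\[
e^{-\varepsilon L_R(t)}g(t)-g(t)=e^{-\varepsilon L_R(t)}\big(g(t)-g(t_k)\big)+\big(e^{-\varepsilon L_R(t)}g(t_k)-g(t_k)\big)+\big(g(t_k)-g(t)\big).
\]
The first and third terms are bounded by $(M+1)\iota$ uniformly in $t\in J$ and $\varepsilon\in(0,1]$. For the middle term, applying the uniform strong continuity to each of the finitely many vectors $g(t_1),\dots,g(t_N)$ produces an $\varepsilon_*>0$ such that
\[
\max_{1\leq k\leq N}\sup_{t\in J}\|e^{-\varepsilon L_R(t)}g(t_k)-g(t_k)\|_{q,D_R}\leq\iota\qquad(0<\varepsilon<\varepsilon_*),
\]
and taking $k=k(t)$ pointwise yields the same bound for the middle term. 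Altogether $\sup_{t\in J}\|e^{-\varepsilon L_R(t)}g(t)-g(t)\|_{q,D_R}\leq (M+2)\iota$ for $\varepsilon\in(0,\varepsilon_*)$, and since $\iota$ is arbitrary this proves \eqref{con-LRg}. The main obstacle is the second uniform property, since the ``obvious'' strong continuity holds pointwise in $t$ only, and converting it into uniformity in $t$ is what requires the explicit Duhamel formula combined with the density argument and uniform bound $M$; once this is in place, the rest of the argument is a routine adaptation of Lemma \ref{lem-con}.
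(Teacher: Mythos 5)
Your proof is correct and follows essentially the same route as the paper: establish the uniform boundedness of $\{e^{-\varepsilon L_R(t)}\}$ and the uniform-in-$t$ strong continuity at $\varepsilon=0$ (via the Duhamel formula on a dense class plus approximation), then run the finite-net argument of Lemma \ref{lem-con}. The only cosmetic difference is that you justify the uniform bound by relative-bound-zero perturbation of $A_R$, whereas the paper invokes the uniform resolvent estimates behind the Dunford integral in Tanabe's construction; these amount to the same thing.
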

 
\begin{proof}
Fix ${\mathcal T}\in (0,\infty)$ arbitrarily.
Once we have
\begin{equation}
\sup\left\{
\|e^{-\varepsilon L_R(t)}\|_{{\mathcal L}(L^q_\sigma(D_R))};\,
0\leq t\leq {\mathcal T},\, 0\leq\varepsilon\leq 1\right\}<\infty,
\label{auxi0-bdd}
\end{equation}
\begin{equation}
\lim_{\varepsilon\to 0+}\sup_{0\leq t\leq {\mathcal T}}\|e^{-\varepsilon L_R(t)}f-f\|_{q,D_R}=0\qquad\forall\, f\in L^q_\sigma(D_R),
\label{auxi1-bdd}
\end{equation}
the same argument as in the proof of Lemma \ref{lem-con} works well to conclude \eqref{con-LRg}.
The uniformly boundedness \eqref{auxi0-bdd} is verified 
through the Dunford integral of the resolvent $(\lambda +L_R(t))^{-1}$ under the condition \eqref{ass1-rigid};
indeed, this is observed at the first stage of the construction of the evolution operator of parabolic type (\cite{Ta}).
It thus suffices to show \eqref{auxi1-bdd}. 
Let $f\in C_{0,\sigma}^\infty(D_R)\subset D_q(A_R)=D_q(L_R(t))$, then \eqref{auxi0-bdd} implies that
\begin{equation}
\begin{split}
\|e^{-\varepsilon L_R(t)}f-f\|_{q,D_R}
&\leq \int_0^\varepsilon \|e^{-sL_R(t)}L_R(t)f\|_{q,D_R}\,ds  \\
&\leq C\varepsilon\|A_R f-(\eta(t)+\omega(t)\times x)\cdot\nabla f+\omega(t)\times f\|_{q,D_R}  \\
&\leq C\varepsilon \|f\|_{W^{2,q}(D_R)} 
\end{split}
\label{auxi2-bdd}
\end{equation}
with some $C>0$ independent of $t\in [0,{\mathcal T}]$.
By the standard approximation procedure with use of \eqref{auxi0-bdd}, we are led to the desired consequence.
The proof is complete.
\end{proof}

The following lemma with $\alpha=1$ was shown by \cite[Lemma 5.2]{Hi20}, in which there was a typo;
on the 6th line of the proof there, $\phi\in C^\infty_0(D_L)$ should be replaced by $\phi\in C^\infty_0(B_L)$.
For general case $\alpha\in [0,3)$, we use the boundedness of the Riesz transform
\begin{equation}
\|\rho^\alpha{\mathcal R}h\|_{q,\mathbb R^3}\leq C\|\rho^\alpha h\|_{q,\mathbb R^3}
\label{riesz}
\end{equation}
for all $h\in L^q_\alpha(\mathbb R^3)$ provided $\frac{3}{3-\alpha}<q<\infty$.
Since the other parts of the argument based on a cut-off procedure are the same as in the proof of 
Lemma 5.2 (see also the description just before this lemma) of
\cite{Hi20}, one may omit the proof.
\begin{lemma}
Let $\alpha\in [0,3)$ and $q\in (\frac{3}{3-\alpha},\infty)$.
Then there is a constant $C=C(\alpha,q,D)>0$ such that
\begin{equation}
\|\rho^\alpha \nabla Pg\|_q\leq C\big(\|\rho^\alpha \nabla g\|_q+\|g\|_q\big)
\label{key-20}
\end{equation}
for all 
$g\in L^q(D)$ with $\nabla g\in L^q_\alpha(D)$,
where the weight function $\rho$ is given by \eqref{weight}.
\label{lem-proj-wei}
\end{lemma}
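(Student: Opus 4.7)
The plan is to mirror the argument of \cite[Lemma 5.2]{Hi20}, which handled the case $\alpha=1$, and simply substitute the single-weight Riesz bound by its general form \eqref{riesz}. The hypothesis $\alpha\in[0,3)$, $q>3/(3-\alpha)$ is used in exactly one spot: it is equivalent to $\rho^{\alpha q}\in\mathcal A_q(\mathbb R^3)$, which in turn is what makes \eqref{riesz} available.

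Write $Pg=g-\nabla p$, where $p$ is the weak solution of the Neumann problem $\Delta p=\mathrm{div}\,g$ in $D$ with $\partial_\nu p=\nu\cdot g$ on $\partial D$, so that the desired estimate becomes
$$\|\rho^\alpha\nabla^2p\|_q\le C\big(\|\rho^\alpha\nabla g\|_q+\|g\|_q\big).$$
Fix $L\ge 3$ with $\mathbb R^3\setminus D\subset B_{L-1}$ and choose $\phi\in C_0^\infty(B_L)$ with $\phi=1$ on $B_{L-1}$, and split $p=\phi p+(1-\phi)p$. The inner piece $\phi p$ is supported in $D_L$, where $\rho^\alpha$ is bounded. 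Thus the unweighted $L^q$-theory for the weak Neumann problem on the bounded $C^{1,1}$-domain $D_L$, together with the standard Helmholtz bound $\|\nabla p\|_q\le C\|g\|_q$ on $D$, yields
$$\|\rho^\alpha\nabla^2(\phi p)\|_q\le C\|\nabla^2(\phi p)\|_{q,D_L}\le C\big(\|\nabla g\|_{q,D_L}+\|g\|_q\big).$$

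For the outer piece $q_1:=(1-\phi)p$, extended trivially by zero in $B_{L-1}$, one checks that in $\mathbb R^3$
$$\Delta q_1=(1-\phi)\,\mathrm{div}\,g+R,$$
where the remainder $R$ involves derivatives of $\phi$ acting on $p$ and is supported in the annulus $A:=B_L\setminus B_{L-1}$. A Bogovskii correction on $A$ (after checking the zero-mean condition from the Neumann data) turns $R$ into a pure divergence, rewriting the above as $\Delta q_1=\mathrm{div}\,\tilde g$ in $\mathbb R^3$ with $\tilde g=(1-\phi)g+\mathbb B_A[R]$ satisfying
$$\|\rho^\alpha\nabla\tilde g\|_{q,\mathbb R^3}\le C\big(\|\rho^\alpha\nabla g\|_q+\|g\|_q\big),$$
the lower-order contributions being supported in $A$ where the weight is harmless. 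Since $\nabla^2q_1$ is then a matrix of second-order Riesz transforms applied to components of $\tilde g$, applying \eqref{riesz} componentwise gives
$$\|\rho^\alpha\nabla^2q_1\|_{q,\mathbb R^3}\le C\|\rho^\alpha\nabla\tilde g\|_{q,\mathbb R^3}\le C\big(\|\rho^\alpha\nabla g\|_q+\|g\|_q\big),$$
and combining the two estimates finishes the proof.

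The only delicate point, and the place where the proof of \cite[Lemma 5.2]{Hi20} actually does its work, is the Bogovskii correction just described: $R$ involves the unknown $p$ rather than the data $g$, so to retain control by $g$ one must invoke elliptic interior regularity for $p$ on the annulus $A$, which is why the lower-order tail $\|g\|_q$ (without weight) appears on the right-hand side of the statement. Once that is granted, the rest is routine and independent of $\alpha$; the weight exponent enters only through \eqref{riesz}, whence the stated range of $(\alpha,q)$.
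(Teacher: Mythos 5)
Your proposal is correct and follows essentially the same route as the paper, which itself omits the proof on the grounds that the argument is exactly the cut-off procedure of \cite[Lemma 5.2]{Hi20} with the $\alpha=1$ Riesz bound replaced by the general weighted bound \eqref{riesz}; your sketch (inner Neumann estimate on a bounded subdomain where the weight is harmless, outer whole-space reduction with a Bogovskii correction on the annulus, weighted Riesz transforms for the far field) is that procedure. The zero-mean condition you defer does hold, since the annulus terms sum to $\int_D \nabla\phi\cdot Pg\,dx=0$ by the solenoidality and vanishing normal trace of $Pg$.
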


\section{Evolution operator on the whole space}
\label{sect-whole}

This section studies the initial value problem for the non-autonomous system
\begin{equation}
\begin{split}
&\partial_tu+{\mathcal L}(t)u+\nabla p=0,\qquad\mbox{div $u$}=0 \quad\mbox{in $\mathbb R^3\times (s,\infty)$}, \\
&\lim_{|x|\to\infty}u=0, \qquad u(\cdot,s)=f,
\end{split}
\label{linear-wh}
\end{equation}
where ${\mathcal L}(t)$ is the differential operator given by \eqref{pre-proj-0}.
Because of $\mbox{\eqref{grund}}_1$ we see that $\nabla p=0$ within the class $\nabla p\in L^q(\mathbb R^3)$.
Hence, under the compatibility condition $\mbox{div $f$}=0$, the solution formula to \eqref{linear-wh} is obtained from the heat semigroup
\[
\big(e^{t\Delta}f\big)(x)=(4\pi t)^{-3/2}\left(e^{-|\cdot|^2/4t}*f\right)(x)
\]
simply by transformation of variables as follows, where $*$ stands for convolution in spatial variable.
It is expicitly described as 
\begin{equation}
u(x,t)=
\big(U(t,s)f\big)(x)=
\Phi(t,s)\left(e^{(t-s)\Delta}f\right)\left(\Phi(t,s)^\top\Big(x+\int_s^t\Phi(t,\tau)\eta(\tau)\,d\tau\Big)\right),
\label{rep-wh}
\end{equation}
where $\Phi(t,s)\in \mathbb R^{3\times 3}$ stands for the evolution matrix, being an orthogonal matrix,
to the differential equation
$\frac{d\varphi}{dt}=-\omega(t)\times\varphi$.

The evolution operator $U(t,s)$ was already studied by
\cite{CM97, GH11, Ha11, HR11, HR14, Hi18, Hi20}, but we still need more to our end.
We begin with weighted estimates of 
$U(t,s)$ for initial data being not necessarily solenoidal,
that provides a solution to \eqref{linear-wh} (with $\nabla p=0$), however, without $\mbox{div $u$}=0$.
Estimate \eqref{wei-wh} below with $\alpha=0$ is the usual one which holds true for all $t>s\geq 0$, but 
\eqref{wei-wh} with $\alpha>0$ describes merely the smoothing rate near $t=s$.
For the Oseen semigroup (with constant $\eta$ and $\omega=0$) in the whole space, the results 
even with anisotropic weights reflecting the wake region have been
developed well by Tomoki Takahashi \cite{T24};
in fact, he has deduced the large time behavior, too, with less decay rate than usual. 
For the non-autonomous case as well, the rate of large time decay is observed in \eqref{wei-wh-glo} below with $k=0$,
which holds true for all $t>s\geq 0$ if $\sup_{t\geq 0}|\eta(t)|\leq m$ and which
agrees with the one shown by \cite{T24}.
Compared with Theorem \ref{weighted-est} for the exterior problem, we have \eqref{wei-wh} for every $\alpha\geq 0$, $q\in [1,\infty]$
and $j\geq 0$ being an integer.
The latter estimate \eqref{wei-wh2} for $t-s\leq {\mathcal T}$, $\alpha=1$ and $j=0$ was shown by \cite[Lemma 3.1, (3.11)]{Hi20}
under the condition $\eta\in L^\infty(0,\infty;\, \mathbb R^3)$, 
which is however replaced by \eqref{ever1} below to show \eqref{wei-wh2} 
for $0\leq s<t\leq {\mathcal T}$.
One needs \eqref{wei-wh2} 
for the proof of \eqref{wei-small1} with $j=1$.
\begin{proposition}
Suppose $\eta,\, \omega\in C([0,\infty);\, \mathbb R^3)$. 
Assume that 
$\alpha\geq 0$, $q\in [1,\infty]$ and $r\in [q,\infty]$.
Let $j\geq 0$ be an integer.

\begin{enumerate}
\item
For each ${\mathcal T}\in (0,\infty)$ and $m\in (0,\infty)$, there is a constant
$C=C({\mathcal T},m,\alpha,q,r,j)>0$ such that
\begin{equation}
\|\rho^\alpha \nabla^jU(t,s)f\|_{r,\mathbb R^3}
\leq C(t-s)^{-j/2-(3/q-3/r)/2}\|\rho^\alpha f\|_{q,\mathbb R^3}
\label{wei-wh}
\end{equation}
for all $(t,s)$ with $0\leq s<t\leq {\mathcal T}$ and $f\in L^q_\alpha(\mathbb R^3)$ whenever
\begin{equation}
\sup_{0\leq t\leq {\mathcal T}}|\eta(t)|\leq m,
\label{ever1}
\end{equation}
where the weight function $\rho$ is given by \eqref{weight}.

\item
For each ${\mathcal T}\in (0,\infty)$ and $m\in (0,\infty)$, there is a constant
$C=C({\mathcal T},m,\alpha,q,r,j)>0$ such that
\begin{equation}
\|\rho^\alpha \nabla^{j+1} U(t,s)f\|_{r,\mathbb R^3}
\leq C(t-s)^{-j/2-(3/q-3/r)/2}\|\rho^\alpha \nabla f\|_{q,\mathbb R^3}
\label{wei-wh2}
\end{equation}
for all $(t,s)$ with $0\leq s<t\leq {\mathcal T}$ and $f\in L^q(\mathbb R^3)$ with $\nabla f\in L^q_\alpha(\mathbb R^3)$ whenever
\eqref{ever1} is satisfied.
\end{enumerate}
\label{weighted-wh}
\end{proposition}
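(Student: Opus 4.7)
The plan is to exploit the explicit representation \eqref{rep-wh} of $U(t,s)$ as a rotated and translated heat semigroup, reducing both parts to weighted estimates for $e^{t\Delta}$ on $\mathbb R^3$. Write $Q(t,s)=\Phi(t,s)^\top$ and $z(t,s)=\int_s^t\Phi(t,\tau)\eta(\tau)\,d\tau$, so that $(U(t,s)f)(x)=\Phi(t,s)(e^{(t-s)\Delta}f)(Q(x+z))$. Under the hypothesis \eqref{ever1} one has $|z(t,s)|\leq m(t-s)\leq m\mathcal{T}$; since $\Phi$ and $Q$ are orthogonal, the substitution $y=Q(x+z)$ gives $\|\nabla^jU(t,s)f\|_{r,\mathbb R^3}=\|\nabla^je^{(t-s)\Delta}f\|_{r,\mathbb R^3}$. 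Verifying that $\rho(x)=1+|Q^\top y-z|$ is comparable to $\rho(y)=1+|y|$ uniformly for $|z|\leq m\mathcal{T}$, by splitting into the cases $|y|\geq 2|z|$ (where $|x|\geq|y|/2$) and $|y|<2|z|$ (where both $\rho(x)$ and $\rho(y)$ are bounded by constants depending on $m,\mathcal{T}$), yields
\[
\|\rho^\alpha\nabla^jU(t,s)f\|_{r,\mathbb R^3}\asymp \|\rho^\alpha\nabla^je^{(t-s)\Delta}f\|_{r,\mathbb R^3}
\]
with constants depending only on $m,\mathcal{T},\alpha,r$.

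Thus \eqref{wei-wh} will follow from the weighted heat estimate $\|\rho^\alpha\nabla^je^{t\Delta}f\|_r\leq Ct^{-j/2-(3/q-3/r)/2}\|\rho^\alpha f\|_q$, which I would prove directly from the Gaussian kernel $H_t(x)=(4\pi t)^{-3/2}e^{-|x|^2/(4t)}$. The key pointwise device is the subadditivity-type inequality $\rho(y)^\alpha\leq C_\alpha(\rho(w)^\alpha+|y-w|^\alpha)$, valid for all $\alpha\geq 0$ via $(a+b)^\alpha\leq 2^\alpha(a^\alpha+b^\alpha)$. This splits $\rho(y)^\alpha|\nabla^je^{t\Delta}f(y)|$ into (i) the convolution $|\nabla^jH_t|\ast(\rho^\alpha|f|)(y)$ and (ii) the convolution of the kernel $|y|^\alpha|\nabla^jH_t(y)|$ with $|f|(y)$. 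Young's inequality with $1/p=1+1/r-1/q$, together with the scaling identities $\|\nabla^jH_t\|_p=Ct^{-j/2-(3/q-3/r)/2}$ and $\|\,|\cdot|^\alpha\nabla^jH_t\|_p=Ct^{\alpha/2-j/2-(3/q-3/r)/2}$, controls (i) by $Ct^{-j/2-(3/q-3/r)/2}\|\rho^\alpha f\|_q$ and (ii) by $Ct^{\alpha/2}\cdot t^{-j/2-(3/q-3/r)/2}\|f\|_q$; since $t\leq\mathcal{T}$ and $\|f\|_q\leq\|\rho^\alpha f\|_q$ (as $\rho^\alpha\geq 1$), both terms merge into the desired bound with a constant depending on $\mathcal{T},\alpha$.

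For Part 2, the observation is that the heat semigroup commutes with spatial derivatives, so
\[
\nabla^{j+1}U(t,s)f(x)=\Phi(t,s)\,Q^{\otimes(j+1)}(\nabla^je^{(t-s)\Delta}\nabla f)(Q(x+z)).
\]
The same change of variables and weight comparison as above reduce the left-hand side of \eqref{wei-wh2} to $\|\rho^\alpha\nabla^je^{(t-s)\Delta}(\nabla f)\|_{r,\mathbb R^3}$, to which Part 1 (applied with $\nabla f$ in place of $f$) gives the required bound. Hence \eqref{wei-wh2} is essentially a corollary of \eqref{wei-wh}.

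The main technical obstacle, albeit a routine one, lies in Part 1: one must carefully track the $(m,\mathcal{T})$-dependence in the weight comparison after the translation by $z$, and handle the endpoint cases $q=1$ or $r=\infty$ in Young's inequality (which causes no difficulty since the scaling bounds for $\nabla^jH_t$ and $|\cdot|^\alpha\nabla^jH_t$ are valid for all $1\leq p\leq\infty$). Everything else is computation with explicit Gaussians.
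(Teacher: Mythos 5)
Your proposal is correct and follows essentially the same route as the paper: the explicit Gaussian representation \eqref{rep-wh}, the pointwise splitting $\rho(x)^\alpha\leq C_\alpha(|x-y|^\alpha+\rho(y)^\alpha)$ with the factor $|x-y|^\alpha$ absorbed into the Gaussian at the cost of $(t-s)^{\alpha/2}$, Young's inequality, and $\rho\geq 1$ to merge the terms on a bounded time interval. The only (cosmetic) difference is that you absorb the rotation--translation into a weight-comparison constant up front, whereas the paper keeps the convolution in the original variable and compares $\|\rho^\alpha\nabla^k g\|_q$ with $\|\rho^\alpha\nabla^k f\|_q$ at the end, which incidentally makes the $m$-dependence explicit in \eqref{wei-wh-glo}.
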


\begin{proof}
Both estimates \eqref{wei-wh} and \eqref{wei-wh2} with
$\alpha=0$ follow from the corresponding estimates of the heat semigroup.
Let $\alpha>0$, $j\geq 0$ being an integer and $k\in \{0,1\}$.
Suppose $\nabla^kf\in L^q_\alpha(\mathbb R^3)$ 
(as well as $f\in L^q(\mathbb R^3)$ to be well-definedness of $U(t,s)f$ when $k=1$)
and set $u(t)=U(t,s)f$,
then it follows from
\[
\rho(x)^\alpha\leq \max\{2^{\alpha-1},1\}\big(|x-y|^\alpha+\rho(y)^\alpha\big)
\]
that
\begin{equation*}
\quad \rho(x)^\alpha|\nabla^{j+k}u(x,t)|
\leq C(I+II)
\end{equation*}
with
\begin{equation*}
\begin{split}
I:&=(t-s)^{-(3+j)/2}\int_{\mathbb R^3}|x-y|^\alpha e^{-|x-y|^2/8(t-s)}|\nabla^kg(y)|\,dy  \\
&\leq C(t-s)^{-(3+j-\alpha)/2}\int_{\mathbb R^3}e^{-|x-y|^2/16(t-s)}|\nabla^kg(y)|\,dy,
\end{split}
\end{equation*}
\[
II:=(t-s)^{-(3+j)/2}\int_{\mathbb R^3}e^{-|x-y|^2/8(t-s)}\rho(y)^\alpha |\nabla^kg(y)|\,dy,
\]
where
\begin{equation*}
\begin{split}
g(y):&=f\left(
\Phi(t,s)^\top\Big(y+\int_s^t\Phi(t,\tau)\eta(\tau)\,d\tau\Big)\right),  \\
\partial_l g(y)&=\sum_{m=1}^3 \Phi_{lm}(t,s)\big(\partial_m f\big)
\left(
\Phi(t,s)^\top\Big(y+\int_s^t\Phi(t,\tau)\eta(\tau)\,d\tau\Big)\right), 
\end{split}
\end{equation*}
for $l\in \{1,2,3\}$.
Hence, we obtain 
\begin{equation*}
\begin{split}
&\quad \|\rho^\alpha \nabla^{j+k}u(t)\|_{r,\mathbb R^3}  \\
&\leq C(t-s)^{-(j-\alpha)/2-(3/q-3/r)/2}\|\nabla^kg\|_{q,\mathbb R^3}
+C(t-s)^{-j/2-(3/q-3/r)/2}\|\rho^\alpha \nabla^kg\|_{q,\mathbb R^3}
\end{split}
\end{equation*}
for $r\in [q,\infty]$.
Taking account of
\[
\|\rho^\alpha \nabla^kg\|_{q,\mathbb R^3}\leq C\|\rho^\alpha \nabla^kf\|_{q,\mathbb R^3}+Cm^\alpha (t-s)^\alpha\|\nabla^kf\|_{q,\mathbb R^3}
\]
and $\|\nabla^kg\|_{q,\mathbb R^3}=\|\nabla^kf\|_{q,\mathbb R^3}$,
we conclude 
\begin{equation}
\begin{split}
&\quad \|\rho^\alpha \nabla^{j+k}U(t,s)f\|_{r,\mathbb R^3}  \\
&\leq C(t-s)^{-j/2-(3/q-3/r)/2}
\Big[
\|\rho^\alpha \nabla^kf\|_{q,\mathbb R^3}+(t-s)^{\alpha/2}\big\{
1+m^\alpha (t-s)^{\alpha/2}\big\}
\|\nabla^kf\|_{q,\mathbb R^3}\Big]
\end{split}
\label{wei-wh-glo}
\end{equation}
with some $C>0$ independent of $m$, that leads us to \eqref{wei-wh} and \eqref{wei-wh2} near $t=s$.
The proof is complete.
\end{proof}

Let $1<q<\infty$.
As in \eqref{ig}, we introduce the family of linear operators $\{L_{\mathbb R^3}(t)\}_{t\geq 0}$ on $L^q_\sigma(\mathbb R^3)$ by
\begin{equation}
\begin{split}
D_q(L_{\mathbb R^3}(t))
&=\{u\in L^q_\sigma(\mathbb R^3)\cap W^{2,q}(\mathbb R^3);\, (\omega(t)\times x)\cdot\nabla u\in L^q(\mathbb R^3)\},  \\
L_{\mathbb R^3}(t)u
&=P_{\mathbb R^3}{\mathcal L}(t)u={\mathcal L}(t)u,
\end{split}
\label{ig-wh}
\end{equation}
where ${\mathcal L}(t)$ is given by \eqref{pre-proj-0} and the latter equality above follows from $\mbox{\eqref{grund}}_1$.
We refer to \cite[Theorem 4.1]{MPRH} for the right description of the domain \eqref{ig-wh}
such that it is indeed the generator of the semigroup on $L^q_\sigma(\mathbb R^3)$ for each $t\geq 0$.
The first half of the following proposition tells us that the evolution operator $U(t,s)$ provides a strong solution to \eqref{linear-wh}
when $f\in L^q_{1,\sigma}(\mathbb R^3)$.
This improves the corresponding results \cite{HR11, HR14, Hi20} for $f\in Z_q(\mathbb R^3)$,
see \eqref{Z}.
As an estimate relating to \eqref{B-est-wh} below with $k=0$ over $B_R$,
we already know from \cite[Lemma 3.2]{Hi20} that $U(\cdot,s)g\in C^1((s,\infty);\, W^{-1,q}(B_R))$ subject to
\begin{equation}
\|\partial_tU(t,s)g\|_{W^{-1,q}(B_R)}\leq C(t-s)^{-1/2}\|g\|_{q,\mathbb R^3}
\label{weak-wh}
\end{equation}
for all $(t,s)$ with $0\leq s<t\leq {\mathcal T}$ and $g\in L^q_\sigma(\mathbb R^3)$.
For the proof of \eqref{wei-small1}, estimate \eqref{B-est-wh} with $k=1$ is needed via \eqref{remain-gr-wei}. 
\begin{proposition}
Suppose $\eta,\,\omega\in C([0,\infty);\, \mathbb R^3)$.
Let $q\in (1,\infty)$.
For every $f\in L^q_{1,\sigma}(\mathbb R^3)$, we have
\begin{equation*}
\begin{split}
&U(\cdot,s)f\in C^1((s,\infty);\,L^q_\sigma(\mathbb R^3))\cap C((s,\infty);\,W^{j,q}(\mathbb R^3))\quad \forall\, j\geq 0, \\
&U(t,s)f\in Y_q(\mathbb R^3)\quad\forall\,t\in (s,\infty),
\end{split}
\end{equation*}
with
\[
\partial_tU(t,s)f+L_{\mathbb R^3}(t)U(t,s)f=0, \qquad t\in (s,\infty),
\]
in $L^q_\sigma(\mathbb R^3)$.

Let $j\geq 0$ be an integer.
For each ${\mathcal T}\in (0,\infty)$, $m\in (0,\infty)$, $\mu\in (0,1)$ and $R\in (0,\infty)$, 
there are constants $C_1=C_1({\mathcal T},m,\mu,q,j)>0$,
$C_2=C_2({\mathcal T},m,q)>0$ and $C_3=C_3({\mathcal T},m,q,R)>0$
such that
\begin{equation}
\|\nabla^jU(t,s)f-\nabla^jU(\tau,s)f\|_{q,\mathbb R^3}
\leq C_1(t-\tau)^\mu (\tau-s)^{-j/2-\mu}\|\rho f\|_{q,\mathbb R^3}
\label{2nd-hoe-wh}
\end{equation}
\begin{equation}
\|\partial_tU(t,s)f\|_{q,\mathbb R^3}
\leq C_2\big[(t-s)^{-1}\|f\|_{q,\mathbb R^3}+(t-s)^{-1/2}\|\rho f\|_{q,\mathbb R^3}\big]
\label{str-est-wh}
\end{equation}
\begin{equation}
\|\partial_tU(t,s)g\|_{q,B_R} 
\leq C_3(t-s)^{-1+k/2}\|g\|_{W^{k,q}(\mathbb R^3)}
\label{B-est-wh}
\end{equation}
for all $(t,\tau,s)$ with $0\leq s<\tau<t\leq {\mathcal T}$ ($\tau$ is absent for the latter two estimates), 
$f\in L^q_{1,\sigma}(\mathbb R^3)$ and
$g\in W^{k,q}(\mathbb R^3)\cap L^q_\sigma(\mathbb R^3)$
with $k\in \{0,1\}$ whenever \eqref{ever1} is satisfied.
\label{strong-wh}
\end{proposition}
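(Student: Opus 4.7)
The plan is to exploit the explicit representation \eqref{rep-wh} together with the weighted smoothing estimates of Proposition \ref{weighted-wh}. Direct differentiation of \eqref{rep-wh}, using $\partial_t\Phi(t,s)=-\omega(t)\times\Phi(t,s)$ and the chain rule for the translated heat semigroup, verifies that for smooth data $f\in C_{0,\sigma}^\infty(\mathbb R^3)$ the function $u(t)=U(t,s)f$ satisfies pointwise
\[
\partial_t u=\Delta u+(\eta(t)+\omega(t)\times x)\cdot\nabla u-\omega(t)\times u,
\]
which coincides with $\partial_tu+L_{\mathbb R^3}(t)u=0$. I then transfer this identification and the desired estimates to $f\in L^q_{1,\sigma}(\mathbb R^3)$ by approximation from $C_{0,\sigma}^\infty(\mathbb R^3)$, observing that the approximants converge in both $L^q$ and $L^q_1$, and that each term of the right-hand side is continuous and bounded in $L^q$ for $t>s$ by virtue of Proposition \ref{weighted-wh}.

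For \eqref{str-est-wh}, I bound each term of the right-hand side separately: $\|\Delta U(t,s)f\|_q\le C(t-s)^{-1}\|f\|_q$ via \eqref{wei-wh} with $(\alpha,j)=(0,2)$; the unbounded drift is handled by $\|(\omega\times x)\cdot\nabla U(t,s)f\|_q\le|\omega|\|\rho\nabla U(t,s)f\|_q\le C(t-s)^{-1/2}\|\rho f\|_q$ via \eqref{wei-wh} with $(\alpha,j)=(1,1)$; the $\eta\cdot\nabla U$ and $\omega\times U$ contributions are dominated by these. The local estimate \eqref{B-est-wh} comes from the same PDE, but on $B_R$ one has $|\omega\times x|\le|\omega|R$, so no weight is required: for $k=0$ all terms are bounded by $(t-s)^{-1}\|g\|_q$, while for $k=1$ the $\nabla^2$ term is instead controlled via \eqref{wei-wh2} with $(\alpha,j)=(0,1)$ as $\|\nabla^2U(t,s)g\|_q\le C(t-s)^{-1/2}\|\nabla g\|_q$, delivering the improved exponent $-1+k/2$.

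For the H\"older estimate \eqref{2nd-hoe-wh}, I apply $\nabla^j$ to the PDE. Since $\partial_l(\omega(t)\times x)_i=\epsilon_{ijl}\omega_j(t)$ is bounded, Leibniz yields
\[
\partial_\sigma\nabla^jU(\sigma,s)f=\nabla^{j+2}U(\sigma,s)f+(\eta+\omega\times x)\cdot\nabla^{j+1}U(\sigma,s)f+R_j(\sigma,s)f,
\]
where $R_j$ involves $\omega$ and spatial derivatives of $U$ of order at most $j+1$ without the $|x|$ growth. Each summand is controlled in $L^q$ by $C(\sigma-s)^{-1-j/2}\|\rho f\|_q$ via \eqref{wei-wh} with $\alpha\in\{0,1\}$. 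The elementary bound $\int_\tau^t(\sigma-s)^{-1-j/2}\,d\sigma\le C(t-\tau)(\tau-s)^{-1-j/2}$, valid for every integer $j\ge 0$ (by $\log(1+x)\le x$ when $j=0$ and Bernoulli when $j\ge 1$), then gives
\[
\|\nabla^jU(t,s)f-\nabla^jU(\tau,s)f\|_q\le C(t-\tau)(\tau-s)^{-1-j/2}\|\rho f\|_q,
\]
and interpolating this with the trivial bound $\|\nabla^jU(\cdot,s)f\|_q\le C(\tau-s)^{-j/2}\|\rho f\|_q$ through $\min\{A,B\}\le A^{1-\mu}B^\mu$ with $\mu\in(0,1)$ produces \eqref{2nd-hoe-wh}.

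The remaining regularity assertions are corollaries: $U(\cdot,s)f\in C((s,\infty);W^{j,q}(\mathbb R^3))$ for every $j\ge 0$ is immediate from \eqref{2nd-hoe-wh} applied to arbitrary $j$; the containment $U(t,s)f\in Y_q(\mathbb R^3)$ is read off from \eqref{wei-wh} for $(\alpha,j)\in\{(0,0),(0,1),(0,2),(1,1)\}$ together with the (formula-preserved) solenoidality; and the $C^1$-regularity with $\partial_tU=-L_{\mathbb R^3}U$ in $L^q_\sigma$ comes from the PDE combined with continuity in $t$ of each summand of the right-hand side. The chief obstacle is the latter step for the term $(\omega(t)\times x)\cdot\nabla U(t,s)f$: its continuity cannot be obtained from a $\rho$-weighted H\"older estimate of $\nabla U$ under the hypothesis $\rho f\in L^q$ alone, and must instead be extracted by splitting at radius $R$, bounding the inner part via the unweighted H\"older estimate and absorbing the outer part via tail-smallness of $\|\rho\nabla U(t,s)f\|_q$ (itself uniform in $t$ on compact subsets of $(s,\infty)$), together with continuity of $\eta,\omega$. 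This unbounded coefficient is precisely what standard heat-semigroup theory cannot absorb, and the weighted estimate \eqref{wei-wh} with $\alpha=1$ is the decisive tool throughout.
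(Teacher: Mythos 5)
Your three quantitative estimates \eqref{str-est-wh}, \eqref{B-est-wh} and \eqref{2nd-hoe-wh} are proved exactly as in the paper: termwise control of ${\mathcal L}(t)u$ through the weighted bounds \eqref{wei-wh}--\eqref{wei-wh2} (with the unbounded coefficient $\omega\times x$ absorbed by the $\alpha=1$ weight globally and by $|\omega\times x|\le CR$ on $B_R$), the integral bound $\int_\tau^t\|\nabla^j{\mathcal L}(\sigma)u(\sigma)\|_q\,d\sigma\le C(t-\tau)(\tau-s)^{-1-j/2}\|\rho f\|_q$, and interpolation of that Lipschitz-type bound against the crude bound $C(\tau-s)^{-j/2}\|f\|_q$. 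Where you diverge is the qualitative first half. The paper does not re-derive the strong-solution property at all: it cites the earlier results of Hansel--Rhandi and Hishida, which give $U(\cdot,s)f\in C^1$ solving the equation whenever $f\in Z_q(\mathbb R^3)$ (i.e.\ $\rho\nabla f\in L^q$), and then simply observes that Proposition \ref{weighted-wh} places $U(t',s)f$ in $Z_q(\mathbb R^3)$ for every $t'>s$ when $f\in L^q_{1,\sigma}(\mathbb R^3)$, so the semigroup property transfers everything to $t>t'$. Your route --- verify the PDE for $C_{0,\sigma}^\infty$ data from the explicit formula, pass to the limit in $L^q_1$, and establish $C^1$ by proving continuity of $\sigma\mapsto{\mathcal L}(\sigma)u(\sigma)$ --- is self-contained and avoids the external citations, at the cost of having to confront directly the continuity of $(\omega(t)\times x)\cdot\nabla U(t,s)f$, which the bootstrap sidesteps. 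Your splitting at radius $R$ is the right idea, but the one assertion you leave unproved is the decisive one: the \emph{uniform} (in $t$ on a compact $J\subset(s,\infty)$) smallness of $\|\rho\nabla U(t,s)f\|_{q,\{|x|>R\}}$ as $R\to\infty$. This does not follow from the uniform bound $C(t-s)^{-1/2}\|\rho f\|_q$ alone; you should justify it by approximating $f$ in $L^q_{1,\sigma}(\mathbb R^3)$ by $f_n\in C_{0,\sigma}^\infty$, noting that $\sup_{t\in J}\|\rho\nabla U(t,s)(f-f_n)\|_q\le C\|\rho(f-f_n)\|_q\to0$ while for each fixed $n$ one has $\|\rho\nabla U(t,s)f_n\|_{q,\{|x|>R\}}\le R^{-1}\|\rho^2\nabla U(t,s)f_n\|_q\le CR^{-1}\|\rho^2f_n\|_q<\infty$ by \eqref{wei-wh} with $\alpha=2$. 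With that line added your argument is complete; as written it is the single soft spot.
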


\begin{proof}
Set $u(t)=U(t,s)f$.
We know from \cite[Proposition 2.1]{HR11}, 
\cite[Proposition 3.1]{HR14} and \cite[Lemma 3.1, item 3]{Hi20}
that $u(t)$ possesses the desired regularity (except higher order spatial regularity)
and it is a strong solution
as long as $f\in Z_q(\mathbb R^3)$, see \eqref{Z}.
This combined with Proposition \ref{weighted-wh}, which claims $U(t,s)f\in Z_q(\mathbb R^3)$ for all $t>s$ when 
$f\in L^q_{1,\sigma}(\mathbb R^3)$,
implies the first half of Proposition \ref{strong-wh} on account of the semigroup property,
except for $u\in C((s,\infty);\, W^{j,q}(\mathbb R^3))$, which will be next verified together with \eqref{2nd-hoe-wh}.

For $f\in L^q_{1,\sigma}(\mathbb R^3)$, we find from \eqref{wei-wh} that
\begin{equation}
\begin{split}
&\quad \|\nabla^ju(t)-\nabla^ju(\tau)\|_{q,\mathbb R^3} 
\leq \int_\tau^t \|\nabla^j {\mathcal L}(\sigma)u(\sigma)\|_{q,\mathbb R^3} \,d\sigma  \\
&\leq C\int_\tau^t \Big(\|\nabla^{j+2}u(\sigma)\|_{q,\mathbb R^3}
+\|\rho\nabla^{j+1}u(\sigma)\|_{q,\mathbb R^3}
+\|\nabla^ju(\sigma)\|_{q,\mathbb R^3}\Big)\,d\sigma  \\
&\leq C(t-\tau)\Big\{(\tau-s)^{-j/2-1}\|f\|_{q,\mathbb R^3}+(\tau-s)^{-j/2-1/2}\|\rho f\|_{q,\mathbb R^3}
+(\tau-s)^{-j/2}\|f\|_{q,\mathbb R^3}\Big\}
\end{split}
\label{2nd-hoe1}
\end{equation}
and that
\begin{equation}
\|\nabla^ju(t)-\nabla^ju(\tau)\|_{q,\mathbb R^3}
\leq C(\tau-s)^{-j/2}\|f\|_{q,\mathbb R^3}.
\label{2nd-hoe2}
\end{equation}
Given $\mu\in (0,1)$, we compute the product
$\mbox{\eqref{2nd-hoe1}}^{\mu}\mbox{\eqref{2nd-hoe2}}^{1-\mu}$ to conclude \eqref{2nd-hoe-wh}.

By \eqref{wei-wh} again it is readily seen 
that
\begin{equation}
\begin{split}
\|\partial_tu(t)\|_{q,\mathbb R^3}
&\leq \|\Delta u(t)\|_{q,\mathbb R^3}+C
\|\rho \nabla u(t)\|_{q,\mathbb R^3}+C\|u(t)\|_{q,\mathbb R^3} \\
&\leq C(t-s)^{-1}\|f\|_{q,\mathbb R^3}+C(t-s)^{-1/2}\|\rho f\|_{q,\mathbb R^3}+C\|f\|_{q,\mathbb R^3}
\end{split}
\label{evo-str-wh}
\end{equation}
which leads to \eqref{str-est-wh}.

By \eqref{wei-wh} and \eqref{wei-wh2} with $\alpha=0$ we find
\begin{equation}
\|u(t)\|_{W^{2,q}(\mathbb R^3)}\leq C(t-s)^{-1+k/2}\|f\|_{W^{k,q}(\mathbb R^3)}
\label{less-sing-wh}
\end{equation}
for $k\in \{0,1\}$ and $f\in 
W^{k,q}(\mathbb R^3)\cap L^q_\sigma(\mathbb R^3)$.
Recall that $\partial_tu(t)$ exists as an element of 
$W^{-1,q}(B_R)$ for every $f\in L^q_\sigma(\mathbb R^3)$, see \eqref{weak-wh}; 
in fact, without the condition $\rho f\in L^q(\mathbb R^3)$, it makes sense even in $L^q(B_R)$ subject to
\begin{equation}
\|\partial_tu(t)\|_{q,B_R}
\leq C\|u(t)\|_{W^{2,q}(B_R)} 
\leq C(t-s)^{-1}\|f\|_{q,\mathbb R^3}
\label{evo-str-loc-wh}
\end{equation}
for every $f\in L^q_\sigma(\mathbb R^3)$.
If, in addition, $f\in W^{1,q}(\mathbb R^3)$,
then
\eqref{less-sing-wh} with $k=1$
implies \eqref{B-est-wh} (with $g=f$).
\end{proof}
\begin{remark}
Let $f\in Z_q(\mathbb R^3)$, see \eqref{Z}, instead of $f\in L^q_{1,\sigma}(\mathbb R^3)$.
Then one can still compute the difference
$\nabla^ju(t)-\nabla^ju(\tau)$ as in \eqref{2nd-hoe1}, however, the second term of the right-hand side 
of \eqref{2nd-hoe1} should be
replaced with $(\tau-s)^{-j/2}\|\rho \nabla f\|_{q,\mathbb R^3}$ by virtue of \eqref{wei-wh2}.
We thus obtain 
\begin{equation*}
\|\nabla^jU(t,s)f-\nabla^jU(\tau,s)f\|_{q,\mathbb R^3} 
\leq C(t-\tau)^\mu
\Big[(\tau-s)^{-j/2-\mu}\|f\|_{q,\mathbb R^3}+(\tau-s)^{-j/2}\|f\|_{q,\mathbb R^3}^{1-\mu}\|\rho \nabla f\|_{q,\mathbb R^3}^\mu\Big]
\end{equation*}
for every integer $j\geq 0$ and $\mu\in (0,1)$. 
\label{rem-hoe-wh}
\end{remark}
\begin{remark}
Although we have the H\"older estimate \eqref{2nd-hoe-wh} for $\nabla^jU(\cdot,s)f$ with $f\in L^q_{1,\sigma}(\mathbb R^3)$
even though $j\geq 2$,
it is not clear whether
$U(\cdot,s)f\in C((s,\infty);\, Y_q(\mathbb R^3))$ for such $f$, see Remark \ref{rem-conti}.
This continuity can be deduced if, in addition, $f\in L^q_{2,\sigma}(\mathbb R^3)$ because we see that
\begin{equation}
\|\rho \nabla U(t,s)f-\rho\nabla U(\tau,s)f\|_{q,\mathbb R^3}
\leq C(t-\tau)^\mu (\tau-s)^{-1/2-\mu}\|\rho^2 f\|_{q,\mathbb R^3}
\label{wei-hoe-wh}
\end{equation}
for every $\mu\in (0,1/2]$ by the same way as in \eqref{hoe-based}--\eqref{hoe-split} below on exterior domains.
For the exterior problem, 
indeed we know $T(\cdot,s)f\in C((s,\infty);\, W^{2,q}(D))$ with $f\in L^q_{1,\sigma}(D)$ from the item 3 of
Theorem \ref{strong-sol}, but the corresponding H\"older estimate to \eqref{2nd-hoe-wh} with $j=2$
is not available yet, for we 
do not have \eqref{wei-est1} with $j\geq 2$, so that the computation \eqref{2nd-hoe1} does not work on exterior domains
unless $j=0$.
\label{rem-2nd-wh}
\end{remark}

For later use, we prepare the following lemma on the corresponding semigroup.
The item 1 is soon used in the proof of Lemma \ref{lem-evo-semi}, while the item 2 is needed for \eqref{unif-conv}.
\begin{lemma}
Suppose $\eta,\,\omega\in C([0,\infty);\, \mathbb R^3)$.
Let $q\in (1,\infty)$ and let $L_{\mathbb R^3}(t)$ be the operator on $L^q_\sigma(\mathbb R^3)$ given by \eqref{ig-wh}.

\begin{enumerate}
\item
For each ${\mathcal T}\in (0,\infty)$, $m\in (0,\infty)$ and $R\in (0,\infty)$, 
there are constants $C=C({\mathcal T},m,q)>0$ and $C^\prime=C^\prime({\mathcal T},m,q,R)>0$ such that
\begin{equation}
\|L_{\mathbb R^3}(t)e^{-(t-s)L_{\mathbb R^3}(t)}f\|_{q,\mathbb R^3}
\leq C\big[(t-s)^{-1}\|f\|_{q,\mathbb R^3}+(t-s)^{-1/2}\|\rho f\|_{q,\mathbb R^3}\big]
\label{semi-wh-0}
\end{equation}
\begin{equation}
\|L_{\mathbb R^3}(t)e^{-(t-s)L_{\mathbb R^3}(t)}g\|_{q,B_R}
\leq C^\prime(t-s)^{-1}\|g\|_{q,\mathbb R^3}
\label{semi-wh-loc}
\end{equation}
for all $(t,s)$ with $0\leq s<t\leq {\mathcal T}$, $f\in L^q_{1,\sigma}(\mathbb R^3)$ 
and $g\in L^q_\sigma(\mathbb R^3)$ whenever \eqref{ever1} is satisfied.

\item
For every compact interval $J\subset (s,\infty)$ and $g\in C((s,\infty);\, L^q_\sigma(\mathbb R^3))$ we have
\begin{equation}
\lim_{\varepsilon\to 0+}\sup_{t\in J}\|e^{-\varepsilon L_{\mathbb R^3}(t)}g(t)-g(t)\|_{q,\mathbb R^3}=0,
\label{con-whg}
\end{equation}
where $s\geq 0$.
\end{enumerate}
\label{lem-sg-wh}
\end{lemma}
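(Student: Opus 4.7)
The key observation is that, for each fixed $t\geq 0$, the semigroup $e^{-\tau L_{\mathbb R^3}(t)}$ coincides with the evolution operator of Section \ref{sect-whole} specialized to the autonomous system obtained by freezing the rigid motion at $(\eta_0,\omega_0):=(\eta(t),\omega(t))$; that is, \eqref{rep-wh} with $\eta(\cdot)\equiv\eta_0$ and $\omega(\cdot)\equiv\omega_0$ yields precisely $e^{-\tau L_{\mathbb R^3}(t)}$ at elapsed time $\tau$. The constants in Propositions \ref{weighted-wh} and \ref{strong-wh} depend only on $\mathcal T$ and $m$ (and not on the specific $(\eta,\omega)$ subject to \eqref{ever1}), so all those estimates apply uniformly in $t\in[0,\mathcal T]$ to this frozen autonomous system. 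For item 1, setting $\tau=t-s$ and using the identity $\partial_\tau e^{-\tau L_{\mathbb R^3}(t)}h=-L_{\mathbb R^3}(t)e^{-\tau L_{\mathbb R^3}(t)}h$, valid for $\tau>0$ and $h\in L^q_\sigma(\mathbb R^3)$ by parabolic smoothing, estimate \eqref{str-est-wh} applied to the frozen system directly yields \eqref{semi-wh-0} for $f\in L^q_{1,\sigma}(\mathbb R^3)$, while \eqref{B-est-wh} with $k=0$ yields \eqref{semi-wh-loc} for $g\in L^q_\sigma(\mathbb R^3)$.

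For item 2, I adapt the strategy of Lemma \ref{lem-con-bdd}. The representation \eqref{rep-wh} (orthogonality of $\Phi(t,s)$ together with the inner substitution being an affine isometry of $\mathbb R^3$) gives $\|e^{-\tau L_{\mathbb R^3}(t)}f\|_{q,\mathbb R^3}=\|e^{\tau\Delta}f\|_{q,\mathbb R^3}\leq\|f\|_{q,\mathbb R^3}$, so the family is uniformly bounded on $L^q_\sigma(\mathbb R^3)$. I then verify strong continuity at $\tau=0$ uniformly in $t\in[0,\mathcal T]$: for $f\in C_{0,\sigma}^\infty(\mathbb R^3)$ with $\mathrm{supp}\,f\subset B_R$, compact support forces $f\in D_q(L_{\mathbb R^3}(t))$ for every $t$, and the pointwise bound
\[
\|L_{\mathbb R^3}(t)f\|_{q,\mathbb R^3}\leq\|\Delta f\|_{q,\mathbb R^3}+(|\eta(t)|+R|\omega(t)|)\|\nabla f\|_{q,\mathbb R^3}+|\omega(t)|\|f\|_{q,\mathbb R^3}
\]
yields a constant uniform in $t\in[0,\mathcal T]$ under \eqref{ever1}; hence
\[
\|e^{-\varepsilon L_{\mathbb R^3}(t)}f-f\|_{q,\mathbb R^3}\leq\int_0^\varepsilon\|e^{-\sigma L_{\mathbb R^3}(t)}L_{\mathbb R^3}(t)f\|_{q,\mathbb R^3}\,d\sigma\leq C(m,R,f)\,\varepsilon,
\]
and the uniform contractivity extends the strong continuity to every $f\in L^q_\sigma(\mathbb R^3)$ by density of $C_{0,\sigma}^\infty(\mathbb R^3)$.

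With the uniform boundedness and uniform strong continuity at $\tau=0$ in hand, the three-term splitting of Lemma \ref{lem-con} (covering the compact set $g(J)\subset L^q_\sigma(\mathbb R^3)$ by finitely many $L^q$-balls around a selection $\{g(t_k)\}_{k=1}^N$, and decomposing $e^{-\varepsilon L_{\mathbb R^3}(t)}g(t)-g(t)$ as $e^{-\varepsilon L_{\mathbb R^3}(t)}(g(t)-g(t_k))+(e^{-\varepsilon L_{\mathbb R^3}(t)}g(t_k)-g(t_k))+(g(t_k)-g(t))$) produces \eqref{con-whg}. No genuine obstacle arises beyond tracking the uniformity in $t$ of the $m$-dependent constants at each stage; once the reduction to the frozen autonomous case is recognized, both items follow almost as corollaries of Proposition \ref{strong-wh} and the isometric structure of \eqref{rep-wh}.
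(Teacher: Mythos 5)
Your proposal is correct and follows essentially the same route as the paper: item 1 is obtained by recognizing $e^{-\tau L_{\mathbb R^3}(t)}$ as the frozen-coefficient instance of the representation \eqref{rep-wh} and then invoking the (uniform-in-$t$) estimates behind \eqref{str-est-wh} and \eqref{B-est-wh}, while item 2 reproduces the uniform boundedness/uniform strong continuity reduction of Lemma \ref{lem-con-bdd} combined with the covering argument of Lemma \ref{lem-con}. The only cosmetic difference is that the paper bounds $(\omega(t)\times x)\cdot\nabla f$ via $\|f\|_{Y_q(\mathbb R^3)}$ rather than via the support radius, which changes nothing.
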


\begin{proof}
Consider the semigroup $u(\tau)=e^{-\tau L_{\mathbb R^3}(t)}f$ with the freezing parameter $t\in (0,{\mathcal T}]$
and $f\in L^q_{1,\sigma}(\mathbb R^3)$.
Let $\Psi_{\omega(t)}(\tau)\in \mathbb R^{3\times 3}$ be the orthogonal matrix that obeys
$\frac{d}{d\tau}\Psi_{\omega(t)}(\tau)=-\omega(t)\times \Psi_{\omega(t)}(\tau)$ with $\Psi_{\omega(t)}(0)=\mathbb I$ being the
unity matrix. Then, as in \eqref{rep-wh}, this semigroup is explicitly represented as
\[
\big(e^{-\tau L_{\mathbb R^3}(t)}f\big)(x)
=\Psi_{\omega(t)}(\tau)\big(e^{\tau\Delta}f\big)
\left(\Psi_{\omega(t)}(\tau)^\top \Big(x+\int_0^\tau\Psi_{\omega(t)}(\tau-\sigma)\eta(t)\,d\sigma\Big)\right).
\]
Therefore, we have the same weighted estimate for this semigroup as in \eqref{wei-wh}.
Hence, as in \eqref{evo-str-wh} 
we infer
\[
\|{\mathcal L}(t)u(\tau)\|_{q,\mathbb R^3}\leq C\big[\tau^{-1}\|f\|_{q,\mathbb R^3}+\tau^{-1/2}\|\rho f\|_{q,\mathbb R^3}\big]
\]
for $0<\tau\leq {\mathcal T}$.
We set $\tau=t-s$ to get \eqref{semi-wh-0}.
It is also easy to verify \eqref{semi-wh-loc} as in \eqref{evo-str-loc-wh}.

In an analogous way to
the proof of Lemma \ref{lem-con-bdd}, we conclude \eqref{con-whg} with the corresponding two claims to 
\eqref{auxi0-bdd}--\eqref{auxi1-bdd} at hand.
The former follows from the representation formula of the semigroup $e^{-\varepsilon L_{\mathbb R^3}(t)}$ described above,
whereas, as in \eqref{auxi2-bdd}, estimate
\[
\sup_{0\le t\leq{\mathcal T}}
\|e^{-\varepsilon L_{\mathbb R^3}(t)}f-f\|_{q,\mathbb R^3}
\leq C\varepsilon \|f\|_{Y_q(\mathbb R^3)} 
\]
for $f\in C^\infty_{0,\sigma}(\mathbb R^3)$ implies the latter.
The proof is complete.
\end{proof}

The next lemma plays an important role in section \ref{sect-duha}.
In fact, estimate \eqref{evo-semi-wh-0} below is the only reason why more weight $[g]_{q,2,\kappa}<\infty$ is asked 
on the external force to deduce
\eqref{duha-str-est} for the Duhamel term \eqref{duha}, see the assumption \eqref{ass2-force}
and Theorem \ref{duha-strong}.
\begin{lemma}
Suppose that $\eta$ and $\omega$ fulfill \eqref{ass1-rigid} for some $\theta\in (0,1]$.
Let $q\in (1,\infty)$.
For each ${\mathcal T}\in (0,\infty)$, $m\in (0,\infty)$ and $R\in (0,\infty)$, 
there are constants $C=C({\mathcal T},m,q,\theta)>0$ and $C^\prime=C^\prime({\mathcal T},m,q,R,\theta)>0$ such that
\begin{equation}
\|L_{\mathbb R^3}(t)\big(U(t,s)-e^{-(t-s)L_{\mathbb R^3}(t)}\big)f\|_{q,\mathbb R^3}
\leq C(t-s)^\theta\Big[(t-s)^{-1/2}\|\rho f\|_{q,\mathbb R^3}+\|\rho^2f\|_{q,\mathbb R^3}\Big]
\label{evo-semi-wh-0}
\end{equation}
\begin{equation}
\|L_{\mathbb R^3}(t)\big(U(t,s)-e^{-(t-s)L_{\mathbb R^3}(t)}\big)g\|_{q,B_R}
\leq C^\prime (t-s)^{-1/2+\theta}\|\rho g\|_{q,\mathbb R^3}
\label{evo-semi-loc-0}
\end{equation}
for all $(t,s)$ with $0\leq s<t\leq {\mathcal T}$, $f\in L^q_{2,\sigma}(\mathbb R^3)$ and
$g\in L^q_{1,\sigma}(\mathbb R^3)$ whenever \eqref{ever} is satisfied.
\label{lem-evo-semi}
\end{lemma}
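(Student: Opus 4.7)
The plan is to use the classical identity from the theory of evolution operators of parabolic type (\cite{Fri, Ta, Ya}). Since both $U(t,s)f$ and $e^{-(t-s)L_{\mathbb R^3}(t)}f$ start at $f$ and evolve under the nonautonomous vs.\ frozen-coefficient equations respectively, differentiating the interpolant $e^{-(t-\sigma)L_{\mathbb R^3}(t)}U(\sigma,s)f$ in $\sigma$ and integrating from $s$ to $t$ yields
\begin{equation*}
U(t,s)f-e^{-(t-s)L_{\mathbb R^3}(t)}f
=\int_s^t e^{-(t-\sigma)L_{\mathbb R^3}(t)}\bigl[L_{\mathbb R^3}(t)-L_{\mathbb R^3}(\sigma)\bigr]U(\sigma,s)f\,d\sigma.
\end{equation*}
Applying $L_{\mathbb R^3}(t)$, which commutes with its own semigroup, gives the representation
\begin{equation*}
L_{\mathbb R^3}(t)\bigl(U(t,s)-e^{-(t-s)L_{\mathbb R^3}(t)}\bigr)f
=\int_s^t L_{\mathbb R^3}(t)\,e^{-(t-\sigma)L_{\mathbb R^3}(t)}\,h(\sigma)\,d\sigma,
\end{equation*}
where $h(\sigma):=[L_{\mathbb R^3}(t)-L_{\mathbb R^3}(\sigma)]U(\sigma,s)f$. (The preliminary justification that this identity indeed holds on the domain $L^q_{2,\sigma}(\mathbb R^3)\subset D_q(L_{\mathbb R^3}(t))$, or after a density argument from $C_{0,\sigma}^\infty(\mathbb R^3)$, follows from the strong-solution regularity established in Proposition \ref{strong-wh}.)

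Next I would estimate $h(\sigma)$ using the H\"older assumption \eqref{ass1-rigid}, which yields $|\eta(t)-\eta(\sigma)|+|\omega(t)-\omega(\sigma)|\leq m(t-\sigma)^\theta$. Since
\[
h(\sigma)=-(\eta(t)-\eta(\sigma))\cdot\nabla U(\sigma,s)f-\bigl((\omega(t)-\omega(\sigma))\times x\bigr)\cdot\nabla U(\sigma,s)f+(\omega(t)-\omega(\sigma))\times U(\sigma,s)f,
\]
the factor $x$ in the drift term forces the bounds
\begin{align*}
\|h(\sigma)\|_{q,\mathbb R^3}&\leq C(t-\sigma)^\theta\bigl(\|\rho\nabla U(\sigma,s)f\|_{q,\mathbb R^3}+\|U(\sigma,s)f\|_{q,\mathbb R^3}\bigr),\\
\|\rho h(\sigma)\|_{q,\mathbb R^3}&\leq C(t-\sigma)^\theta\bigl(\|\rho^2\nabla U(\sigma,s)f\|_{q,\mathbb R^3}+\|\rho U(\sigma,s)f\|_{q,\mathbb R^3}\bigr).
\end{align*}
Plugging into the weighted smoothing estimates \eqref{wei-wh} with $\alpha\in\{1,2\}$, $j=1$ (and $j=0$ using $\|f\|_q\leq\|\rho f\|_q\leq\|\rho^2 f\|_q$ since $\rho\geq 1$), I obtain
$\|h(\sigma)\|_q\leq C(t-\sigma)^\theta[(\sigma-s)^{-1/2}+1]\|\rho f\|_q$ and
$\|\rho h(\sigma)\|_q\leq C(t-\sigma)^\theta[(\sigma-s)^{-1/2}\|\rho^2 f\|_q+\|\rho f\|_q]$.

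For \eqref{evo-semi-wh-0}, I combine this with \eqref{semi-wh-0} applied to $h(\sigma)\in L^q_{1,\sigma}(\mathbb R^3)$, which provides
\[
\|L_{\mathbb R^3}(t)e^{-(t-\sigma)L_{\mathbb R^3}(t)}h(\sigma)\|_{q,\mathbb R^3}\leq C\bigl[(t-\sigma)^{-1}\|h(\sigma)\|_q+(t-\sigma)^{-1/2}\|\rho h(\sigma)\|_q\bigr].
\]
The four resulting integrals are Beta-function integrals in $\sigma\in(s,t)$; each produces a power of $(t-s)$ of the form $(t-s)^{\theta-1/2}$, $(t-s)^\theta$ or $(t-s)^{\theta+1/2}$, and after absorbing the higher powers using $t-s\leq\mathcal T$ the desired bound $C(t-s)^\theta[(t-s)^{-1/2}\|\rho f\|_q+\|\rho^2 f\|_q]$ emerges. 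For the localized estimate \eqref{evo-semi-loc-0} with $g\in L^q_{1,\sigma}(\mathbb R^3)$, I use \eqref{semi-wh-loc} in place of \eqref{semi-wh-0}; it costs only $(t-\sigma)^{-1}\|h(\sigma)\|_q$, and since now only $\|\rho g\|_q$ is available the estimate of $h$ reads $\|h(\sigma)\|_q\leq C(t-\sigma)^\theta[(\sigma-s)^{-1/2}+1]\|\rho g\|_q$, leading by the same Beta integral to the $(t-s)^{\theta-1/2}$ rate.

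The main technical point to watch is bookkeeping of the weighted norms: the factor $x$ in the rotational drift is exactly what upgrades the demand from $\rho f$ to $\rho^2 f$ in \eqref{evo-semi-wh-0}, and it is precisely this need that motivates the definition of $L^q_{2,\sigma}$ in the hypothesis. All other pieces are standard: the Tanabe identity, the H\"older regularity of $(\eta,\omega)$, the smoothing estimates of Proposition \ref{weighted-wh}, and the sharp semigroup bounds of Lemma \ref{lem-sg-wh}.
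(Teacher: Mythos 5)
Your proposal is correct and follows essentially the same route as the paper: the Tanabe-type identity obtained by differentiating $e^{-(t-\sigma)L_{\mathbb R^3}(t)}U(\sigma,s)f$ in $\sigma$, the bound on $\bigl({\mathcal L}(t)-{\mathcal L}(\sigma)\bigr)U(\sigma,s)f$ via the H\"older continuity of $(\eta,\omega)$ and the weighted smoothing estimates \eqref{wei-wh}, followed by \eqref{semi-wh-0} (resp.\ \eqref{semi-wh-loc}) and the Beta-integral computation. The bookkeeping of the weights, including the upgrade to $\|\rho^2 f\|_q$ caused by the factor $x$ in the rotational drift, matches the paper's argument exactly.
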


\begin{proof}
Let $f\in L^q_{1,\sigma}(\mathbb R^3)$, then we have $U(\sigma,s)f\in Y_q(\mathbb R^3)$ for all $\sigma>s$ by Propsition \ref{strong-wh}.
Since
\begin{equation*}
\begin{split}
&\quad U(t,s)f-e^{-(t-s)L_{\mathbb R^3}(t)}f  \\
&=\int_s^t \partial_\sigma \left(e^{-(t-\sigma)L_{\mathbb R^3}(t)}U(\sigma,s)f\right)\,d\sigma  \\
&=\int_s^t \Big[e^{-(t-\sigma)L_{\mathbb R^3}(t)}L_{\mathbb R^3}(t)U(\sigma,s)f
-e^{-(t-\sigma)L_{\mathbb R^3}(t)}L_{\mathbb R^3}(\sigma)U(\sigma,s)f\Big] \,d\sigma  \\
&=\int_s^t e^{-(t-\sigma)L_{\mathbb R^3}(t)}
\big({\mathcal L}(t)-{\mathcal L}(\sigma)\big)
U(\sigma,s)f\,d\sigma
\end{split}
\end{equation*}
in $L^q_\sigma(\mathbb R^3)$ with 
\begin{equation}
\begin{split}
&\quad \big({\mathcal L}(t)-{\mathcal L}(\sigma)\big)w  \\
&=-\Big[
\big(\eta(t)-\eta(\sigma)\big)\cdot\nabla w+\left\{\big(\omega(t)-\omega(\sigma)\big)\times x\right\}\cdot\nabla w
-\big(\omega(t)-\omega(\sigma)\big)\times w
\Big],
\end{split}
\label{diff-str-0}
\end{equation}
it follows from \eqref{wei-wh} and \eqref{semi-wh-0} 
together with \eqref{ass1-rigid} that
\begin{equation*}
\begin{split}
&\quad \|L_{\mathbb R^3}(t)\left(U(t,s)-e^{-(t-s)L_{\mathbb R^3}(t)}\right)f\|_{q,\mathbb R^3}  \\
&\leq \int_s^t \|L_{\mathbb R^3}(t)e^{-(t-\sigma)L_{\mathbb R^3}(t)}
\big({\mathcal L}(t)-{\mathcal L}(\sigma)\big)
U(\sigma,s)f\|_{q,\mathbb R^3} \,d\sigma  \\
&\leq C\int_s^t \Big\{(t-\sigma)^{-1+\theta}
\big(\|\rho \nabla U(\sigma,s)f\|_{q,\mathbb R^3}+\|U(\sigma,s)f\|_{q,\mathbb R^3}\big)   \\
&\qquad\qquad 
+(t-\sigma)^{-1/2+\theta}\big(\|\rho^2\nabla U(\sigma,s)f\|_{q,\mathbb R^3}+\|\rho U(\sigma,s)f\|_{q,\mathbb R^3}\big)\Big\}
  \,d\sigma  \\
&\leq C\int_s^t 
\Big[(t-\sigma)^{-1+\theta}\big\{(\sigma-s)^{-1/2}\|\rho f\|_{q,\mathbb R^3}+\|f\|_{q,\mathbb R^3}\big\}  \\
&\qquad\qquad +(t-\sigma)^{-1/2+\theta}\big\{(\sigma-s)^{-1/2}\|\rho^2 f\|_{q,\mathbb R^3}+\|\rho f\|_{q,\mathbb R^3}\big\}\Big] 
\,d\sigma   \\
&=C(t-s)^{-1/2+\theta}\|\rho f\|_{q,\mathbb R^3}
+C(t-s)^\theta \|\rho^2 f\|_{q,\mathbb R^3}
\end{split}
\end{equation*}
for $0\leq s<t\leq {\mathcal T}$,
which proves \eqref{evo-semi-wh-0}
if, in addition, $f\in L^q_{2,\sigma}(\mathbb R^3)$.
Finally, when taking the norm $\|\cdot\|_{q,B_R}$ of the left-hand side,
we employ \eqref{semi-wh-loc}
instead of \eqref{semi-wh-0} to find \eqref{evo-semi-loc-0}.
The proof is complete.
\end{proof}
\begin{remark}
For \eqref{evo-semi-wh-0}, we are forced to impose $\rho^2f\in L^q(\mathbb R^3)$ 
on $f$; on the other hand, it exhibits less singular behavior $(t-s)^{-1/2+\theta}$ than the one from general theory
of evolution operators of parabolic type, see
\eqref{evo-semi-bdd-0}, and this is bacause we do not have any time-dependent coefficient in the second order term
of ${\mathcal L}(t)$ given by \eqref{pre-proj-0}.
For this reason, in fact, it turns out that the behavior \eqref{evo-semi-bdd-0} near $t=s$ can be improved as $(t-s)^{-1/2+\theta}$ 
on the bounded domain $D_R$ as well along the same argument as above,
but this improvement is not needed later.
\label{rem-evo-semi}
\end{remark}

\section{Weighted estimate}
\label{sect-wei}

In this section we develop the weighted estimates of the evolution operator $T(t,s)$ on exterior domains 
to show Theorem \ref{weighted-est}.
To this end, we have to be back to the stage of construction of
a parametrix of the evolution operator 
due to Hansel and Rhandi \cite{HR14}, see also \cite[section 5]{Hi20}.
It is done in the following way by use of evolution operators in the whole space $\mathbb R^3$ and in the bounded
domain $D_7$.
We fix three cut-off functions
\begin{equation*}
\begin{split}
&\phi\in C_0^\infty(B_5), \qquad \phi=1\;\;\mbox{in $B_4$}, \\
&\phi_0\in C_0^\infty(B_3), \qquad \phi_0=1\;\;\mbox{in $B_2$}, \\
&\phi_1\in C_0^\infty(B_7), \qquad \phi_1=1\;\;\mbox{in $B_6$},
\end{split}
\end{equation*}
and set
\[
G=\{3<|x|<5\}, \quad G_0=\{1<|x|<3\}, \quad G_1=\{5<|x|<7\}.
\]
By $\mathbb B=\mathbb B_G$, $\mathbb B_0=\mathbb B_{G_0}$ and $\mathbb B_1=\mathbb B_{G_1}$
we denote the Bogovskii operator introduced in Lemma \ref{lem-bog}.

Let $q\in (1,\infty)$.
Given $f\in L^q_{\sigma}(D)$, we set
\begin{equation}
f_0=(1-\phi_0)f+\mathbb B_0[f\cdot\nabla\phi_0], 
\qquad
f_1=\phi_1f-\mathbb B_1[f\cdot\nabla\phi_1], 
\label{modi}
\end{equation}
as in \eqref{cutoff},
where $f_0$ is understood as its extension to $\mathbb R^3$ by setting zero outside $D$.
If, in particular, $f\in L^q_{\alpha,\sigma}(D)$ with $\alpha\geq 0$, see \eqref{sole-chara-wei}, then
we have $f_0\in L^q_{\alpha,\sigma}(\mathbb R^3)$ as well as $f_1\in L^q_\sigma(D_7)$
subject to
\begin{equation}
\|\rho^\alpha f_0\|_{q,\mathbb R^3}\leq C\|\rho^\alpha f\|_q, \qquad
\|f_1\|_{q,D_7}\leq C\|f\|_q
\label{data-est}
\end{equation}
by \eqref{bog-est1}.
We also observe
\begin{equation}
\begin{split}
&\|\nabla f_0\|_{q,\mathbb R^3}+\|\nabla f_1\|_{q,D_7}\leq C\|f\|_{W^{1,q}(D)}, \\
&\|\rho^\alpha \nabla f_0\|_{q,\mathbb R^3}\leq C\big(\|\rho^\alpha \nabla f\|_q+\|f\|_q\big),
\end{split}
\label{data-est2}
\end{equation}
if, in addition, $\nabla f\in L^q_\alpha(D)$ with $\alpha\geq 0$.

As a fine approximation of the evolution operator, it is natural to take
\begin{equation}
W(t,s)f=(1-\phi)U(t,s)f_0+\phi V(t,s)f_1+\mathbb B[(U(t,s)f_0-V(t,s)f_1)\cdot\nabla\phi]
\label{appro}
\end{equation}
which fulfills $W(s,s)f=f$,
where $U(t,s)$ is the evolution operator, see \eqref{rep-wh}, for the whole space problem
and $V(t,s)$ is the one for the interior problem over $D_7$, see Lemma \ref{lem-evo-bdd}.
We fix ${\mathcal T},\, m\in (0,\infty)$ and suppose \eqref{ever}. 
Let $0\leq s<t\leq {\mathcal T}$.
Let also $r\in [q,\infty]$ and $j\in \{0,1\}$, then
we see from \eqref{bog-est1}, \eqref{sm-bdd}, \eqref{1st-bdd}
\eqref{wei-wh}, \eqref{wei-wh2} with $(j,r)=(0,q)$, \eqref{data-est} and \eqref{data-est2} that
\begin{equation}
\|\rho^\alpha \nabla^j W(t,s)f\|_r\leq C(t-s)^{-j/2-(3/q-3/r)/2}\|\rho^\alpha f\|_q
\label{appro-est}
\end{equation}
for $f\in L^q_{\alpha,\sigma}(D)$ and that
\begin{equation}
\|\rho^\alpha\nabla W(t,s)f\|_q\leq C(t-s)^{-1/2+\delta}\big(\|\rho^\alpha \nabla f\|_q+\|f\|_q\big)
\label{appro-est-wei}
\end{equation}
for $f\in L^q_\sigma(D)$ with $\nabla f\in L^q_\alpha(D)$, where $\delta\in (0,1/2q)$ is arbitrary.

With the pressure $p_1(\cdot,t)$ on $D_7$ associated with $V(t,s)f_1$ in such a way that 
$\int_{D_7}p_1(t)\,dx=0$, we consider the pair of
\[
u:=W(t,s)f, \qquad p:=\phi p_1(t),
\]
which should obey
\begin{equation}
\begin{split}
&\partial_tu+{\mathcal L}(t)u+\nabla p+K(t,s)f=0, \qquad\mbox{div $u$}=0 \quad\mbox{in $D\times (s,\infty)$}, \\
&u|_{\partial D}=0, \qquad \lim_{|x|\to\infty}u=0, \qquad u(\cdot,s)=f,
\end{split}
\label{W-pde}
\end{equation}
where
${\mathcal L}(t)$ is the differential operator given by \eqref{pre-proj-0}.
The equation is understood as
\begin{equation}
\partial_tW(t,s)f+L(t)W(t,s)f=-PK(t,s)f,\qquad t\in (s,\infty), 
\label{W-eq}
\end{equation}
in $L^q_\sigma(D)$ as long as 
$f\in L^q_{1,\sigma}(D)$, yielding $f_0\in L^q_{1,\sigma}(\mathbb R^3)$
which ensures the desired regularity of $U(t,s)f_0$ by Proposition \ref{strong-wh}.
But
this does not matter in the 
present section
because the construction of the evolution operator is based
on the integral equation \eqref{ite-IE} below.
Here, the remainder term $K(t,s)f$ is given by
\begin{equation}
\begin{split}
&\quad K(t,s)f \\
=&-2\nabla\phi\cdot\nabla(Uf_0-Vf_1)-\{\Delta\phi+(\eta+\omega\times x)\cdot\nabla\phi\}(Uf_0-Vf_1) \\
&-(\nabla\phi)p_1-\mathbb B[(\partial_tUf_0-\partial_tVf_1)\cdot\nabla\phi]
+\Delta\mathbb B[(Uf_0-Vf_1)\cdot\nabla\phi]  \\
&+(\eta+\omega\times x)\cdot\nabla \mathbb B[(Uf_0-Vf_1)\cdot\nabla\phi]
-\omega\times \mathbb B[(Uf_0-Vf_1)\cdot\nabla\phi],
\end{split}
\label{remainder}
\end{equation}
where we abbreviate $Uf_0=U(t,s)f_0$ and $Vf_1=V(t,s)f_1$.
It follows from \eqref{bog-est1}--\eqref{bog-est2},
\eqref{sm-bdd}--\eqref{pressure-bdd}, \eqref{frac-bdd} with $\delta=0$, \eqref{high-bdd},
\eqref{wei-wh} with $\alpha=0$, \eqref{wei-wh2} with $\alpha=0$, \eqref{weak-wh},
\eqref{B-est-wh}, \eqref{data-est} and \eqref{data-est2} that
\begin{equation}
\|K(t,s)f\|_q\leq C(t-s)^{-(1+1/q)/2}\|f\|_q
\label{remain-est}
\end{equation}
and that
\begin{equation}
\begin{split}
&\quad \|\nabla K(t,s)f\|_q   \\
&\leq C\big(\|U(t,s)f_0\|_{W^{2,q}(G)}+\|\partial_tU(t,s)f_0\|_{q,G}+\|V(t,s)f_1\|_{W^{2,q}(D_7)}
+\|\partial_tV(t,s)f_1\|_{q,D_7}+\|\nabla p_1(t)\|_{q,D_7}\big)  \\
&\leq 
\left\{
\begin{array}{l}
C(t-s)^{-1}\|f\|_q, \\
C(t-s)^{-1+\delta}\|f\|_{W^{1,q}(D)},
\end{array}
\right.
\end{split}
\label{remain-gr-est}
\end{equation}
where $\delta\in (0,1/2q)$ is arbitrary.
The former of \eqref{remain-gr-est} will be used for \eqref{delta-est} in the next section,
whereas the latter soon plays a role via \eqref{remain-gr-wei}
to show \eqref{gZZest} below, leading to \eqref{wei-small1}.
Note that, thanks to \eqref{B-est-wh},
one does not need any weighted norm of $f$ in the latter estimate of \eqref{remain-gr-est} unlike \cite[Lemma 5.3]{Hi20}.

We now suppose $\alpha\in [0,3)$ and $q\in (\frac{3}{3-\alpha},\infty)$.
Since the support of $K(t,s)f$ is compact in $D$, 
\eqref{remain-est} and \eqref{remain-gr-est} respectively give
\begin{equation}
\|\rho^\alpha PK(t,s)f\|_q\leq C(t-s)^{-(1+1/q)/2}\|f\|_q
\label{remain-wei}
\end{equation}
and
\begin{equation}
\|\rho^\alpha \nabla PK(t,s)f\|_q\leq
\left\{
\begin{array}{l}
C(t-s)^{-1}\|f\|_q, \\
C(t-s)^{-1+\delta}\|f\|_{W^{1,q}(D)},
\end{array}
\right.
\label{remain-gr-wei}
\end{equation}
with $\delta\in (0,1/2q)$ on account of Lemma \ref{lem-proj-wei} as well as
\eqref{FK-wei}. 

In view of \eqref{W-pde}, the reasonable idea of Hansel and Rhandi \cite{HR14} for construction 
of the evolution operator $T(t,s)$ is to solve the integral equation
\begin{equation}
T(t,s)f=W(t,s)f+\int_s^t T(t,\tau)PK(\tau,s)f\,d\tau
\label{ite-IE}
\end{equation}
through the iteration scheme ($j=0,1,2,...$)
\begin{equation}
T_{j+1}(t,s)f=\int_s^tT_j(t,\tau)PK(\tau,s)f\,d\tau, \qquad
T_0(t,s)f=W(t,s)f,
\label{iteration}
\end{equation}
so that
\begin{equation}
T(t,s)f=\sum_{j=0}^\infty T_j(t,s)f
\label{evo-series}
\end{equation}
is convergent in $L^q_\sigma(D)$ absolutely and uniformly in $(t,s)$ with $0\leq s\leq t\leq {\mathcal T}$
for every $f\in L^q_\sigma(D)$, which follows from \eqref{appro-est} with $(\alpha,j,r)=(0,0,q)$ and \eqref{remain-est}
with the aid of Lemma \ref{lem-ite}, see \cite{HR14}.

We are in a position to show Theorem \ref{weighted-est}.
\medskip

\noindent
{\it Proof of Theorem \ref{weighted-est}}.
Let $\alpha\in [0,3)$, $q\in (\frac{3}{3-\alpha},\infty)$, $r\in [q,\infty]$ and $j\in \{0,1\}$.
In view of \eqref{appro-est}, \eqref{remain-wei} and \eqref{iteration}, we apply
Lemma \ref{lem-ite} 
with
\begin{equation*}
\begin{split}
&E_0=\nabla^j W, \quad Q=PK, \quad X_1=L^q_{\alpha,\sigma}(D), \quad X_2=L^r_\alpha(D), \\
&\beta=\frac{j}{2}+\frac{3}{2}\left(\frac{1}{q}-\frac{1}{r}\right), \quad \gamma=\frac{1}{2}\left(1+\frac{1}{q}\right)
\end{split}
\end{equation*}
to conclude \eqref{wei-est1} 
provided $1/q-1/r<(2-j)/3$, 
however, this restriction on $r$ (for $j=1$) can be removed by the semigroup property.

We next show \eqref{wei-small1}. 
We intend to prove merely the case $(j,r)=(1,q)$ since the other cases are verified easily
(when $r>q$, the case $j=1$ is reduced to the case $j=0$ by the semigroup property).
Given $\alpha\in [0,3)$ and $q\in (\frac{3}{3-\alpha},\infty)$,
let us introduce the space
\begin{equation}
Z_{q,\alpha}(D):=\{u\in L^q_\sigma(D);\, \nabla u\in L^q_\alpha(D)\}
\label{Z-ge}
\end{equation}
endowed with norm 
$\|u\|_{Z_{q,\alpha}(D)}=\|u\|_q+\|\rho^\alpha\nabla u\|_q$
and show that
\begin{equation}
\|T(t,s)f\|_{Z_{q,\alpha}(D)}\leq C(t-s)^{-1/2+\delta}\|f\|_{Z_{q,\alpha}(D)}
\label{gZZest}
\end{equation}
for all 
$f\in Z_{q,\alpha}(D)$,
where $\delta\in (0,1/2q)$. 
We notice that $Z_{q,1}(D)=Z_q(D)$, see \eqref{Z}, and that \eqref{gZZest} with $\alpha=1$ is found in 
\cite[proof of Lemma 5.4]{Hi20}.
Once we have \eqref{gZZest}, we are led to the desired behavior; in fact,
since $C_{0,\sigma}^\infty(D)$ 
is dense in $L^q_{\alpha,\sigma}(D)$ for $\alpha$ and $q$ specified in this theorem,
the approximation procedure together with \eqref{wei-est1} implies that
\[
\lim_{t\to s}\,(t-s)^{1/2}\|\rho^\alpha \nabla T(t,s)f\|_q=0
\]
for every $f\in L^q_{\alpha,\sigma}(D)$. 
By \eqref{appro-est} with $(\alpha,j,r)=(0,0,q)$ and \eqref{appro-est-wei} we know
\[
\|W(t,s)f\|_{Z_{q,\alpha}(D)}\leq C(t-s)^{-1/2+\delta}\|f\|_{Z_{q,\alpha}(D)},
\]
while it follows from \eqref{remain-wei} with $\alpha=0$ and the latter of \eqref{remain-gr-wei} that
\[
\|PK(t,s)f\|_{Z_{q,\alpha}(D)}\leq C(t-s)^{-1+\delta}\|f\|_{W^{1,q}(D)},
\]
if, in particular, $\delta$ is chosen as $\delta<\min\{1/2q,\, (1-1/q)/2\}$.
With those estimates at hand, we apply Lemma \ref{lem-ite} with
\begin{equation*}
E_0=W, \quad Q=PK, \quad X_1=X_2=Z_{q,\alpha}(D), \quad
\beta=\frac{1}{2}-\delta, \quad \gamma=1-\delta
\end{equation*}
to obtain \eqref{gZZest}.

Finally, we verify \eqref{IC-1}. 
Since $C_{0,\sigma}^\infty(D)$ is dense in $L^q_{\alpha,\sigma}(D)$ and since
$T(t,s)$ is bounded near $t=s$ from $L^q_{\alpha,\sigma}(D)$
to itself, see \eqref{wei-est1} with $(j,r)=(0,q)$,
it suffices to show \eqref{IC-1} when $f\in C_{0,\sigma}^\infty(D)$.
By the item 3 of Proposition \ref{so-far},
for such data, we have the relation
\[
T(t,s)f-f=-\int_s^t \partial_\sigma T(t,\sigma)f\, d\sigma
=-\int_s^t T(t,\sigma)L(\sigma)f\,d\sigma,
\]
from which together with \eqref{wei-est1} 
and \eqref{FK-wei} it follows that
\begin{equation*}
\|\rho^\alpha \big(T(t,s)f-f\big)\|_q
\leq C\int_s^t\|\rho^\alpha L(\sigma)f\|_q\,d\sigma 
\leq C\int_s^t\|\rho^\alpha{\mathcal L}(\sigma)f\|_q\,d\sigma,
\end{equation*}
where ${\mathcal L}(t)$ is the differential operator given by \eqref{pre-proj-0}.
In this way, we obtain
\[
\|\rho^\alpha \big(T(t,s)f-f\big)\|_q
\leq C(t-s)\sup_{0\leq \sigma\leq {\mathcal T}}\|\rho^\alpha {\mathcal L}(\sigma)f\|_q
\to 0\qquad (t\to s)
\]
for every $f\in C_{0,\sigma}^\infty(D)$.
The proof is complete.
\hfill
$\Box$

\section{Strong solution}
\label{sect-strong}

This section improves the item 2 of Proposition \ref{so-far}
to show that the evolution operator $T(t,s)$ gives us a strong solution as long as the initial velocity is taken from 
$L^q_{1,\sigma}(D)$, $q\in (\frac{3}{2},\infty)$.

\medskip
\noindent
{\it Proof of Theorem \ref{strong-sol}}.
We begin with the proof of the item 1.
Let $q\in (1,\infty)$.
It suffices to show
\begin{equation}
\|\nabla^2 T(t,s)f\|_q\leq C(t-s)^{-1}\|f\|_q
\label{2nd-est}
\end{equation}
for all $f\in L^q_\sigma(D)$, which combined with \eqref{sm} with $j=0$ leads to \eqref{2nd-sm} by the semigroup property.
Given 
$f\in L^q_\sigma(D)$,
we set $u(t)=T(t,s)f$, which belongs to $W^{1,q}(D)$ for $t>s$ by \eqref{sm}. 
By interpolation with \eqref{sm} we observe 
\[
\|u(t)\|_{[L^q_\sigma(D),\, L^q_\sigma(D)\cap W^{1,q}(D)]_{2\delta}}\leq C(t-s)^{-\delta}\|f\|_q
\]
provided that $\delta\in (0,1/2)$.
If, in particular, $\delta\in (0,1/2q)$, then the complex interpolation space in the left-hand side above
coincides with $D_q(A^\delta)$, which does not involve any boundary condition (except the vanishing normal trace) at $\partial D$,
see \cite{Fu} and  \cite[Section 2.3]{NS03} as well as \eqref{frac-ci}.
We thus obtain
\begin{equation}
\|u(t)\|_{D_q(A^\delta)}
\leq C(t-s)^{-\delta}\|f\|_q
\label{combi-1}
\end{equation}
for all $f\in L^q_\sigma(D)$ as long as $\delta\in (0,1/2q)$.

We next assume that $f\in D_q(A^\delta)$ for some $\delta\in (0,1)$, which is however restricted to $\delta\in (0,1/2q)$ later.
It then follows from Lemma \ref{lem-frac} that 
the functions $f_0$ and $f_1$ defined by \eqref{modi} satisfy
\[
f_0\in H^{2\delta}_q(\mathbb R^3)\cap L^q_\sigma(\mathbb R^3), \qquad
f_1\in D_q(A_7^\delta)
\]
subject to \eqref{est-data-0} with $R=7$.
By \eqref{wei-wh} and \eqref{wei-wh2} with $(\alpha,r)=(0,q)$ we find
\begin{equation}
\|\nabla^2U(t,s)f_0\|_{q,\mathbb R^3}\leq C(t-s)^{-1+\delta}\|f_0\|_{H^{2\delta}_q(\mathbb R^3)}
\label{wh-delta}
\end{equation}
for $\delta\in [0,\frac{1}{2})$
since
$H^{2\delta}_q(\mathbb R^3)=[L^q(\mathbb R^3),\, W^{1,q}(\mathbb R^3)]_{2\delta}$.
This together with \eqref{frac-bdd} and \eqref{est-data-0} with $R=7$
implies that $W(t,s)f$ given by \eqref{appro} enjoys
\begin{equation}
\|W(t,s)f\|_{W^{2,q}(D)}\leq C(t-s)^{-1+\delta}\|f\|_{D_q(A^\delta)}
\label{appro-pre2}
\end{equation}
for $\delta\in [0,\frac{1}{2})$.

We now assume $\delta \in (0,1/2q)$, then we have 
$PK(t,s)f\in D_q(A^\delta)=[L^q_\sigma(D),\,L^q_\sigma(D)\cap W^{1,q}(D)]_{2\delta}$, where $K(t,s)f$ is the remainder term
given by \eqref{remainder}.
Since the projection $P$ is bounded on $W^{1,q}(D)$,
we deduce from \eqref{remain-est}
and the former of \eqref{remain-gr-est} that
\begin{equation}
\|PK(t,s)f\|_{D_q(A^\delta)}
\leq \|PK(t,s)f\|_{W^{1,q}(D)}^{2\delta}\|PK(t,s)f\|_q^{1-2\delta} 
\leq C(t-s)^{-1+\zeta}\|f\|_q
\label{delta-est}
\end{equation}
where
\begin{equation}
\zeta:=\frac{1-2\delta}{2}\left(1-\frac{1}{q}\right).
\label{zeta}
\end{equation}
Note that the restriction above on $\delta$ is needed since 
$PK(t,s)f$ does not fulfill the Dirichlet boundary condition.
In view of \eqref{iteration}, \eqref{appro-pre2} and \eqref{delta-est},
applying Lemma \ref{lem-ite} with
\begin{equation*}
E_0=W, \quad Q=PK, \quad X_1=D_q(A^\delta), 
\quad X_2=W^{2,q}(D), \quad
\beta=1-\delta, \quad \gamma=1-\zeta 
\end{equation*}
leads us to
\begin{equation}
\|T(t,s)f\|_{W^{2,q}(D)}
\leq C(t-s)^{-1+\delta}\|f\|_{D_q(A^\delta)}
\label{combi-2}
\end{equation}
for all $f\in D_q(A^\delta)$ as long as $\delta\in (0,1/2q)$.
Let us combine \eqref{combi-2} with \eqref{combi-1} to furnish
\begin{equation*}
\|T(t,s)f\|_{W^{2,q}(D)}
\leq C(t-s)^{-1+\delta}\|T((t+s)/2,s)f\|_{D_q(A^\delta)}
\leq C(t-s)^{-1}\|f\|_q
\end{equation*}
which implies \eqref{2nd-est}.

Let $q\in (3/2,\infty)$ and $f\in C_{0,\sigma}^\infty(D)\subset Z_q(D)$, then 
it follows from Proposition \ref{so-far} that
$T(t,s)f\in Y_q(D)\subset D_q(A)$ for all $t>s$.
By \eqref{2nd-est} we deduce \eqref{A-evo}
for all $f\in C_{0,\sigma}^\infty(D)$.
With this at hand, one can conclude \eqref{A-evo} for general $f\in L^q_\sigma(D)$ since $A$ is closed.

Assuming still $q\in (3/2,\infty)$,
we know from Proposition \ref{so-far} that $u(t)=T(t,s)f$ is indeed a strong solution to \eqref{evo}
in $L^q_\sigma(D)$ when $f\in Z_q(D)$.
We immediately see from Theorem \ref{weighted-est} with the aid of the semigroup property that
it is also the case even for $f\in L^q_{1,\sigma}(D)$, 
however, \eqref{Y-Z} and \eqref{wei-est1} imply worse smoothing rate
$\|\partial_tu(t)\|_q\leq C(t-s)^{-3/2+\delta}\|\rho f\|_q$
near $t=s$ than desired, where $\delta\in (0,1/2q)$.
But one can improve this rate as \eqref{str-est}; in fact,
by \eqref{wei-est1} and \eqref{A-evo} we find
\begin{equation*}
\begin{split}
\|\partial_tT(t,s)f\|_q
=\|L(t)T(t,s)f\|_q 
&\leq C\|T(t,s)f\|_{Y_q(D)}  \\
&\leq C(t-s)^{-1}\|f\|_q+C(t-s)^{-1/2}\|\rho f\|_q.
\end{split}
\end{equation*}
Likewise, \eqref{str-local-est} is also verified
because the projection $P$ is not needed
in the drift term of \eqref{ig}.
Finally, estimates of the pressure follow from the equation.
This completes the proof of Theorem \ref{strong-sol}.
\hfill
$\Box$

\section{H\"older estimate}
\label{sect-hoe}

In this section we deduce the H\"older estimates of the evolution operator $T(t,s)$ in $t$ and then those of the function 
$v(t)$ given by \eqref{duha} under the suitable assumption on the forcing term $g$.
\bigskip

\noindent
{\it Proof of Theorem \ref{hoelder}}.
We set $\beta=(3/q-3/r)/2$ for simplicity of notation and 
assume
$f\in L^q_{1,\sigma}(D)$, $q\in (\frac{3}{2},\infty)$.
Let $0\leq s<\tau<t\leq {\mathcal T}$.
When $j=0$ and $r\in [q,\infty)$, the proof is easy; indeed, combining \eqref{str-est} with \eqref{wei-est1} implies that
\begin{equation}
\begin{split}
&\quad \|T(t,s)f-T(\tau,s)f\|_r  \\
&\leq\int_\tau^t\|\partial_\sigma T(\sigma,s)f\|_r\,d\sigma  \\
&\leq C\int_\tau^t \Big\{(\sigma-s)^{-1}\|T((\sigma+s)/2,s)f\|_r+(\sigma-s)^{-1/2}
\|\rho T((\sigma+s)/2,s)f\|_r\Big\}\,d\sigma  \\
&\leq C\int_\tau^t \Big\{(\sigma-s)^{-\beta-1}\|f\|_q+(\sigma-s)^{-\beta-1/2}
\|\rho f\|_q\Big\} \,d\sigma  \\
&\leq C(t-\tau)\Big\{(\tau-s)^{-\beta-1}\|f\|_q+(\tau-s)^{-\beta-1/2}\|\rho f\|_q\Big\}.
\end{split}
\label{hoe-easy}
\end{equation}
On the other hand, we have
\begin{equation}
\|\nabla^j T(t,s)f-\nabla^j T(\tau,s)f\|_r
\leq C(\tau-s)^{-j/2-\beta}\|f\|_q
\label{rough}
\end{equation}
for $j\in \{0,1\}$ by \eqref{sm}.
Given $\mu\in (0,1]$, we compute the product
$\mbox{\eqref{hoe-easy}}^{\mu}\mbox{\eqref{rough}}^{1-\mu}_{j=0}$
to conclude 
\begin{equation}
\|T(t,s)f-T(\tau,s)f\|_r\leq C(t-\tau)^\mu\Big\{
(\tau-s)^{-\beta-\mu}\|f\|_q+(\tau-s)^{-\beta-\mu/2}\|\rho f\|_q\Big\}
\label{hoe-est1}
\end{equation}
which yields \eqref{hoe-est} with $j=0$ and $r\in [q,\infty)$.

The other cases are rather nontrivial because one can estimate neither
$\|\partial_\sigma T(\sigma,s)f\|_\infty$ nor $\|\nabla\partial_\sigma T(\sigma,s)f\|_r$.
The proof for those cases is based on
\begin{equation}
\begin{split}
\nabla^jT(t,s)f-\nabla^jT(\tau,s)f
&=\nabla^j(T(t,\tau)-I)T(\tau,s)f  \\
&=-\int_\tau^t \nabla^j \partial_\sigma T(t,\sigma)T(\tau,s)f\,d\sigma  \\
&=-\int_\tau^t \nabla^j T(t,\sigma)L(\sigma)T(\tau,s)f\,d\sigma.
\end{split}
\label{hoe-based}
\end{equation}
Note that $T(\tau,s)f\in Y_q(D)\subset D(L(\sigma))$ for every $\sigma\in (\tau,t)$ since $f\in L^q_{1,\sigma}(D)$, see 
Theorem \ref{strong-sol} together with the item 3 of Proposition \ref{so-far}.
Let $j=1$ and $r\in [q,\infty)$, then it follows from \eqref{sm}, \eqref{wei-est1} and \eqref{2nd-sm} that
\begin{equation}
\begin{split}
&\quad \quad \|\nabla T(t,s)f-\nabla T(\tau,s)f\|_r   \\
&\leq C\int_\tau^t (t-\sigma)^{-1/2}
\|L(\sigma)T(\tau,s)f\|_r\,d\sigma   \\
&\leq C(t-\tau)^{1/2}
\Big\{(\tau-s)^{-1-\beta}\|f\|_q+(\tau-s)^{-1/2-\beta}\|\rho f\|_q\Big\}.
\end{split}
\label{hoe-split}
\end{equation}
We thus compute 
$\mbox{\eqref{hoe-split}}^{2\mu}\mbox{\eqref{rough}}^{1-2\mu}_{j=1}$ to furnish 
\begin{equation}
\|\nabla T(t,s)f-\nabla T(\tau,s)f\|_r
\leq C(t-\tau)^\mu \Big\{(\tau-s)^{-1/2-\beta-\mu}\|f\|_q+(\tau-s)^{-1/2-\beta}\|\rho f\|_q\Big\}
\label{hoe-est2}
\end{equation}
for every $\mu\in (0,1/2]$, which leads to
\eqref{hoe-est} with $j=1$ and $r\in [q,\infty)$.

It remains to show \eqref{hoe-est} with $r=\infty$.
Given $\mu\in (0,1/2)$ with $\mu\geq \frac{1}{2}-\frac{3}{2q}$, we set $p=3/(1-2\mu)$, then we have $p\geq q$.
In view of \eqref{sm} with $(j,r)=(1,\infty)$, that is covered by \eqref{wei-est1} as well, 
we utilize \eqref{hoe-based} and follow the computation 
as in \eqref{hoe-split} to infer
\begin{equation}
\begin{split}
&\quad \|\nabla T(t,s)f-\nabla T(\tau,s)f\|_\infty   \\
&\leq C\int_\tau^t (t-\sigma)^{-1+\mu}\|L(\sigma)T(\tau,s)f\|_p\,d\sigma  \\
&\leq C(t-\tau)^\mu \Big\{(\tau-s)^{-1/2-\beta-\mu}\|f\|_q+(\tau-s)^{-\beta-\mu}\|\rho f\|_q\Big\}
\end{split}
\label{hoe-est3}
\end{equation}
with $\beta=3/2q$.
By the similar fashion we find
\begin{equation}
\quad \|T(t,s)f-T(\tau,s)f\|_\infty 
\leq C(t-\tau)^\mu \Big\{(\tau-s)^{-\beta-\mu}\|f\|_q+(\tau-s)^{-\beta+1/2-\mu}\|\rho f\|_q\Big\}
\label{hoe-est4}
\end{equation}
for given $\mu\in (0,1)$ with $\mu\geq 1-\frac{3}{2q}$.
For $0<\mu<1-\frac{j}{2}-\frac{3}{2q}$ with $j\in \{0,1\}$,
we have only to combine \eqref{hoe-est3}--\eqref{hoe-est4} with \eqref{rough} as we did.
This completes the proof of Theorem \ref{hoelder}.
\hfill
$\Box$
\bigskip

\noindent
{\it Proof of Corollary \ref{duha-hoelder}}.
Let us keep every notation in the proof of Theorem \ref{hoelder}, however,
we take $\mu$ 
as in \eqref{duha-hoe-exp}.
We apply \eqref{hoe-est} and \eqref{sm} to
\[
\nabla^j v(t)-\nabla^j v(\tau)
=\int_s^\tau \nabla^j \big(T(t,\sigma)-T(\tau,\sigma)\big)g(\sigma)\,d\sigma
+\int_\tau^t\nabla^j T(t,\sigma)g(\sigma)\,d\sigma
=:I+J,
\]
then we have
\begin{equation*}
\|I\|_r\leq C\int_s^\tau (t-\tau)^\mu (\tau-\sigma)^{-j/2-\beta-\mu}
\|\rho g(\sigma)\|_q\, d\sigma  \\
\leq C(t-\tau)^\mu (\tau-s)^{-j/2-\beta-\mu+1-\kappa}[g]_{q,1,\kappa}
\end{equation*}
and
\begin{equation*}
\|J\|_r \leq
C\int_\tau^t (t-\sigma)^{-j/2-\beta}\|g(\sigma)\|_q \,d\sigma
\leq C(t-\tau)^{1-j/2-\beta}(\tau-s)^{-\kappa}[g]_{q,0,\kappa}
\end{equation*}
on account of \eqref{ass1-force}.
These estimates at once imply \eqref{duha-hoe-est}.
\hfill
$\Box$

\section{Regularity of the Duhamel term}
\label{sect-duha}

This section is devoted to the proof of Theorem \ref{duha-strong}.
To this end, we begin with a sketch of the idea in the next several paragraphs.
In view of the construction of the evolution operator \eqref{evo-series}, 
the dominant part of singularity $(t-s)^{-1}$ 
near $t=s$ in \eqref{str-est}
comes from the operator $T_0(t,s)=W(t,s)$ given by \eqref{appro}.
Having this in mind, let us split the evolution operator into
\begin{equation}
T(t,s)=W(t,s)+S(t,s)
\label{T-split}
\end{equation}
through the scheme \eqref{iteration},
where
\begin{equation}
S(t,s)f=\sum_{j=1}^\infty T_j(t,s)f
\label{S-series}
\end{equation}
for $f\in L^q_\sigma(D)$.

Let $q\in (3/2,\infty)$.
For every $f\in L^q_{1,\sigma}(D)$, we have the item 3 of Theorem \ref{strong-sol}.
We also
know that $W(t,s)f$
possesses the same regularity as in \eqref{st-cl} for $T(t,s)f$ 
and fulfills the equation
\eqref{W-eq}; thus, so does $S(t,s)f$, along with
\begin{equation}
\partial_tS(t,s)f+L(t)S(t,s)f=PK(t,s)f, \qquad t\in (s,\infty),
\label{W-S}
\end{equation}
in $L^q_\sigma(D)$.

Let us consider the operator $T_1(t,s)$ defined by \eqref{iteration} with $j=0$.
From \eqref{frac-bdd}, \eqref{str-est-wh},
\eqref{data-est}, \eqref{appro-est} with $(\alpha,j,r)=(1,1,q)$ and
\eqref{appro-pre2}, it follows that
\begin{equation}
\begin{split}
&\|W(t,s)g\|_{Y_q(D)}\leq C(t-s)^{-1+\delta}\|g\|_{D_q(A^\delta)}+C(t-s)^{-1/2}\|\rho g\|_q,  \\
&\|\partial_tW(t,s)h\|_q\leq C(t-s)^{-1}\|h\|_q+C(t-s)^{-1/2}\|\rho h\|_q,
\end{split}
\label{appro-est2-0}
\end{equation}
for all $g\in D_q(A^\delta)\cap L^q_{1,\sigma}(D)$ with $\delta\in [0,1/2)$ and $h\in L^q_{1,\sigma}(D)$.
We choose $\delta\in (0,1/2q)$, and then
gather \eqref{appro-est2-0} with $g=PK(\tau,s)f$, \eqref{delta-est} and \eqref{remain-wei} to obtain
\begin{equation}
\begin{split}
\|T_1(t,s)f\|_{Y_q(D)}
&\leq C\int_s^t (t-\tau)^{-1+\delta}
\left(\|PK(\tau,s)f\|_{D_q(A^\delta)}+\|\rho PK(\tau,s)f\|_q\right)\,d\tau  \\
&\leq C\int_s^t (t-\tau)^{-1+\delta}(\tau-s)^{-1+\zeta}\,d\tau\;
\|f\|_q \\
&=C(t-s)^{-1+\delta+\zeta}\|f\|_q
\end{split}
\label{T1-est}
\end{equation}
for every $f\in L^q_\sigma(D)$,
where $\zeta$ is given by \eqref{zeta}.
In view of \eqref{iteration}, \eqref{T1-est} and \eqref{remain-wei} with $\alpha=0$,
we can apply Lemma \ref{lem-ite} with
\begin{equation*}
E_0=T_1, \quad Q=PK, \quad X_1=L^q_\sigma(D), \quad X_2=Y_q(D), \quad
\beta=1-\delta-\zeta, \quad \gamma=\frac{1}{2}\left(1+\frac{1}{q}\right)
\end{equation*}
to conclude
\begin{equation}
\begin{split}
&\|S(t,s)f\|_{Y_q(D)}
\leq C(t-s)^{-1+\delta+\zeta}\|f\|_q,  \\
&\|\partial_tS(t,s)g\|_q\leq C(t-s)^{-(1+1/q)/2}\|g\|_q,
\end{split}
\label{S-est}
\end{equation}
for all $f\in L^q_\sigma(D)$ and $g\in L^q_{1,\sigma}(D)$.
The latter estimate follows from the former one
together with \eqref{remain-est} on account of \eqref{W-S}, where we
note that
\begin{equation}
1-\zeta>
(1+1/q)/2>1-\delta-\zeta=(1+1/q)/2-\delta/q
\label{de-ka}
\end{equation}
by \eqref{zeta}.
Thus, we have actually less singular behavior near $t=s$ even without any weighted norm of data in \eqref{S-est} and,
therefore, $S(t,s)$ must be harmless.
When $q\in (3,\infty)$, the splitting \eqref{T-split}--\eqref{S-series} of the evolution operator would not be needed, 
see Remark \ref{rem-duha-alt} below,
however, we should study the regularity of the Duhamel term \eqref{duha} under the reasonable condition $q\in (3/2,\infty)$.
For this aim, it is better to split the evolution operator and to focus on the analysis of the dominant part $W(t,s)$.

Now, let us consider the function $v(t)$ given by \eqref{duha}
with $g(t)$ satisfying \eqref{ass2-force}--\eqref{ass3-force}.
We look into the detailed structute of $W(t,s)$ to 
prove the desired regularity \eqref{duha-cl} together with the representation
\begin{equation}
\begin{split}
\partial_tv(t)=&\; g(t)
+\int_s^t \partial_tS(t,\sigma)g(\sigma)\,d\sigma 
+\int_s^{(s+t)/2}\partial_tW(t,\sigma)g(\sigma)\,d\sigma  \\
&+\int_{(s+t)/2}^t(1-\phi)\partial_tU(t,\sigma)\big(g_0(\sigma)-g_0(t)\big)\,d\sigma \\
&-\int_{(s+t)/2}^t (1-\phi)L_{\mathbb R^3}(t)\left(U(t,\sigma)-e^{-(t-\sigma)L_{\mathbb R^3}(t)}\right)
g_0(t)\,d\sigma  \\
&-(1-\phi)\left(I-e^{-\frac{t-s}{2}L_{\mathbb R^3}(t)}\right)g_0(t)  \\
&+\int_{(s+t)/2}^t \phi\partial_tV(t,\sigma)\big(g_1(\sigma)-g_1(t)\big)\,d\sigma \\
&-\int_{(s+t)/2}^t \phi L_7(t)\left(V(t,\sigma)-e^{-(t-\sigma)L_7(t)}\right)
g_1(t)\,d\sigma \\
&-\phi\left(I-e^{-\frac{t-s}{2}L_7(t)}\right)g_1(t)  \\
&+\int_{(s+t)/2}^t \partial_t\mathbb B
\left[\Big(U(t,\sigma)g_0(\sigma)-V(t,\sigma)g_1(\sigma)\Big)\cdot\nabla\phi\right]\,d\sigma,
\end{split}
\label{repre-C1}
\end{equation}
where $L_{\mathbb R^3}(t)$ is the generator of $U(t,s)$ on $\mathbb R^3$, see \eqref{ig-wh}, and
$L_7(t)$ given by \eqref{ig-bdd} with $R=7$ is the generator of $V(t,s)$ on the bounded domain $D_7$, while
$g_0(t)$ and $g_1(t)$ are the modifications of $g(t)$ as in \eqref{modi}.
Since $g=(1-\phi)g_0+\phi g_1$, the formula \eqref{repre-C1}, that we are going to show,
is rewritten as
\begin{equation}
\begin{split}
\partial_tv(t)=
&\int_s^t \partial_tS(t,\sigma)g(\sigma)\,d\sigma
+\int_s^{(s+t)/2}\partial_tW(t,\sigma)g(\sigma)\,d\sigma   \\
&+\int_{(s+t)/2}^t(1-\phi)\partial_tU(t,\sigma)\big(g_0(\sigma)-g_0(t)\big)\,d\sigma \\
&-\int_{(s+t)/2}^t (1-\phi)L_{\mathbb R^3}(t)\left(U(t,\sigma)-e^{-(t-\sigma)L_{\mathbb R^3}(t)}\right)
g_0(t)\,d\sigma  \\
&+(1-\phi) e^{-\frac{t-s}{2}L_{\mathbb R^3}(t)}g_0(t)  \\
&+\int_{(s+t)/2}^t \phi\partial_tV(t,\sigma)\big(g_1(\sigma)-g_1(t)\big)\,d\sigma \\
&-\int_{(s+t)/2}^t \phi L_7(t)\left(V(t,\sigma)-e^{-(t-\sigma)L_7(t)}\right)
g_1(t)\,d\sigma \\
&+\phi e^{-\frac{t-s}{2}L_7(t)}g_1(t)  \\
&+\int_{(s+t)/2}^t \partial_t\mathbb B
\left[\Big(U(t,\sigma)g_0(\sigma)-V(t,\sigma)g_1(\sigma)\Big)\cdot\nabla\phi\right]\,d\sigma.
\end{split}
\label{repre-C1-alt}
\end{equation}

Moreover, we will justify the representation
\begin{equation}
\begin{split}
L(t)v(t)=&\int_s^t L(t)S(t,\sigma)g(\sigma)\,d\sigma  
+\int_s^{(s+t)/2}L(t)W(t,\sigma)g(\sigma)\,d\sigma   \\
&+\int_{(s+t)/2}^t P\left[(1-\phi)L_{\mathbb R^3}(t)U(t,\sigma)\big(g_0(\sigma)-g_0(t)\big)\right]\,d\sigma  \\
&+\int_{(s+t)/2}^tP\left[(1-\phi)L_{\mathbb R^3}(t)\left(U(t,\sigma)-e^{-(t-\sigma)L_{\mathbb R^3}(t)}\right)g_0(t)\right]\,d\sigma  \\
&+P\left[(1-\phi)\left(I-e^{-\frac{t-s}{2}L_{\mathbb R^3}(t)}\right)g_0(t)\right]  \\
&+\int_{(s+t)/2}^t P\left[\phi L_7(t)V(t,\sigma)\big(g_1(\sigma)-g_1(t)\big)\right]\,d\sigma  \\
&+\int_{(s+t)/2}^t P\left[\phi L_7(t)\left(V(t,\sigma)-e^{-(t-\sigma)L_7(t)}\right)g_1(t)\right]\,d\sigma  \\
&+P\left[\phi\left(I-e^{-\frac{t-s}{2}L_7(t)}\right)g_1(t)\right]  \\
&-\int_{(s+t)/2}^t P \Big(K(t,\sigma)g(\sigma)+\partial_t\mathbb B
\left[\Big(U(t,\sigma)g_0(\sigma)-V(t,\sigma)g_1(\sigma)\Big)\cdot\nabla\phi\right]\Big) \,d\sigma.
\end{split}
\label{repre-Lv}
\end{equation}
Taking into account $P[(1-\phi)g_0+\phi g_1]=Pg=g$, we rewrite \eqref{repre-Lv} as
\begin{equation}
\begin{split}
L(t)v(t)=&\; g(t)+\int_s^t L(t)S(t,\sigma)g(\sigma)\,d\sigma  
+\int_s^{(s+t)/2}L(t)W(t,\sigma)g(\sigma)\,d\sigma   \\
&+\int_{(s+t)/2}^t P\left[(1-\phi)L_{\mathbb R^3}(t)U(t,\sigma)\big(g_0(\sigma)-g_0(t)\big)\right]\,d\sigma  \\
&+\int_{(s+t)/2}^tP\left[(1-\phi)L_{\mathbb R^3}(t)\left(U(t,\sigma)-e^{-(t-\sigma)L_{\mathbb R^3}(t)}\right)g_0(t)\right]\,d\sigma  \\
&-P\left[(1-\phi) e^{-\frac{t-s}{2}L_{\mathbb R^3}(t)}g_0(t)\right]  \\
&+\int_{(s+t)/2}^t P\left[\phi L_7(t)V(t,\sigma)\big(g_1(\sigma)-g_1(t)\big)\right]\,d\sigma  \\
&+\int_{(s+t)/2}^t P\left[\phi L_7(t)\left(V(t,\sigma)-e^{-(t-\sigma)L_7(t)}\right)g_1(t)\right]\,d\sigma  \\
&-P\left[\phi e^{-\frac{t-s}{2}L_7(t)}g_1(t)\right]  \\
&-\int_{(s+t)/2}^t P\Big(K(t,\sigma)g(\sigma)+\partial_t\mathbb B
\left[\Big(U(t,\sigma)g_0(\sigma)-V(t,\sigma)g_1(\sigma)\Big)\cdot\nabla\phi\right]
\Big)\,d\sigma.
\end{split}
\label{repre-Lv-alt}
\end{equation}
Here, how one derives the representation \eqref{repre-Lv}--\eqref{repre-Lv-alt} is interpreted as follows.
We have to consider ${\mathcal L}(t)W(t,\sigma)g(\sigma)$ 
in the integrand above, where ${\mathcal L}(t)$ is the differential operator \eqref{pre-proj-0}.
Recalling how the remainder term 
\eqref{remainder} is obtained, we can describe
\begin{equation*}
\begin{split}
&\quad {\mathcal L}(t)W(t,\sigma)g(\sigma)  \\
&=(1-\phi){\mathcal L}(t)U(t,\sigma)g_0(\sigma)+\phi {\mathcal L}(t)V(t,\sigma)g_1(\sigma)  \\
&\quad -K(t,\sigma)g(\sigma)-(\nabla \phi)p_1-\partial_t\mathbb B
\left[\Big(U(t,\sigma)g_0(\sigma)-V(t,\sigma)g_1(\sigma)\Big)\cdot\nabla\phi\right],
\end{split}
\end{equation*}
where $p_1$ is the pressure associated with $V(t,\sigma)g_1(\sigma)$ on the bounded domain $D_7$.
Since
$(I-P_{D_7}){\mathcal L}(t)V(t,\sigma)g_1(\sigma)=-\nabla p_1$ with $P_{D_7}$ being the projection on $D_7$,
we have
\[
\phi{\mathcal L}(t)V(t,\sigma)g_1(\sigma)-(\nabla\phi)p_1
=\phi L_7(t)V(t,\sigma)g_1(\sigma)-\nabla(\phi p_1).
\]
Applying the projection $P$, we are led to
\begin{equation}
\begin{split}
&\quad L(t)W(t,\sigma)g(\sigma)  \\
&=P\left[(1-\phi)L_{\mathbb R^3}(t)U(t,\sigma)g_0(\sigma)\right]+
P\left[\phi L_7(t)V(t,\sigma)g_1(\sigma)\right]  \\
&\quad -P\Big( K(t,\sigma)g(\sigma)+\partial_t\mathbb B
\left[\Big(U(t,\sigma)g_0(\sigma)-V(t,\sigma)g_1(\sigma)\Big)\cdot\nabla\phi\right]\Big).
\end{split}
\label{auxi-deri}
\end{equation}
\begin{lemma}
Suppose that $\eta$ and $\omega$ fulfill \eqref{ass1-rigid} for some $\theta\in (0,1]$.
Let $q\in (\frac{3}{2},\infty)$ and assume that, given $s$ and ${\mathcal T}$ with
$0\leq s<{\mathcal T}<\infty$,
the function $g(t)$ fulfills \eqref{ass2-force}--\eqref{ass3-force} for some $\mu\in (0,1]$ and $\kappa\in [0,1)$.
Let $w_1(t)$ and $w_2(t)$ be the functions defined by
the right-hand sides of \eqref{repre-C1-alt} and \eqref{repre-Lv-alt}, respectively.
Then they are well-defined and satisfy the relation
\begin{equation}
Pw_1(t)+w_2(t)=g(t), \qquad t\in (s,{\mathcal T}]
\label{pre-inhomo}
\end{equation}
in $L^q_\sigma(D)$.

For each 
$m\in (0,\infty)$ and $R\in (1,\infty)$, there are constants
$C=C({\mathcal T},m,q,\mu,\kappa,\theta,D)>0$ and
$C^\prime=C^\prime({\mathcal T},m,q,R,\mu,\kappa,\theta,D)>0$ such that
\begin{equation}
\|w_1(t)\|_q+\|w_2(t)\|_q\leq C(t-s)^{-\kappa}\big([g]_{q,2,\kappa}+\{g\}_{q,\mu,\kappa}\big)
\label{pre-duha-str}
\end{equation}
\begin{equation}
\|w_1(t)\|_{q,D_R}+\|w_2(t)\|_{q,D_R}\leq C^\prime(t-s)^{-\kappa}\big([g]_{q,1,\kappa}+\{g\}_{q,\mu,\kappa}\big)
\label{pre-duha-str-loc}
\end{equation}
for all $t\in (s,{\mathcal T}]$ whenever \eqref{ever} is satisfied, where
$[g]_{q,\alpha,\kappa}$ and $\{g\}_{q,\mu,\kappa}$ are respectively
defined as \eqref{g-wei} and \eqref{g-hoe-semi}.

If, in particular, the generator $L(t)=L$ is independent of $t$, then we have \eqref{pre-duha-str} in which
$[g]_{q,2,\kappa}$ is replaced by $[g]_{q,1,\kappa}$ under less assumptions \eqref{ass1-force} and \eqref{ass3-force}.
\label{lem-before}
\end{lemma}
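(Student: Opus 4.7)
The plan is to prove the lemma in two stages. First, each of the integrals and endpoint terms appearing in $w_1(t)$ and $w_2(t)$ is shown to be well-defined in $L^q(D)$ and to satisfy the asserted bounds \eqref{pre-duha-str}--\eqref{pre-duha-str-loc}. Second, the algebraic identity $Pw_1(t)+w_2(t)=g(t)$ is established by term-by-term pairing using the evolution equations \eqref{W-eq} and \eqref{W-S} together with the identity $(1-\phi)g_0(t)+\phi g_1(t)=g(t)$ (which holds since $g_0=g_1=g$ on $\operatorname{supp}\nabla\phi$ and elsewhere $(1-\phi)(1-\phi_0)+\phi\phi_1\equiv 1$). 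The analytic inputs are \eqref{S-est} and \eqref{appro-est2-0}, the $U$-estimates of Proposition \ref{strong-wh}, the $V$-estimates \eqref{frac-bdd}, the semigroup comparisons \eqref{evo-semi-wh-0} and \eqref{evo-semi-bdd-0}, and the Bogovskii bound \eqref{bog-est2} combined with \eqref{weak-wh} and \eqref{pressure-bdd}.

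For the estimates I proceed term by term. The two $S$-integrals are controlled by \eqref{S-est} and time integration, producing a bounded multiple of $(t-s)^{-\kappa}[g]_{q,0,\kappa}$. The two $W$-integrals on $[s,(s+t)/2]$ use \eqref{appro-est2-0} together with $t-\sigma\geq(t-s)/2$, leaving a bound of the form $(t-s)^{-\kappa}[g]_{q,1,\kappa}$. For the H\"older-correction pairs in $w_1$ and $w_2$ on $[(s+t)/2,t]$ containing $\partial_tU(t,\sigma)[g_0(\sigma)-g_0(t)]$ or $L_{\mathbb R^3}(t)U(t,\sigma)[g_0(\sigma)-g_0(t)]$, I apply \eqref{str-est-wh}; the unweighted part is controlled by $\{g\}_{q,\mu,\kappa}$ via \eqref{ass3-force}, whereas the weighted part uses the interpolation
\begin{equation*}
\|\rho(g_0(\sigma)-g_0(t))\|_q\leq\|g_0(\sigma)-g_0(t)\|_q^{1/2}\|\rho^2(g_0(\sigma)-g_0(t))\|_q^{1/2}
\end{equation*}
combined with \eqref{ass2-force}, after which AM-GM produces a clean $[g]_{q,2,\kappa}+\{g\}_{q,\mu,\kappa}$. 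The $V$-linked H\"older pairs are analogous but simpler, using \eqref{frac-bdd} with $\delta=0$ and requiring no weight. The semigroup-comparison terms involving $U-e^{-(t-\sigma)L_{\mathbb R^3}(t)}$ and its $V$-analogue are bounded directly by \eqref{evo-semi-wh-0} and \eqref{evo-semi-bdd-0}; this is the sole place where $\|\rho^2 g_0(t)\|_q$ enters, hence where $[g]_{q,2,\kappa}$ is needed. The endpoint terms $e^{-(t-s)L_{\mathbb R^3}(t)/2}g_0(t)$ and $e^{-(t-s)L_7(t)/2}g_1(t)$ reduce to uniform $L^q$-boundedness of these semigroups. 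The Bogovskii and $PK$ integrals reduce via \eqref{bog-est2}, \eqref{weak-wh}, \eqref{pressure-bdd}, and \eqref{remain-est}. For the local bound, the same scheme with \eqref{evo-str-loc-wh}, \eqref{semi-wh-loc}, and \eqref{evo-semi-loc-0} in place of the global estimates replaces every $\|\rho^2 f\|_q$ by $\|\rho f\|_q$, so $\|w_1\|_{q,D_R}\leq C^\prime(t-s)^{-\kappa}([g]_{q,1,\kappa}+\{g\}_{q,\mu,\kappa})$; the projected $w_2$-terms on $D_R$ are then handled via the second-stage identity by noting that the sum $w_1$ is divergence-free on $D$ (a direct computation of divergences shows the remainder equals $\nabla\phi\cdot(g_1(t)-g_0(t))\equiv 0$) and vanishes on $\partial D$ (each piece either has support away from $\partial D$ or inherits the homogeneous Dirichlet condition from $V$); hence $Pw_1=w_1$ and $\|w_2\|_{q,D_R}\leq\|g\|_{q,D_R}+\|w_1\|_{q,D_R}$.

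For the identity $Pw_1+w_2=g$, I pair each term of $Pw_1$ with its primed counterpart in $w_2$. The $S$-pair combines via \eqref{W-S} to $\int_s^tPK(t,\sigma)g(\sigma)\,d\sigma$ (using $S(t,\sigma)g(\sigma)\in L^q_\sigma(D)$), while the $W$-pair on $[s,(s+t)/2]$ combines via \eqref{W-eq} to $-\int_s^{(s+t)/2}PK(t,\sigma)g(\sigma)\,d\sigma$. For each remaining $U$-linked pair, the identity $\partial_tU(t,\sigma)=-L_{\mathbb R^3}(t)U(t,\sigma)$ from Proposition \ref{strong-wh} (and the semigroup identity $\partial_\sigma e^{-(t-\sigma)L_{\mathbb R^3}(t)}=L_{\mathbb R^3}(t)e^{-(t-\sigma)L_{\mathbb R^3}(t)}$) forces the $P$-image of the $w_1$-contribution to equal minus the $w_2$-contribution, so the pair cancels; the same argument handles the $V$-linked pairs on $D_7$. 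The Bogovskii/remainder pair collapses, after cancellation of $P\partial_t\mathbb B[\cdots]$, to $-\int_{(s+t)/2}^tPK(t,\sigma)g(\sigma)\,d\sigma$. Summing,
\begin{equation*}
\int_s^tPK(t,\sigma)g(\sigma)\,d\sigma-\int_s^{(s+t)/2}PK(t,\sigma)g(\sigma)\,d\sigma-\int_{(s+t)/2}^tPK(t,\sigma)g(\sigma)\,d\sigma=0,
\end{equation*}
so only the $g(t)$ term of $w_2$ survives.

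The main obstacle is the weighted H\"older correction: \eqref{ass3-force} supplies only an unweighted H\"older bound and \eqref{ass2-force} only a pointwise weighted bound, and the two must be bridged to control $\|\rho(g_0(\sigma)-g_0(t))\|_q$ with the correct singularity. Resolving this by weight interpolation plus AM-GM is the delicate calculation that forces the strengthened assumption \eqref{ass2-force}; a secondary subtlety, handled through the solenoidality of $w_1$, is the local bound for the projected terms in $w_2$. In the autonomous case $L(t)\equiv L$, the operators $U$ and $V$ coincide with $e^{-(t-\sigma)L_{\mathbb R^3}}$ and $e^{-(t-\sigma)L_7}$, so the semigroup-comparison pairs vanish identically and \eqref{evo-semi-wh-0} is never invoked; moreover \eqref{ass1-force} directly supplies $\|\rho(g_0(\sigma)-g_0(t))\|_q\leq C(\sigma-s)^{-\kappa}[g]_{q,1,\kappa}$ without interpolation, so only $[g]_{q,1,\kappa}$ appears in the final bound, giving the last assertion of the lemma.
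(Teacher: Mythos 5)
Your overall scheme — term-by-term estimation of $w_1,w_2$ with the listed linear estimates, pairing of terms via \eqref{W-eq}, \eqref{W-S} and $\partial_tU=-L_{\mathbb R^3}(t)U$ to get \eqref{pre-inhomo}, and the observation that \eqref{evo-semi-wh-0} is what forces $[g]_{q,2,\kappa}$ while its local counterpart \eqref{evo-semi-loc-0} and \eqref{evo-str-loc-wh} remove the extra weight near the body — is the paper's argument. One avoidable detour: for the weighted part of the H\"older-correction integral you interpolate $\|\rho(g_0(\sigma)-g_0(t))\|_q\le\|g_0(\sigma)-g_0(t)\|_q^{1/2}\|\rho^2(g_0(\sigma)-g_0(t))\|_q^{1/2}$. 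The paper simply uses the triangle inequality $\|\rho(g(\sigma)-g(t))\|_q\le\|\rho g(\sigma)\|_q+\|\rho g(t)\|_q$, which costs only $[g]_{q,1,\kappa}$ because the accompanying factor $(t-\sigma)^{-1/2}$ is integrable; no bridging of \eqref{ass2-force} and \eqref{ass3-force} is needed there. Your interpolation is valid and does not spoil \eqref{pre-duha-str}, but your claim that this is ``the delicate calculation that forces \eqref{ass2-force}'' contradicts your own (correct) earlier statement that the semigroup-comparison term is the sole source of $[g]_{q,2,\kappa}$; it is also incompatible with the autonomous refinement unless you switch back to the triangle inequality there, as you do.

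The one genuine flaw is your justification of $Pw_1=w_1$, which you need for $\|w_2\|_{q,D_R}\le\|g\|_{q,D_R}+\|w_1\|_{q,D_R}$ (and this reduction is indeed essentially forced, since $P$ is nonlocal and the global norm of $P[(1-\phi)L_{\mathbb R^3}(t)(U-e^{-(t-\sigma)L_{\mathbb R^3}(t)})g_0(t)]$ cannot avoid $[g]_{q,2,\kappa}$). You assert that ``a direct computation of divergences shows the remainder equals $\nabla\phi\cdot(g_1(t)-g_0(t))\equiv0$.'' This computation does not go through term by term: after the regrouping that produces \eqref{repre-C1-alt}, the individual pieces are not solenoidal, and the $\nabla\phi$-contributions involve $U(t,\sigma)$, $V(t,\sigma)$, $e^{-\frac{t-s}{2}L_{\mathbb R^3}(t)}$ and $e^{-\frac{t-s}{2}L_7(t)}$ applied to $g_0$ and $g_1$; these are nonlocal operators and their outputs do \emph{not} coincide on $\operatorname{supp}\nabla\phi$ even though $g_0=g_1=g$ there. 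For instance, $\operatorname{div}\bigl[(1-\phi)e^{-\frac{t-s}{2}L_{\mathbb R^3}(t)}g_0(t)+\phi e^{-\frac{t-s}{2}L_7(t)}g_1(t)\bigr]=\nabla\phi\cdot\bigl(e^{-\frac{t-s}{2}L_7(t)}g_1(t)-e^{-\frac{t-s}{2}L_{\mathbb R^3}(t)}g_0(t)\bigr)$, which is not zero. The correct route is to recognize $w_1$ as the $L^q$-limit, as $\varepsilon\to0$, of $T(t,t-\varepsilon)g(t-\varepsilon)+\int_s^{t-\varepsilon}\partial_tT(t,\sigma)g(\sigma)\,d\sigma$, each of which lies in the closed subspace $L^q_\sigma(D)$ (equivalently, undoing the regrouping reconstitutes $\int\partial_tW(t,\sigma)g(\sigma)\,d\sigma$ with solenoidal integrand); this is exactly the $\varepsilon$-approximation carried out in the proof of Theorem \ref{duha-strong}, and it should be invoked here rather than a pointwise divergence count.
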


\begin{proof}
We make use of 
\eqref{bog-est2}, 
\eqref{pressure-bdd}--\eqref{evo-semi-bdd-0}, 
\eqref{frac-bdd} with $\delta=0$,
\eqref{weak-wh}, \eqref{str-est-wh},
\eqref{evo-semi-wh-0}--\eqref{evo-semi-loc-0},
\eqref{data-est},
\eqref{remain-est}, \eqref{appro-est2-0}
and \eqref{S-est}
together with the assumptions \eqref{ass2-force}--\eqref{ass3-force}
to deduce \eqref{pre-duha-str}--\eqref{pre-duha-str-loc}.
Among many terms, we here describe estimates of merely two terms in which we see the role of \eqref{ass3-force}
and the reason why $[g]_{q,2,\kappa}$ is needed:
\begin{equation*}
\begin{split}
&\quad \left\|\int_{(s+t)/2}^t(1-\phi)\partial_tU(t,\sigma)\big(g_0(\sigma)-g_0(t)\big)\,d\sigma\right\|_q   \\
&\leq C\int_{(s+t)/2}^t\Big[
(t-\sigma)^{-1}\|g(\sigma)-g(t)\|_q
+(t-\sigma)^{-1/2}\|\rho\big(g(\sigma)-g(t)\big)\|_q
\Big]\,d\sigma  \\
&\leq C\int_{(s+t)/2}^t (t-\sigma)^{-1+\mu}(\sigma-s)^{-\kappa-\mu}\,d\sigma \,\{g\}_{q,\mu,\kappa}
+C\int_{(s+t)/2}^t (t-\sigma)^{-1/2}\big\{(\sigma-s)^{-\kappa}+(t-s)^{-\kappa}\big\}\,d\sigma \,[g]_{q,1,\kappa}  \\
&\leq C(t-s)^{-\kappa}\{g\}_{q,\mu,\kappa}+C(t-s)^{-\kappa+1/2}[g]_{q,1,\kappa}
\end{split}
\end{equation*}
and
\begin{equation*}
\begin{split}
&\quad \left\|\int_{(s+t)/2}^t(1-\phi)L_{\mathbb R^3}(t)\big(U(t,\sigma)-e^{-(t-\sigma)L_{\mathbb R^3}(t)}\big)g_0(t)\,d\sigma\right\|_q   \\
&\leq C\int_{(s+t)/2}^t\Big[
(t-\sigma)^{-1/2+\theta}\|\rho g(t)\|_q
+(t-\sigma)^\theta\|\rho^2 g(t)\|_q
\Big]\,d\sigma  \\
&\leq C(t-s)^{-\kappa+1/2+\theta}[g]_{q,1,\kappa}+C(t-s)^{-\kappa+1+\theta}[g]_{q,2,\kappa}.
\end{split}
\end{equation*}
Estimates of the other terms are also derived readily.
By \eqref{W-eq} and \eqref{W-S} along with
\[
P\partial_tW(t,s)f=\partial_tW(t,s)f,\qquad
P\partial_tS(t,s)f=\partial_tS(t,s)f
\] 
for every $f\in L^q_{1,\sigma}(D)$, we furnish \eqref{pre-inhomo}.
Especially for the autonomous case, the terms that need \eqref{evo-semi-wh-0}, see the latter estimate above,
are absent in \eqref{repre-C1-alt} and \eqref{repre-Lv-alt}.
Hence, we have the last assertion.
\end{proof}
\begin{remark}
When $q\in (3,\infty)$, we have \eqref{wei-est1} with $\alpha=2$, which enables us to deduce the analogous estimate
of $\|L(t)\big(T(t,s)-e^{-(t-s)L(t)}\big)f\|_q$ for $f\in L^q_{2,\sigma}(D)$ to \eqref{evo-semi-wh-0} 
by the same argument as in the proof of Lemma \ref{lem-evo-semi}
with the aid of the similar estimate of 
$\|L(t)e^{-(t-s)L(t)}g\|_r$ for $g\in L^r_{1,\sigma}(D)$ with $r\in (3/2,\infty)$ to \eqref{semi-wh-0}.
As a consequence, 
the regularity \eqref{des-reg} below is available provided $q\in (3,\infty)$, subject to 
\begin{equation}
\begin{split}
\partial_tv(t)=
&\int_s^{(s+t)/2}\partial_tT(t,\sigma)g(\sigma)\,d\sigma
+\int_{(s+t)/2}^t \partial_tT(t,\sigma)\big(g(\sigma)-g(t)\big)\,d\sigma  \\
&-\int_{(s+t)/2}^tL(t)\big(T(t,\sigma)-e^{-(t-\sigma)L(t)}\big)g(t)\,d\sigma 
+e^{-\frac{t-s}{2}L(t)}g(t)
\end{split}
\label{other-repre-C1}
\end{equation}
and
\begin{equation}
\begin{split}
L(t)v(t)=
&\int_s^{(s+t)/2}L(t)T(t,\sigma)g(\sigma)\,d\sigma
+\int_{(s+t)/2}^t L(t)T(t,\sigma)\big(g(\sigma)-g(t)\big)\,d\sigma  \\
&+\int_{(s+t)/2}^tL(t)\big(T(t,\sigma)-e^{-(t-\sigma)L(t)}\big)g(t)\,d\sigma 
+\big(I-e^{-\frac{t-s}{2}L(t)}\big) g(t),
\end{split}
\label{other-repre-Lv}
\end{equation}
both of which make sense for such $q$.
These representations \eqref{other-repre-C1}--\eqref{other-repre-Lv} 
are simpler than \eqref{repre-C1-alt} and \eqref{repre-Lv-alt}.
The proof is also easier without splitting
\eqref{T-split}--\eqref{S-series} of the evolution operator.
Since \eqref{FK-wei} with $\alpha=2$ holds for $q>3$, one may expect the representations 
\eqref{other-repre-C1}--\eqref{other-repre-Lv} in many situations,
however, the condition $q\in (3,\infty)$ is not sharp to get the regularity \eqref{des-reg} below.
The advantage of our approach is that there is no further restriction on the summability exponent for
weighted estimates on the whole space $\mathbb R^3$, see Proposition \ref{weighted-wh}. 
\label{rem-duha-alt}
\end{remark}

Once we have the desired regularity 
\begin{equation}
v\in C^1((s,{\mathcal T}];\, L^q_\sigma(D)), \qquad
v(t)\in D_q(L(t))\quad\forall\,t\in (s,{\mathcal T}]
\label{des-reg}
\end{equation}
together with \eqref{repre-C1-alt} and \eqref{repre-Lv-alt},
we are immediately led to the equation \eqref{duha-eq} from \eqref{pre-inhomo} since $P\partial_tv(t)=\partial_tv(t)$.
By \eqref{des-reg}, \eqref{ass3-force} and \eqref{duha-eq} we see that $L(\cdot)v\in C((s,{\mathcal T}];\, L^q_\sigma(D))$.

If, in particular, the generator $L(t)=L$ is independent of $t$,
we have \eqref{ell-L} by Lemma \ref{lem-apri}, which
along with the continuity of $Lv$ in $t$, that we have just observed, and \eqref{duha-hoe-est} give
$v\in C((s,{\mathcal T}];\, W^{2,q}(D))$.

Moreover, we have
\begin{equation}
\|\partial_tv(t)\|_q+\|L(t)v(t)\|_q 
\leq C(t-s)^{-\kappa}
\big([g]_{q,2,\kappa}+\{g\}_{q,\mu,\kappa}\big)
\label{duha-str-noch}
\end{equation}
\begin{equation}
\|\partial_tv(t)\|_{q,D_R}+\|L(t)v(t)\|_{q,D_R}
\leq C(t-s)^{-\kappa}
\big([g]_{q,1,\kappa}+\{g\}_{q,\mu,\kappa}\big)
\label{duha-str-local}
\end{equation}
by \eqref{repre-C1-alt}, \eqref{repre-Lv-alt} and \eqref{pre-duha-str}--\eqref{pre-duha-str-loc}.
On the other hand, we use
\eqref{wei-est1} to find
\begin{equation*}
\|\rho\nabla v(t)\|_q\leq C\int_s^t (t-\sigma)^{-1/2}\|\rho g(\sigma)\|_q\,d\sigma,
\end{equation*}
yielding \eqref{duha-drift-est}.
This combined with \eqref{des-reg}--\eqref{duha-str-noch} 
implies that $v(t)\in Y_q(D)$ for all $t\in (s,{\mathcal T}]$, see \eqref{Y}, and that
\begin{equation}
\begin{split}
\|Av(t)\|_q
&=\|L(t)v(t)+(\eta(t)+\omega(t)\times x)\cdot\nabla v(t)-\omega(t)\times v(t)\|_q  \\
&\leq C(t-s)^{-\kappa}\big([g]_{q,2,\kappa}+\{g\}_{q,\mu,\kappa}\big)
\end{split}
\label{est-Av}
\end{equation}
with $A$ being the Stokes operator on the exterior domain $D$, see \eqref{stokes},
where we have the following less rate near $t=s$ for lower order terms 
\[
\|\nabla^jv(t)\|_q\leq C(t-s)^{-\kappa+1-j/2}[g]_{q,0,\kappa}, \qquad j\in \{0,1\}.
\]
Since $\| v\|_{W^{2,q}(D)}\leq C(\|Av\|_q+\|v\|_q)$, we get \eqref{duha-str-est} for $\|\nabla^2v(t)\|_q$.

If, in particular, the generator $L(t)=L$ is independent of $t$,
then $[g]_{q,2,\kappa}$ can be replaced by $[g]_{q,1,\kappa}$ in several estimates deduced in the preceding paragraph
under less assumptions \eqref{ass1-force} and \eqref{ass3-force}
on account of the last statement in Lemma \ref{lem-before}.

By the same manner as in \eqref{est-Av}, it follows from \eqref{duha-str-local} that
\begin{equation}
\begin{split}
\|Av(t)\|_{q,D_R}
&\leq \|L(t)v(t)\|_{q,D_R}+C\|\nabla v(t)\|_q+C\|v(t)\|_q  \\
&\leq 
C(t-s)^{-\kappa}\big([g]_{q,1,\kappa}+\{g\}_{q,\mu,\kappa}\big)
\end{split}
\label{Av-local}
\end{equation}
since the projection $P$ is not needed in the drift term of \eqref{ig}.
By virtue of \eqref{Av-local} with $D_R$ replaced by $D_{2R}$,
we make use of \eqref{St-loc-est-0} to conclude \eqref{duha-str-loc-est} for $\|\nabla^2v(t)\|_{q,D_R}$.

Let us end up with completion of the proof of Theorem \ref{duha-strong}.

\medskip
\noindent
{\it Proof of Theorem \ref{duha-strong}.}
Let $q\in (3/2,\infty)$.
We fix $s$ and ${\mathcal T}$ such that $0\leq s<{\mathcal T}<\infty$.
From what we have just observed above, our task is to prove \eqref{des-reg} along with \eqref{repre-C1-alt} and \eqref{repre-Lv-alt}.
Let us take any compact interval $J\subset (s,{\mathcal T}]$, then it suffices to show
\begin{equation}
v\in C^1(J;\,L^q_\sigma(D)), \qquad \partial_tv(t)=w_1(t)\quad\forall\, t\in J,
\label{reg-cpt1}
\end{equation}
\begin{equation}
v(t)\in D_q(L(t)), \qquad L(t)v(t)=w_2(t)\quad\forall\,t\in J,
\label{reg-cpt2}
\end{equation}
where $w_1(t)$ and $w_2(t)$ are the functions defined by the right-hand sides of \eqref{repre-C1-alt} and \eqref{repre-Lv-alt},
respectively.
Consider
\[
v_\varepsilon(t)=\int_s^{t-\varepsilon}T(t,\sigma)g(\sigma)\,d\sigma \qquad (t\in J),
\]
with $\varepsilon >0$ being small enough in such a way that $s<\frac{s+t}{2}<t-\varepsilon$ for all $t\in J$.
By the item 3 of Theorem \ref{strong-sol} we find
\[
v_\varepsilon \in C^1(J;\,L^q_\sigma(D)), \qquad
v_\varepsilon(t)\in 
D_q(L(t)) \quad\forall\, t\in J
\]
with
\[
\partial_t v_\varepsilon(t)
=T(t,t-\varepsilon)g(t-\varepsilon)
+\int_s^{t-\varepsilon}\big(\partial_tS(t,\sigma)+
\partial_tW(t,\sigma)\big)g(\sigma)\,d\sigma,
\]
\[
L(t)v_\varepsilon(t)=
\int_s^{t-\varepsilon}\big(L(t)S(t,\sigma)+
L(t)W(t,\sigma)\big)g(\sigma)\,d\sigma.
\]

In view of \eqref{appro} and by taking into account \eqref{auxi-deri}
one can compute
\begin{equation*}
\begin{split}
&\int_s^{t-\varepsilon}\partial_tW(t,\sigma)g(\sigma)\,d\sigma  \\
=&\int_s^{(s+t)/2}\partial_tW(t,\sigma)g(\sigma)\,d\sigma
+\int_{(s+t)/2}^{t-\varepsilon}(1-\phi)\partial_tU(t,\sigma)\big(g_0(\sigma)-g_0(t)\big)\,d\sigma \\
&-\int_{(s+t)/2}^{t-\varepsilon} (1-\phi)L_{\mathbb R^3}(t)\left(U(t,\sigma)-e^{-(t-\sigma)L_{\mathbb R^3}(t)}\right)
g_0(t)\,d\sigma  \\  
&-(1-\phi)\left(e^{-\varepsilon L_{\mathbb R^3}(t)}-e^{-\frac{t-s}{2}L_{\mathbb R^3}(t)}\right)g_0(t)  \\ 
&+\int_{(s+t)/2}^{t-\varepsilon} \phi\partial_tV(t,\sigma)\big(g_1(\sigma)-g_1(t)\big)\,d\sigma \\ 
&-\int_{(s+t)/2}^{t-\varepsilon} \phi L_7(t)\left(V(t,\sigma)-e^{-(t-\sigma)L_7(t)}\right) 
g_1(t)\,d\sigma \\ 
&-\phi\left(e^{-\varepsilon L_7(t)}-e^{-\frac{t-s}{2}L_7(t)}\right)g_1(t)  \\ 
&+\int_{(s+t)/2}^{t-\varepsilon} \partial_t\mathbb B 
\left[\Big(U(t,\sigma)g_0(\sigma)-V(t,\sigma)g_1(\sigma)\Big)\cdot\nabla\phi\right]\,d\sigma                                                                  
\end{split}
\end{equation*}
as well as
\begin{equation*}
\begin{split}
&\int_s^{t-\varepsilon} L(t)W(t,\sigma)g(\sigma)\,d\sigma  \\
=&\int_s^{(s+t)/2}L(t)W(t,\sigma)g(\sigma)\,d\sigma
+\int_{(s+t)/2}^{t-\varepsilon} P\left[(1-\phi)L_{\mathbb R^3}(t)U(t,\sigma)\big(g_0(\sigma)-g_0(t)\big)\right]\,d\sigma  \\ 
&+\int_{(s+t)/2}^{t-\varepsilon} 
P\left[(1-\phi)L_{\mathbb R^3}(t)\left(U(t,\sigma)-e^{-(t-\sigma)L_{\mathbb R^3}(t)}\right)g_0(t)\right]\,d\sigma  \\ 
&+P\left[(1-\phi)\left(e^{-\varepsilon L_{\mathbb R^3}(t)}-e^{-\frac{t-s}{2}L_{\mathbb R^3}(t)}\right)g_0(t)\right]  \\
&+\int_{(s+t)/2}^{t-\varepsilon} P\left[\phi L_7(t)V(t,\sigma)\big(g_1(\sigma)-g_1(t)\big)\right]\,d\sigma  \\
&+\int_{(s+t)/2}^{t-\varepsilon} P\left[\phi L_7(t)\left(V(t,\sigma)-e^{-(t-\sigma)L_7(t)}\right)g_1(t)\right]\,d\sigma  \\ 
&+P\left[\phi\left(e^{-\varepsilon L_7(t)}-e^{-\frac{t-s}{2}L_7(t)}\right)g_1(t)\right]  \\
&-\int_{(s+t)/2}^{t-\varepsilon} P\Big(K(t,\sigma)g(\sigma)+\partial_t\mathbb B
\left[\Big(U(t,\sigma)g_0(\sigma)-V(t,\sigma)g_1(\sigma)\Big)\cdot\nabla\phi\right]
\Big)\,d\sigma.
\end{split}
\end{equation*}

By 
\eqref{ass2-force}--\eqref{ass3-force}, 
\eqref{bog-est2},
\eqref{pressure-bdd}--\eqref{evo-semi-bdd-0}, \eqref{frac-bdd} with $\delta=0$,
\eqref{weak-wh}, \eqref{str-est-wh}, \eqref{evo-semi-wh-0}, 
\eqref{data-est},
\eqref{remain-est}, 
\eqref{S-est}--\eqref{de-ka} 
together with
\eqref{con-Tg}, \eqref{con-LRg} 
and \eqref{con-whg}, we deduce that,
with some $C>0$ dependent on $J$, however, independent of $\varepsilon$,
\begin{equation}
\begin{split}
&\quad \sup_{t\in J}\Big(\|\partial_t v_\varepsilon(t)-w_1(t)\|_q
+\|L(t)v_\varepsilon(t)-w_2(t)\|_q\Big)  \\
&\leq C\Big[
\varepsilon^\mu \{g\}_{q,\mu,\kappa}+
\varepsilon^{1+\theta}[g]_{q,2,\kappa}+
\varepsilon^{1/2}[g]_{q,1,\kappa}+
\big(\varepsilon^\theta+\varepsilon^{(1-1/q)/2}\big)[g]_{q,0,\kappa}
\Big]  \\
&\quad +\sup_{t\in J}\Big[
\|T(t,t-\varepsilon)g(t-\varepsilon)-g(t)\|_q
+\|e^{-\varepsilon L_{\mathbb R^3}(t)}g_0(t)-g_0(t)\|_{q,\mathbb R^3}
+\|e^{-\varepsilon L_7(t)}g_1(t)-g_1(t)\|_{q,D_7}
\Big]
\end{split}
\label{unif-conv}
\end{equation}
which goes to zero as $\varepsilon \to 0$.
Here, it is convenient to use \eqref{repre-C1} and \eqref{repre-Lv} rather than \eqref{repre-C1-alt} and \eqref{repre-Lv-alt},
respectively.
We thus conclude \eqref{reg-cpt1} and \eqref{reg-cpt2} since $L(t)$ is a closed operator.

If, in particular, the generator $L(t)=L$ is independent of $t$,
the term $\varepsilon^{1+\theta}[g]_{q,2,\kappa}$
arising from \eqref{evo-semi-wh-0} 
is absent in \eqref{unif-conv},
see also the description just before Lemma \ref{lem-evo-semi}.
Hence, the argument above works well without any change under less assumptions \eqref{ass1-force} and \eqref{ass3-force}.
The proof is complete.
\hfill
$\Box$

\section{Navier-Stokes flow}
\label{sect-ns}

Once we have fine linear theory, the proof of Theorem \ref{ns-thm} is standard as in the classic literature
\cite{FK64, Ka84}, nonetheless, we describe it for completeness.
In what follows we always assume that $\eta$ and $\omega$ fulfill \eqref{ass3-rigid} and \eqref{ass4-rigid}
for some $\vartheta\in (0,1]$ and $\gamma\in [0,1)$, and this will not be mentioned even in
the statement of propositions and so on.

Let us begin with the uniqueness of solutions, independently of the existence. 
\begin{proposition}
Suppose $v_0\in L^q_{\sigma}(D)$ for some $q\in (3,\infty)$.
Let ${\mathcal T}\in (0,\infty)$.
Then the function $v(t)$, which is of class
\begin{equation}
v\in L^\infty(0,{\mathcal T};\, L^q_\sigma(D)), \qquad t^{1/2}\nabla v\in L^\infty(0,{\mathcal T};\, L^q(D))
\label{uni-cl}
\end{equation}
and satisfies \eqref{ns-int} in $L^q_\sigma(D)$ on the interval $(0,{\mathcal T})$, is at most one.
\label{prop-uni}
\end{proposition}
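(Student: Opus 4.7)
The plan is to subtract the two integral equations and run a singular Gronwall argument on the difference $w := v_1 - v_2$. Writing $v_1\cdot\nabla v_1 - v_2\cdot\nabla v_2 = v_1\cdot\nabla w + w\cdot\nabla v_2$, the data $T(t,0)v_0$ and the $PF$-integral cancel, so \eqref{ns-int} reduces to
$$
w(t) = -\int_0^t T(t,s)P\bigl[v_1\cdot\nabla w + w\cdot\nabla v_2 + b\cdot\nabla w + w\cdot\nabla b\bigr](s)\,ds.
$$
Since $v_1, v_2, b, w$ are all divergence-free, every bilinear term can be rewritten in divergence form, e.g.\ $v_1\cdot\nabla w = \mathrm{div}(v_1\otimes w)$, yielding
$$
w(t) = -\int_0^t T(t,s)P\,\mathrm{div}\bigl[v_1\otimes w + w\otimes v_2 + b\otimes w + w\otimes b\bigr](s)\,ds.
$$

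Next I would invoke the divergence-form smoothing estimate
$$
\|T(t,s)P\,\mathrm{div}\,F\|_q \leq C(t-s)^{-1/2-3/(2q)}\|F\|_{q/2},
$$
derived by duality from the gradient smoothing \eqref{sm}: for $\phi\in C^\infty_{0,\sigma}(D)$ one has the pairing $\langle T(t,s)P\,\mathrm{div}\,F,\phi\rangle = -\langle F,\nabla T(t,s)^*\phi\rangle$, and the backward-in-time adjoint evolution operator $T(t,s)^*$ on $L^{q'}_\sigma(D)$ satisfies the same gradient smoothing as $T(t,s)$ by the parallel construction, producing the exponent $\tfrac{1}{2}+(\tfrac{3}{q/2}-\tfrac{3}{q})/2 = \tfrac{1}{2}+\tfrac{3}{2q}$. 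Combined with the H\"older bound $\|F(s)\|_{q/2} \leq CK\|w(s)\|_q$, where $K := \sup_{0<s<{\mathcal T}_0}(\|v_1(s)\|_q + \|v_2(s)\|_q + \|b(s)\|_q) < \infty$ by \eqref{uni-cl} and \eqref{lift-est}, this gives
$$
\|w(t)\|_q \leq CK\int_0^t (t-s)^{-\alpha}\|w(s)\|_q\,ds,\qquad \alpha := \tfrac{1}{2}+\tfrac{3}{2q}.
$$
The hypothesis $q>3$ is exactly what ensures $\alpha<1$, so the kernel is weakly singular and integrable.

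Because $\|w(\cdot)\|_q \in L^\infty(0,{\mathcal T}_0)$ by \eqref{uni-cl}, the above weakly singular Volterra inequality forces $w\equiv 0$ via the Mittag--Leffler (singular Gronwall) lemma: iterating $n$ times bounds $\|w(t)\|_q$ by a constant multiple of $(CK)^n\Gamma(1-\alpha)^n t^{n(1-\alpha)}/\Gamma(1+n(1-\alpha))$, which tends to $0$ as $n\to\infty$ for each fixed $t\in [0,{\mathcal T}_0]$ by Stirling's formula. The main obstacle is justifying the divergence-form smoothing estimate for $T(t,s)P\,\mathrm{div}$, since it is not among the estimates stated in Proposition \ref{so-far} or Theorems \ref{weighted-est}--\ref{strong-sol} and requires passing through the adjoint evolution operator; if one wishes to avoid this, an alternative is to close the argument using the coupled norm $[w](t) := \|w(t)\|_q + t^{1/2}\|\nabla w(t)\|_q$ and the $L^{q/2}\!\to\!L^q$ smoothing of \eqref{sm} for $j\in\{0,1\}$ together with Sobolev $\|v_i\|_\infty \leq C\|v_i\|_q^{1-3/q}\|\nabla v_i\|_q^{3/q}$, at the cost of a more intricate but still subcritical Volterra system for $[w](t)$ on a short initial interval, followed by a standard continuation.
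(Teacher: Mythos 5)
Your primary route (rewriting the bilinear terms as $P\,\mathrm{div}$ of tensor products and closing a scalar Volterra inequality in $\|w\|_q$ alone) is genuinely different from the paper's, and it has one real gap, which you yourself flag: the smoothing estimate $\|T(t,s)P\,\mathrm{div}\,F\|_q\leq C(t-s)^{-1/2-3/(2q)}\|F\|_{q/2}$ is nowhere established in this paper, and the duality argument you sketch requires the gradient smoothing $\|\nabla T(t,s)^*\phi\|_{(q/2)'}\leq C(t-s)^{-1/2-3/(2q)}\|\phi\|_{q'}$ for the \emph{backward adjoint} evolution operator. That estimate is true (it is proved in \cite{Hi18} by running the whole construction for the adjoint system), but ``by the parallel construction'' is not a proof, and one also has to extend $T(t,s)P\,\mathrm{div}$ from smooth $F$ to $F\in L^{q/2}$ by density. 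So as written the main argument does not close within the resources of this paper. It is worth noting that if one does import the adjoint estimates, your route is actually stronger than the paper's: it yields uniqueness in the class $v\in L^\infty(0,{\mathcal T};L^q_\sigma(D))$ alone, without the hypothesis $t^{1/2}\nabla v\in L^\infty(0,{\mathcal T};L^q(D))$.

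Your fallback is essentially what the paper does, and it is simpler than you suggest. The paper sets $w=v_1-v_2$, writes $v_1\cdot\nabla v_1-v_2\cdot\nabla v_2=w\cdot\nabla v_1+v_2\cdot\nabla w$, estimates each term in $L^{q/2}$ by H\"older (e.g.\ $\|w\cdot\nabla v_1\|_{q/2}\leq\|w\|_q\|\nabla v_1\|_q$), and applies \eqref{sm} with $j\in\{0,1\}$ from $L^{q/2}_\sigma$ to $L^q$; no Sobolev embedding into $L^\infty$ and no Volterra iteration are needed. With $H(t):=\sup_{0<\xi<t}(\|w(\xi)\|_q+\xi^{1/2}\|\nabla w(\xi)\|_q)$ one gets $H(t)\leq CE({\mathcal T})H(t)\,(t^{1/2-3/(2q)}+t^{1-3/(2q)})$, and since $q>3$ the prefactor tends to $0$ as $t\to0$, so $H(t_1)=0$ for some $t_1>0$ by direct absorption (here $H$ is nondecreasing). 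The continuation is then a stepwise absorption with a step length $\delta$ independent of the left endpoint, using that $\|\nabla v_1(\xi)\|_q$ is bounded away from $\xi=0$; finitely many steps cover $(0,{\mathcal T})$. If you adopt the fallback, spell out this two-stage structure rather than a ``Volterra system,'' and the proof is complete with only the estimates already available in Proposition \ref{so-far}.
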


\begin{proof}
We follow the argument by Fujita and Kato \cite[Theorem 3.1]{FK64}.
Suppose that $v_1$ and $v_2$ are solutions of the class \eqref{uni-cl} to \eqref{ns-int}. 
Then $w:=v_1-v_2$ obeys
\begin{align*}
w(t)=-\int_{0}^{t}T(t, \xi)P(w\cdot \nabla v_1+v_2\cdot \nabla w+b\cdot\nabla w+w\cdot  \nabla b)(\xi)\,d\xi,
\end{align*}
to which one applies \eqref{sm} to infer
\begin{align*}
\|w(t)\|_q+t^{\frac{1}{2}}\|\nabla w(t)\|_q
&\le CE(\mathcal T)H(t)\big(t^{\frac{1}{2}-\frac{3}{2q}}+t^{1-\frac{3}{2q}}\big)
\end{align*}
for $t\in (0,{\mathcal T})$, where we have set
\begin{align*}
&H(t):=\sup_{0< \xi< t}\left(\|w(\xi)\|_q+\xi^{\frac{1}{2}}\|\nabla w(\xi)\|_q\right),\\
&E({\mathcal T}):=\sup_{0<\xi <{\mathcal T}}\left(\xi^{\frac{1}{2}}\|\nabla v_1(\xi)\|_q+\|v_2(\xi)\|_q+\|b(\xi)\|_q+\|\nabla b(\xi)\|_q\right).
\end{align*}
Here and in what follows, we simply write $\sup$ instead of esssup.
We thus find $t_1\in (0,{\mathcal T})$ such that
$H(t_1)=0$ 
and, therefore, $w=0$ on $(0, t_1)$. 

At the next step, it turns out that
there exists $\delta>0$, independent of $s\in [t_1,{\mathcal T})$, with the following property: 
if $w=0$ on $(0, s)$, then $w=0$ on 
$(0, s+\delta)$ as long as $s+\delta<{\mathcal T}$, otherwise, $w=0$ on $(0,{\mathcal T})$. 
From this claim we are eventually led to $w=0$ on $(0,{\mathcal T})$ in finitely many steps.
In fact, assuming $w=0$ on $(0,s)$ and 
setting
\begin{align*}
&\widetilde{H}(t):=\sup_{s< \xi< t}\big(\|w(\xi)\|_q+\|\nabla w(\xi)\|_q\big), \\
&\widetilde{E}(\mathcal T):=\sup_{t_1< \xi<{\mathcal T}}\big(\|\nabla v_1(\xi)\|_q+\|v_2(\xi)\|_q+\|b(\xi)\|_q+\|\nabla b(\xi)\|_q\big),
\end{align*}
we easily observe
\[
\|w(t)\|_q+\|\nabla w(t)\|_q
\le C\widetilde{E}(\mathcal T)\widetilde{H}(t)\left((t-s)^{1-\frac{3}{2q}}+(t-s)^{\frac{1}{2}-\frac{3}{2q}}\right)
\]
for $t\in (s,{\mathcal T})$,
which implies $\widetilde H(s+\delta)=0$ for some $\delta>0$ independent of $s$. 
The proof is complete.
\end{proof}

We set
\begin{equation}
\begin{split}
&(\Psi v)(t)=\overline{v}(t)+(\Phi v)(t), \qquad
\overline{v}(t)=T(t,0)v_0+\int_0^tT(t,s)PF(s)\,ds, \\
&(\Phi v)(t)=-\int_0^tT(t,s)P(v\cdot\nabla v+b\cdot\nabla v+v\cdot\nabla b)(s)\,ds,
\end{split}
\label{sol-map}
\end{equation}
and intend to find a solution to \eqref{ns-int} through constructing a fixed point of the map $\Psi$
in a suitable closed ball of the Banach space
\[
X_{\mathcal T}=\{v\in C((0,{\mathcal T}];\, L^\infty(D));\; 
\|v\|_{X_{\mathcal T}}<\infty\}
\]
for some ${\mathcal T}\in (0,1]$, to be determined later,
endowed with norm
\begin{equation}
\|v\|_{X_{\mathcal T}}=\sup_{0<t\leq {\mathcal T}}t^{3/2q}\|v(t)\|_\infty+\sup_{0<t\leq {\mathcal T}}t^{1/2}\|\rho\nabla v(t)\|_q,
\label{funct-sp}
\end{equation}
where the weight function $\rho$ is given by \eqref{weight}.

Let us derive some estimates of $\overline{v}$, $\Phi v$ and $\Psi v$ for later use.
\begin{lemma}
Suppose $v_0\in L^q_{1,\sigma}(D)$ for some $q\in (3,\infty)$.
Let $r\in [q,\infty]$, $j\in \{0,1\}$ and 
${\mathcal T}\in (0,1]$. 
Then we have the following.
\begin{enumerate}
\item
There is a constant $C=C(m_0,q,r,\gamma,D)>0$ such that
\begin{equation}
\|\rho\nabla^j\overline{v}(t)\|_r\leq CK\,t^{-j/2-(3/q-3/r)/2} 
\label{top-est}
\end{equation}
for all $t\in (0,{\mathcal T}]$ with
\begin{equation}
K:=\|\rho v_0\|_q+m_0+m_0^2,
\label{K}
\end{equation}
where $m_0$ is given by \eqref{quan2}.
Moreover,
\begin{equation}
\lim_{t\to 0}t^{j/2+(3/q-3/r)/2}\|\rho\nabla^j\overline{v}(t)\|_r=0 
\label{top-beha}
\end{equation}
except the case $(j,r)=(0,q)$;
in that case, it holds that
\begin{equation}
\lim_{t\to 0}\|\rho\big(\overline{v}(t)-v_0\big)\|_q=0.
\label{top-ic}
\end{equation}

\item
There are constants
$c_k=c_k(m_0,q,r,\gamma,D)>0\;(k=1,\,2)$
such that
\begin{equation}
\|\rho\nabla^j (\Phi v)(t)\|_r\leq t^{-j/2-(3/q-3/r)/2}
\Big(c_1
{\mathcal T}^{1/2-3/2q}
\|v\|_{X_{\mathcal T}}^2
+c_2m_0
{\mathcal T}^{1/2}
\|v\|_{X_{\mathcal T}}\Big)
\label{Phi-est}
\end{equation}
for all $t\in (0,{\mathcal T}]$ and $v\in X_{\mathcal T}$.
Moreover,
\begin{equation}
\lim_{t\to 0}t^{j/2+(3/q-3/r)/2}\|\rho\nabla^j (\Phi v)(t)\|_r=0
\label{Phi-beha}
\end{equation}
for all $v\in X_{\mathcal T}$. 

\item
Given $\mu$ satisfying
\begin{equation}
\max\left\{\frac{3}{2q}+\frac{1}{2}-\frac{j}{2}-\beta,\; \gamma-\frac{j}{2}-\beta\right\}<\mu<1-\frac{j}{2}-\beta, \qquad
\beta:=\frac{3}{2}\left(\frac{1}{q}-\frac{1}{r}\right),
\label{Psi-hoe-exp}
\end{equation}
there is a constant $C=C(m_0,q,r,\mu,\gamma,D)>0$ such that
\begin{equation}
\|\nabla^j(\Psi v)(t)-\nabla^j(\Psi v)(\tau)\|_r
\leq C(t-\tau)^\mu\tau^{-j/2-\beta-\mu}\Big(K 
+{\mathcal T}^{1/2-3/2q}\|v\|_{X_{\mathcal T}}^2+m_0{\mathcal T}^{1/2}\|v\|_{X_{\mathcal T}}\Big) 
\label{Psi-hoe}
\end{equation}
for all $(t,\tau)$ with $0<\tau<t\leq {\mathcal T}$ and
$v\in X_{\mathcal T}$,
where $K$ is given by \eqref{K}.
\end{enumerate}
\label{lem-Phi}
\end{lemma}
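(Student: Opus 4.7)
The plan is to treat each of the three items as a direct application of the linear theory developed in Sections~\ref{sect-wei}--\ref{sect-duha} to the explicit formulas \eqref{sol-map}. For item~1, I would split $\overline{v}(t) = T(t,0)v_0 + \int_0^t T(t,s)PF_0(s)\,ds - \int_0^t T(t,s)P\partial_s b(s)\,ds$. The first term is controlled by Theorem~\ref{weighted-est} with $\alpha=1$, which yields $\|\rho\nabla^j T(t,0)v_0\|_r \le Ct^{-j/2-(3/q-3/r)/2}\|\rho v_0\|_q$, while the limit behaviors \eqref{top-beha} and \eqref{top-ic} come straight from \eqref{wei-small1} and \eqref{IC-1}. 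For the two inhomogeneous integrals I combine \eqref{wei-est1} (with $\alpha=1$, legitimate because $q>3>3/2$) with the bounds \eqref{force-est} on $PF_0$ (uniform in $s$, of order $m_0+m_0^2$) and on $P\partial_s b$ (of order $m_0 s^{-\gamma}$), then integrate using standard Beta-function estimates; since $\gamma<1$ the integrals produce extra factors $t^{1-\gamma}$ and $t$, so they vanish faster than the leading singularity and contribute a bound of the form $CK\,t^{-j/2-(3/q-3/r)/2}$ that also furnishes the required decay as $t\to 0$.

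For item~2, I would estimate the three coupling terms in the weighted $L^q$ norm using the definition \eqref{funct-sp} of $\|v\|_{X_\mathcal{T}}$ together with the $b$-estimates \eqref{lift-est}:
\begin{align*}
\|\rho(v\cdot\nabla v)(s)\|_q &\le \|v(s)\|_\infty\|\rho\nabla v(s)\|_q \le s^{-1/2-3/2q}\|v\|_{X_\mathcal{T}}^2, \\
\|\rho(b\cdot\nabla v)(s)\|_q &\le Cm_0\, s^{-1/2}\|v\|_{X_\mathcal{T}}, \\
\|\rho(v\cdot\nabla b)(s)\|_q &\le Cm_0\, s^{-3/2q}\|v\|_{X_\mathcal{T}}
\end{align*}
(in the last line $\nabla b$ is compactly supported, so $\rho$ contributes only a constant). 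Applying \eqref{wei-est1} together with \eqref{FK-wei} for $\alpha=1$ and integrating in $s$ produces factors $t^{1/2-3/2q}$, $t^{1/2}$, and $t^{1-3/2q}$ in front of $t^{-j/2-(3/q-3/r)/2}$. The Beta-integrals converge near $s=0$ precisely because $q>3$ makes $1/2+3/2q<1$, and since $\mathcal{T}\le 1$ with $q>3$ each of these factors is dominated by $\mathcal{T}^{1/2-3/2q}$ or $\mathcal{T}^{1/2}$, giving \eqref{Phi-est}; the same factors tend to zero as $t\to 0$, giving \eqref{Phi-beha}.

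Item~3 is obtained by combining Theorem~\ref{hoelder} applied to $T(\cdot,0)v_0$ with Corollary~\ref{duha-hoelder} applied to each of the Duhamel integrals that compose $\overline{v}+\Phi v$. For each integrand one reads off the singularity exponent $\kappa$ from items~1--2: $\kappa=\gamma$ for $P\partial_s b$, $\kappa=0$ for $PF_0$, $\kappa=1/2+3/2q$ for $P(v\cdot\nabla v)$, $\kappa=1/2$ for $P(b\cdot\nabla v)$, and $\kappa=3/2q$ for $P(v\cdot\nabla b)$. The admissible window for $\mu$ in Corollary~\ref{duha-hoelder} is $\kappa-j/2-\beta < \mu < 1-j/2-\beta$; intersecting over all five values of $\kappa$ yields exactly the range prescribed in \eqref{Psi-hoe-exp}, because $\max(\gamma, 1/2+3/2q)$ is the effective lower bound, while $\gamma<1$ and $q>3$ keep the window nonempty. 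Assembling the five resulting H\"older estimates and using $\mathcal{T}\le 1$ together with the same majorizations as in items~1--2 to collapse all time powers into $\mathcal{T}^{1/2-3/2q}$ or $\mathcal{T}^{1/2}$ then gives \eqref{Psi-hoe}.

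I do not anticipate a real obstacle: the work is essentially book-keeping, matching each integrand to the correct linear estimate and verifying the $\mu$-window. The one point needing attention is the repeated use of \eqref{FK-wei} with $\alpha=1$ on each forcing term after applying $P$; this is what forces the restriction $q>3/2$ in the linear results and is automatically satisfied here. Subject to that, the bound \eqref{Psi-hoe} and the uniform-in-$t$ estimates \eqref{top-est} and \eqref{Phi-est} follow mechanically, and the smallness assertions \eqref{top-beha}--\eqref{top-ic} and \eqref{Phi-beha} are read off the same integrals since every factor of $t$ appearing in front of the leading singularity has strictly positive exponent.
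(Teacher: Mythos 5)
Your proposal is correct and follows essentially the same route as the paper: item 1 via Theorem \ref{weighted-est} and \eqref{force-est} with $\alpha=1$, item 2 via the pointwise weighted bounds on the three coupling terms (using that $\nabla b$ is compactly supported) followed by Beta-function integration and the reduction ${\mathcal T}^{1-3/2q}\leq {\mathcal T}^{1/2}$, and item 3 via Theorem \ref{hoelder} and Corollary \ref{duha-hoelder} with the same list of exponents $\kappa$ (the paper merges $F_0$ and $\partial_t b$ into one term with $\kappa=\gamma$, which is equivalent to your split). No gaps.
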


\begin{proof}
We use several estimates of the evolution operator $T(t,s)$ with constants determined by \eqref{ever} with
${\mathcal T}=1,\, \theta=1-\gamma$ and $m=m_0/(1-\gamma)$, see \eqref{imp-hoe}--\eqref{quan2}.
By
\eqref{wei-est1}--\eqref{IC-1} and \eqref{force-est} with $\alpha=1$
we immediately see \eqref{top-est} and \eqref{top-beha}--\eqref{top-ic}.

We readily find from \eqref{FK-wei}, \eqref{wei-est1} and \eqref{lift-est} that
\begin{align*}
&\quad \|\rho\nabla^j (\Phi v)(t)\|_r   \\
&\leq C\int_{0}^{t}(t-\xi)^{-\frac{3}{2}\left(\frac{1}{q}-\frac{1}{r}\right)-\frac{j}{2}}
\big(\|v(\xi)\|_{\infty}\|\rho \nabla v(\xi)\|_q+\|b(\xi)\|_{\infty}\|\rho \nabla v(\xi)\|_q+\|v(\xi)\|_{\infty}\|\rho \nabla b(\xi)\|_q \big)\,d\xi \\
&\leq Ct^{-\frac{3}{2}\left(\frac{1}{q}-\frac{1}{r}\right)-\frac{j}{2}+\frac{1}{2}-\frac{3}{2q}}
\|v\|_{X_{\mathcal T}}^2
+Ct^{-\frac{3}{2}\left(\frac{1}{q}-\frac{1}{r}\right)-\frac{j}{2}}\,
\big(t^{\frac{1}{2}}+t^{1-\frac{3}{2q}}\big)m_0\|v\|_{X_{\mathcal T}}
\end{align*}
for all $t\in (0,{\mathcal T}]$,
which implies \eqref{Phi-est}--\eqref{Phi-beha}, where ${\mathcal T}^{1-3/2q}\leq {\mathcal T}^{1/2}$
for ${\mathcal T}\in (0,1]$ is taken into account for simplicity.

By \eqref{lift-est} and \eqref{force-est} we employ
Theorem \ref{hoelder} and Corollary \ref{duha-hoelder} with 
$\kappa=\gamma,\, \frac{3}{2q}+\frac{1}{2},\, \frac{1}{2}$ and $\frac{3}{2q}$, respectively,
to furnish \eqref{Psi-hoe}.
The proof is complete.
\end{proof}
\begin{proposition}
Suppose $v_0\in L^q_{1,\sigma}(D)$ for some $q\in (3,\infty)$.
There is ${\mathcal T}_0={\mathcal T}_0(\|\rho v_0\|_q, m_0, q, \gamma, D)\in (0,1]$ 
such that equation \eqref{ns-int}
admits a solution $v\in X_{{\mathcal T}_0}$ satisfying 
$\|v\|_{X_{{\mathcal T}_0}}\leq CK$ 
with some constant $C=C(m_0,q,\gamma,D)>0$, where $K$ is given by \eqref{K}.
\label{prop-exis}
\end{proposition}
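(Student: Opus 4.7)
The plan is to apply the contraction mapping principle to $\Psi$ in a suitable closed ball of $X_{\mathcal T}$, in the classical Fujita--Kato style; essentially all the needed estimates are already packaged in Lemma \ref{lem-Phi}. First, applying \eqref{top-est} with the two pairs $(j,r) = (0,\infty)$ and $(j,r) = (1,q)$ yields $\|\overline v\|_{X_{\mathcal T}} \le CK$, while \eqref{Phi-est} with the same two pairs bounds the bilinear/coupling term $\Phi v$ quadratically in $\|v\|_{X_{\mathcal T}}$ with prefactor $\mathcal T^{1/2-3/(2q)}$, which is positive precisely because $q > 3$. The continuity of $\overline v$ and $\Phi v$ into $L^\infty(D)$ on $(0,\mathcal T]$, required for actual membership in $X_{\mathcal T}$ (not just finiteness of the norm), is supplied by the H\"older estimate \eqref{Psi-hoe} with $j=0$, $r=\infty$; here the admissible range of $\mu$ is non-empty thanks to $q > 3$ (ensuring $1/2 < 1 - 3/(2q)$) and $\gamma < 1$.

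With this preparation I would fix $R := 2CK$ and work in $B_R := \{v \in X_{\mathcal T};\, \|v\|_{X_{\mathcal T}} \le R\}$. Combining \eqref{top-est} and \eqref{Phi-est} for $v \in B_R$ yields
\[
\|\Psi v\|_{X_{\mathcal T}} \le CK + \big(c_1 R\, \mathcal T^{1/2-3/(2q)} + c_2 m_0\, \mathcal T^{1/2}\big) R.
\]
I then choose $\mathcal T_0 \in (0,1]$, depending only on $\|\rho v_0\|_q$, $m_0$, $q$, $\gamma$, $D$ (through $K$ and the constants of Lemma \ref{lem-Phi}), small enough that the parenthesis is at most $1/2$; this forces $\Psi(B_R) \subset B_R$ on $(0,\mathcal T_0]$.

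For the contraction step, I exploit the bilinear structure
\begin{equation*}
\begin{split}
(\Phi v - \Phi w)(t) = -\int_0^t T(t,s) P\big(&(v-w)\cdot\nabla v + w \cdot \nabla(v-w) \\
&\qquad + b \cdot \nabla(v-w) + (v-w)\cdot\nabla b\big)(s)\,ds
\end{split}
\end{equation*}
and repeat the estimate leading to \eqref{Phi-est} term by term, producing
\[
\|\Psi v - \Psi w\|_{X_{\mathcal T}} \le \Big(c_1\big(\|v\|_{X_{\mathcal T}}+\|w\|_{X_{\mathcal T}}\big)\mathcal T^{1/2-3/(2q)} + c_2 m_0\, \mathcal T^{1/2}\Big)\|v - w\|_{X_{\mathcal T}}.
\]
Shrinking $\mathcal T_0$ further if necessary, the prefactor is at most $1/2$ on $B_R$, so $\Psi$ is a strict contraction on $B_R$ viewed as a closed subset of $X_{\mathcal T_0}$. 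The Banach fixed point theorem then produces a solution $v \in B_R$ of \eqref{ns-int}, and absorbing the factor $2$ into the constant gives $\|v\|_{X_{\mathcal T_0}} \le CK$ as asserted.

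No genuine obstacle arises; the argument is essentially mechanical once Lemma \ref{lem-Phi} is in place. The single subtle point to watch is that $\Psi$ actually lands in $X_{\mathcal T}$, i.e. the $C((0,\mathcal T];\, L^\infty(D))$ requirement, which is not contained in the pointwise bounds \eqref{top-est} and \eqref{Phi-est} themselves but must be extracted from the H\"older estimate \eqref{Psi-hoe}; this is the one place where both $q>3$ and $\gamma<1$ are simultaneously used.
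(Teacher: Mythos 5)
Your proposal is correct and follows essentially the same route as the paper: both establish $\Psi v\in X_{\mathcal T}$ via \eqref{top-est}, \eqref{Phi-est} with $(j,r)=(0,\infty)$ and $(1,q)$ together with the continuity supplied by \eqref{Psi-hoe}, and then run the Banach fixed point argument in the ball of radius $2NK$ with ${\mathcal T}_0$ chosen small via the smallness condition on ${\mathcal T}_0^{1/2-3/2q}K$ and $m_0{\mathcal T}_0^{1/2}$, exactly as in \eqref{into}--\eqref{short-t}. The only cosmetic difference is that you write out the bilinear difference for the contraction estimate explicitly, where the paper simply remarks it is the same computation as \eqref{Phi-est}.
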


\begin{proof}
Let ${\mathcal T}\in (0,1]$.
We know from
\eqref{Psi-hoe} that $\Psi v\in C((0,{\mathcal T}];\, L^\infty(D))$ for all $v\in X_{\mathcal T}$.
In view of \eqref{top-est} one can take a constant $N>0$ fulfilling
$\|\overline{v}\|_{X_1}\leq NK$ (with ${\mathcal T}=1$). 
It follows from \eqref{Phi-est} with $(j,r)=(0,\infty)$ and $(1,q)$ that $\Psi v\in X_{\mathcal T}$ subject to
\begin{equation}
\|\Psi v\|_{X_{\mathcal T}}
\leq NK+c_1^\prime{\mathcal T}^{1/2-3/2q}\|v\|_{X_{\mathcal T}}^2+c_2^\prime m_0{\mathcal T}^{1/2}\|v\|_{X_{\mathcal T}}
\label{into}
\end{equation}
for all $v\in X_{\mathcal T}$ with 
\[
c_k^\prime=c_k(m_0,q,\infty,\gamma,D)+c_k(m_0,q,q,\gamma,D) \qquad (k=1,\, 2),
\]
where $c_1$ and $c_2$ are the constants in \eqref{Phi-est}.
Exactly the same computation as in \eqref{Phi-est} gives
\begin{equation}
\quad \|\Psi v-\Psi w\|_{X_{\mathcal T}}
\leq\Big\{
c_1^\prime{\mathcal T}^{1/2-3/2q}
\big(\|v\|_{X_{\mathcal T}}+\|w\|_{X_{\mathcal T}}\big)
+c_2^\prime m_0{\mathcal T}^{1/2}\Big\}
\|v-w\|_{X_{\mathcal T}}
\label{contra}
\end{equation}
for all $v,\, w\in X_{\mathcal T}$
with the same constants $c_1^\prime$ and $c_2^\prime$ as in \eqref{into}.
We now choose ${\mathcal T}_0\in (0,1]$ such that
\begin{equation}
4c_1^\prime{\mathcal T}_0^{1/2-3/2q}NK+2c_2^\prime m_0{\mathcal T}_0^{1/2}\leq 1.
\label{short-t}
\end{equation}
By $X_{{\mathcal T}_0}(R)=\{v\in X_{{\mathcal T}_0};\, \|v\|_{{\mathcal T}_0}\leq R\}$ we denote the closed ball
centered at the origin with radius $R>0$ in $X_{{\mathcal T}_0}$.
Then \eqref{into}--\eqref{contra} with ${\mathcal T}={\mathcal T}_0$ implies that the map $\Psi$ 
is contractive from $X_{{\mathcal T}_0}(2NK)$ into itself.
The proof is complete.
\end{proof}

Let us close the paper with completion of the proof of Theorem \ref{ns-thm}.
\medskip

\noindent
{\it Proof of Theorem \ref{ns-thm}}.
Let $v(t)$ be the solution obtained in Proposition \ref{prop-exis}.
The behavior \eqref{ns-est1}
as well as the initial condition \eqref{ns-ic}
follows directly from \eqref{top-beha}--\eqref{top-ic} and \eqref{Phi-beha}.
Let $r\in [q,\infty]$ and $j\in \{0,1\}$.
Estimate $\|v\|_{X_{{\mathcal T}_0}}\leq 2NK$ together with \eqref{short-t} allows us to rewrite \eqref{Phi-est} and \eqref{Psi-hoe}
with ${\mathcal T}={\mathcal T}_0$ as
\begin{equation}
\|\rho\nabla^j (\Phi v)(t)\|_r\leq CK\,t^{-j/2-(3/q-3/r)/2}
\label{sol-est}
\end{equation}
for all $t\in (0,{\mathcal T}_0]$ and
\begin{equation}
\|\nabla^j v(t)-\nabla^jv(\tau)\|_r\leq CK(t-\tau)^\mu \tau^{-j/2-(3/q-3/r)/2-\mu}
\label{sol-hoe}
\end{equation}
for all $(t,\tau)$ with $0<\tau<t\leq {\mathcal T}_0$, where $\mu$ satisfies \eqref{Psi-hoe-exp}.
We immediately conclude \eqref{ns-est0} from \eqref{top-est} and \eqref{sol-est}. 

What remains to show is \eqref{ns-cl}
together with \eqref{ns-est2}.
As to $T(t,0)v_0$ with $v_0\in L^q_{1,\sigma}(D)$, 
the desired estimate is obvious due to the item 3 of Theorem \ref{strong-sol}.
Thanks to Theorem \ref{duha-strong}, 
it thus suffices to verify the conditions \eqref{ass2-force}--\eqref{ass3-force} 
for 
$g=P(v\cdot\nabla v),\, P(b\cdot\nabla v),\,P(v\cdot\nabla b),\, PF_0$ and $P\partial_tb$
with 
$\kappa=\frac{3}{2q}+\frac{1}{2},\, \frac{1}{2},\, \frac{3}{2q},\, 0$ and $\gamma$, respectively, where $s=0$.
In fact, from \eqref{FK-wei}, 
\eqref{ns-est0}, \eqref{sol-hoe} with $(j,r)=(0,\infty)$ and $(1,q)$ we find
\begin{equation*}
\|\rho^2P(v\cdot\nabla v)(t)\|_q\leq C\|\rho v(t)\|_\infty\|\rho\nabla v(t)\|_q\leq CK^2\,t^{-3/2q-1/2}
\end{equation*}
for $t\in (0,{\mathcal T}_0]$ and
\begin{equation*}
\|P(v\cdot\nabla v)(t)-P(v\cdot\nabla v)(\tau)\|_q
\leq CK^2(t-\tau)^\mu \tau^{-3/2q-1/2-\mu}
\end{equation*}
for 
$\frac{t}{2}<\tau<t\leq {\mathcal T}_0$
with 
$\mu$ satisfying
$\max\{\frac{3}{2q},\, \gamma-\frac{1}{2}\}<\mu<\frac{1}{2}$
by taking into account Remark \ref{rem-hoe-multi}.
Estimates of the other forces above are deduced in an analogous way
with the aid of \eqref{lift-est} and \eqref{force-est} with $\alpha=2$ as well.
We now use Theorem \ref{duha-strong} to obtain the corresponding regularity of $\Phi v$ to \eqref{ns-cl} along with
\[
\|\partial_t(\Phi v)(t)\|_q+\|\nabla^2(\Phi v)(t)\|_q
\leq CK^2\,t^{-3/2q-1/2}+Cm_0K\big(t^{-1/2}+t^{-3/2q}\big)
\]
for all $t\in (0,{\mathcal T}_0]$, yielding
\[
\sup_{0<t\leq {\mathcal T}_0}\big(t\|\partial_t(\Phi v)(t)\|_q+t\|\nabla^2(\Phi v)(t)\|_q\big)
\leq C{\mathcal T}_0^{1/2-3/2q}K^2+Cm_0\big({\mathcal T}_0^{1/2}+{\mathcal T}_0^{1-3/2q}\big)K
\leq CK
\]
on account of \eqref{short-t}.
The other estimate
\[
\sup_{0<t\leq {\mathcal T}_0}\big(t\|\partial_t\overline v(t)\|_q+t\|\nabla^2\overline v(t)\|_q\big)
\leq C(\|\rho v_0\|_q+m_1)
\]
is also observed readily, where $m_1$ is given by \eqref{quan2}.
The proof is complete.
\hfill
$\Box$

\bigskip
\noindent
{\it Acknowledgements}.
The first author is partially supported by the THERS Make New Standards Program for the Next Generation Researchers
through JPMJSP2125 from JST.
The second author is partially supported by the Grant-in-aid for Scientific Research 22K03372 from JSPS.



\end{document}